\newtheorem{theorem}{Theorem}[section]
\newtheorem{proposition}[theorem]{Proposition}
\newtheorem{lemma}[theorem]{Lemma}
\theoremstyle{definition}
\newtheorem{definition}[theorem]{Definition}
\newtheorem{example}[theorem]{Example}
\newtheorem{remark}[theorem]{Remark}
\numberwithin{equation}{section}
\begin{document}

\title{On spatial entropy and periodic entropies of Two-dimensional Shifts of Finite Type}

\author[Wen-Guei Hu]{Wen-Guei Hu}
\address[Wen-Guei Hu]{College of Mathematics, Sichuan University, Chengdu, 610064, China}
\email{wghu@scu.edu.cn}

\author[Guan-Yu Lai]{Guan-Yu Lai}
\address[Guan-Yu Lai]{Department of Applied Mathematics, National Yang Ming Chiao Tung University, Hsinchu 30010, Taiwan, ROC.}
\email{guanyu.am04g@g2.nctu.edu.tw}

\author[Song-Sun Lin]{Song-Sun Lin}
\address[Song-Sun Lin]{Department of Applied Mathematics, National Yang Ming Chiao Tung University, Hsinchu 30010, Taiwan, ROC.}
\email{sslin@nctu.edu.tw}

\subjclass[2020]{37B51, 37B40, 37B10}
\thanks{ Hu is partially supported by the National Natural Science Foundation of China (Grant 11601355). Lai is partially supported by the Ministry of Science and Technology, ROC (Contract MOST 111-2811-M-004-002-MY2). Lin is partially supported by the Ministry of Science and Technology, ROC (Contract MOST 106-2115-M-009-004-, 107-2115-M-009-006-, 108-2115-M-009-005- and 109-2115-M-009-004-).}

\date{}

\baselineskip=1.2\baselineskip

\begin{abstract}
Topological entropy or spatial entropy is a way to measure the complexity of shift spaces. This study investigates the relationships between the spatial entropy and the various periodic entropies which are computed by skew-coordinated systems $\gamma\in GL_2(\mathbb{Z})$ on two dimensional shifts of finite type. It is known that there are some aperiodic two dimensional shifts of finite types with positive spatial entropy without any periodic patterns. Hence, the spatial entropy is strictly greater than periodic entropy which is zero. On the other hand, when the shift spaces have some mixing properties then these two entropies are equal. In this paper, we show that some periodic mixing properties imply all of these entropies are equal. Indeed, for two dimensional shift of finite type $\Sigma(\mathcal{B})$ which is generated by an admissible local patterns $\mathcal{B}\subseteq \{0,1,...,r\}^{\mathbb{Z}_{m\times m}}, r\geq 1$ and $m\geq 2$. The horizontal periodic transition matrix $\mathbf{T}_m(\mathcal{B})$ is a 0-1 matrix which stores horizontal periodic patterns with period $m\geq 1$ and height 2 with the maximum eigenvalue $\rho(\mathbf{T}_m(\mathcal{B}))$. Sequence $\{\mathbf{T}_m(\mathcal{B})\}_{m=1}^\infty$ is called uniformly dominated by $\{\rho(\mathbf{T}_m(\mathcal{B}))\}_{m=1}^\infty$ if $|\mathbb{T}_m^k(\mathcal{B})| \leq c(m,k)\rho(\mathbf{T}_m(\mathcal{B}))^k$ with $\frac{1}{mk}\log c(m,k)$ tends to zero as $(m,k)$ tends to infinte. Uniform dominanting follows if the associated graphs of transition matrices $\{\mathbf{T}_m(\mathcal{B})\}_{m=1}^\infty$ have uniformly bounded diameters or the shift space $\Sigma(\mathcal{B})$ has periodic-block gluing properties. The explicit formulas for the transformations between the Hermite normal forms of the finite-index subgroups of $\mathbb{Z}^2$ in different $\gamma$ coordinated system are obtained. Therefore, it can be proved that uniformly dominant conditions (0.1) and (0.2) implies the spatial entropy of the shift space $\Sigma(\mathcal{B})$ is equal to the periodic entropy which is computed by $\gamma$-coordinated system for all $\gamma \in GL_2(\mathbb{Z}) $. In these case, the spatial entropy can also be computed as $\limsup_{q\to\infty}\log \rho({\bf T}_{\gamma_q,1})$ where $\gamma_q=\left[\begin{matrix}
    1&q\\
    0&1
\end{matrix}\right]$.
\end{abstract}
\maketitle

\section{Introduction}
Topological entropy or spatial entropy is a way to measure the complexity of shift spaces $X$ which has been studied extensively for many years, see \cite{BL2005 Patterns generation and transition matrices in multi-dimensional lattice models,BHL2019 Pattern generation problems arising in multiplicative integer systems,BLL2007 Patterns generation and spatial entropy in two dimensional lattice models,BLL2009 Mixing property and entropy conjugacy of Z2 subshift of finite type: a case study,Baxter1971 Eight-vertex model in lattice statistics,Baxter1982 Exactly Solved Models in Statistical Mechanics,Boyle2010 Multidimensional sofic shifts,Ceccherini-Silberstein2012 The myhill property,Ceccherini-Silberstein2012 On the density,CHLLY2022,Chow1995 Dynamics,Haggstrom1995 A subshift,Haggstrom1995 On the relation,Haggstrom1996 On phase transitions,Hochman2010 A characterization,Johnson2005 Factoring,Joyce1988 On the Hard-Hexagon,Juang2000 Cellular Neural,Lieb1967 Exact solution,Lieb1967 Exact solution of the F model1967,Lieb1967 Exact solution of the two-dimensional,Lieb1967 The residual entropy,Lieb1970 Ice ferro- and,Markley1981 Matrix subshifts,Meester2001 Higher-dimensional,Quas2000 Subshifts of multidimensional}. The periodic entropy only counts the subset $X_p$ of periodic patterns of $X$. For multi-dimensional symbolic dynamical systems $X$ (or $\mathbb{Z}^d,d\geq2$ actions). Apart from the rectangular-periodicity, the skew-periodicity can also be considered in each coordinated systems on $\mathbb{Z}^d$ \cite{BHLL2013 Zeta functions for two-dimensional shifts of finite type}. This study investigates the relationships between the spatial entropy and various periodic entropies in skew-coordinated systems.  

For simplicity, this study presents only the case of two-dimensional shifts of finite type. Let $\mathbb{Z}_{m\times m}$ be the $m\times m$ square lattice in $\mathbb{Z}^2$ and $\mathcal{S}$ be the finite set of symbols $\mathcal{S}=\left\{ 0,1,...,r-1\right\}$, $r\geq 2$. Denote by $\mathcal{S}^{\mathbb{Z}_{m\times m}}$ be the set of all local patterns on $\mathbb{Z}_{m\times m}$. A given subset $\mathcal{B}\subseteq \mathcal{S}^{\mathbb{Z}_{m\times m}}$ is called a basic set of admissible local patterns. $\Sigma \left(\mathcal{B} \right)$ is the set of all admissible (global) patterns generated by $\mathcal{B}$ on $\mathbb{Z}^2$. $\Sigma_{p}\left(\mathcal{B}\right)$ is the subset of all rectangular periodic patterns of $\Sigma \left( \mathcal{B}\right)$. The spatial entropy of $\Sigma \left(\mathcal{B} \right)$ is defined by
\begin{equation}
h(\mathcal{B})=\lim_{(n,k)\rightarrow \infty}\frac{1}{nk} \log \left|\Sigma_{n\times k}\left(\mathcal{B}\right) \right|,
\end{equation}
where $\left|\Sigma_{n\times k}\left(\mathcal{B}\right) \right|$ is the cardinal number of $\Sigma_{n\times k}\left(\mathcal{B}\right)$ which is the set of all admissible patterns generated by $\mathcal{B}$ on sublattice $\mathbb{Z}_{n\times k}$.

The skew-coordinated system can be represented by unimodular transformation $\gamma \in GL_2\left( \mathbb{Z}\right)$. The modular group is defined by 
\begin{equation*}
GL_2\left(\mathbb{Z}\right)=\left\{\left[\begin{matrix}
a&b\\
c&d
\end{matrix} \right]: a,b,c,d\in \mathbb{Z} \mbox{ and } \left| ad-bc\right|=1   \right\}.
\end{equation*}
$\gamma_0=\left[\begin{matrix}
1&0\\0&1
\end{matrix}\right]\in GL_2\left(\mathbb{Z}\right)$ is the standard rectangular system. The finite-index subgroup of $\mathbb{Z}^2$ can be parameterized in Hermite normal form \cite{Lind1998 A zeta function for,Mac Duffie1956 The theory of matrices} as 
\begin{equation*}
\mathcal{L}_2=\left\{  \left[  \begin{matrix}
n&\ell \\
0&k
\end{matrix}  \right]\mathbb{Z}^2 : n\geq 1, k\geq 1\mbox{ and }0\leq \ell \leq n-1
\right\}.
\end{equation*}
In the $\gamma$-system,
\begin{equation*}
\mathcal{L}_2=\left\{  \left[  \begin{matrix}
n&\ell \\
0&k
\end{matrix}  \right]_{\gamma}\mathbb{Z}^2 : n\geq 1, k\geq 1\mbox{ and }0\leq \ell \leq n-1
\right\}.
\end{equation*}
The spatial entropy $h_{\gamma}\left(\mathcal{B}\right)$ computed in $\gamma$ system is defined by 
\begin{equation}
h_{\gamma}(\mathcal{B})=\limsup_{(n,k)\rightarrow \infty}\frac{1}{nk} \log \left|\Sigma_{\gamma;n\times k}\left(\mathcal{B}\right) \right|.
\end{equation}
The periodic entropy $h_{\gamma,p}\left(\mathcal{B}\right)$ of basic set of admissible patterns $\mathcal{B}$ is defined by
\begin{equation}
h_{\gamma,p}\left(\mathcal{B}\right)=\limsup_{(n,k)\rightarrow\infty} \sup_{0\leq \ell \leq n-1} \frac{1}{nk} \log \Gamma_{\mathcal{B}}\left(  \left[  \begin{matrix}
n&\ell \\
0&k
\end{matrix}  \right]_{\gamma} \right).
\end{equation}
When $\gamma=\gamma_0$, $h_{\gamma_0,p}\left(\mathcal{B}\right)$ is also denoted by $h_{p}\left(\mathcal{B}\right)$ for simplicity. 

According to the result of Hu and Lin \cite{HL2016 On spatial entropy}, the spatial entropy $h_{\gamma}\left(\mathcal{B}\right)$ computed by any skew-system $\gamma$ is equal to the standard coordinated system $h\left( \mathcal{B}\right)$, i.e.,
\begin{equation}
h_{\gamma}\left(\mathcal{B}\right)=h\left(\mathcal{B}\right)
\end{equation}
for all $\gamma\in GL_2\left(\mathbb{Z}\right)$. 

We begin with the study by considering when $h\left(\mathcal{B}\right)=h_{p}\left(\mathcal{B}\right)$ in the standard coordinated system $\gamma_0=\left[\begin{matrix}
1&0\\0&1
\end{matrix}\right]$.

It is known when $r\geq 5$, there are some basic sets $\mathcal{B}^*\subseteq \mathcal{S}_r^{\mathbb{Z}_{2\times 2}}$ with $h\left(\mathcal{B}^*\right)>0$ and $\Sigma_p\left(\mathcal{B}^*\right)=\emptyset$. Therefore
\begin{equation}
h\left(\mathcal{B}^*\right)>h_p\left(\mathcal{B}^*\right),
\end{equation}
see \cite{Culik II1996 An aperiodic set,Kari1996 A small aperiodic} and related works \cite{Berger1966 The undecidability,CHL2020 Nonemptiness problems,Penrose1974 The role of aesthetics,Wang1961 Proving theorems}. On the other hand, when $\Sigma\left(\mathcal{B}\right)$ has some mixing property then
\begin{equation}\label{1-6}
h\left(\mathcal{B}\right)=h_p\left(\mathcal{B}\right),
\end{equation}
see \cite{Boyle2010 Multidimensional sofic shifts,Ward1994 Automorphisms} and related works \cite{Kass2013 A note on the,Lightwood2003 Morphisms from,Lightwood2004 Morphisms from non-periodic Z2 subshifts II,Markley1979 Maximal measures,Markley1981 Matrix subshifts,Meester2001 Higher-dimensional,Quas2000 Subshifts of multidimensional,Ruelle1973 Statistical mechanics,Ruelle1978 Thermodynamic Formalism,Ruelle1992 Thermodynamic formalism for maps satisfying}. In this paper, we introduce a kind of periodic-mixing property to ensure (\ref{1-6}) hold. In the following, we briefly introduce the relevant notations and the notation of the periodic-mixing property.

Based on our earlier studies \cite{BHLL2013 Zeta functions for two-dimensional shifts of finite type,BLL2007 Patterns generation and spatial entropy in two dimensional lattice models}, we introduce the horizontal-periodic transition matrix ${\bf T}_m\left(\mathcal{B}\right)$ for all $m\geq 1$. ${\bf T}_m\left(\mathcal{B}\right)$ stores all horizontal periodic patterns which are generated by $\mathcal{B}$ on $\mathbb{Z}_{m\times 2}, m\geq 1$. Denoted by $\rho\left( {\bf T}_m\left(\mathcal{B}\right)\right)$ to be the maximum eigenvalue of matrix ${\bf T}_m\left(\mathcal{B}\right)$. The sequence of horizontal periodic transition matrices $\left\{ {\bf T}_m\left(\mathcal{B}\right) \right\}_{m=1}^\infty$ is called uniformly dominated by $\left\{ \rho\left({\bf T}_m\left(\mathcal{B}\right)\right) \right\}_{m=1}^\infty$ if 
\begin{equation}\label{1-7}
\left|{\bf T}_m^k\left(\mathcal{B}\right) \right|\leq c(m,k)\rho\left({\bf T}_m\left(\mathcal{B}\right)\right)^k
\end{equation}
with
\begin{equation}\label{1-8}
\lim_{(m,k)\to\infty}\frac{1}{mk}\log c(m,k)=0.
\end{equation}

Our first result is the following theorem.
\begin{theorem}\label{thm0-1}
	If (\ref{1-7}) and (\ref{1-8}) hold, then 
	\begin{equation*}
	h\left(\mathcal{B}\right)=h_p\left(\mathcal{B}\right)=h_*\left(\mathcal{B}\right),
	\end{equation*}
where 	
\begin{equation}
h_*\left(\mathcal{B}\right)=\limsup_{m\to \infty}\frac{1}{m}\log \rho\left({\bf T}_m\left(\mathcal{B}\right)\right).
\end{equation}
\end{theorem}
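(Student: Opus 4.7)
My plan is to derive Theorem~\ref{thm0-1} from the cyclic chain of inequalities $h_*(\mathcal{B})\leq h_p(\mathcal{B})\leq h(\mathcal{B})\leq h_*(\mathcal{B})$, of which the first two hold unconditionally and only the last requires the uniform dominance hypotheses (\ref{1-7})--(\ref{1-8}).

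The inequality $h_p(\mathcal{B})\leq h(\mathcal{B})$ is almost tautological: a doubly-periodic admissible configuration is determined by its values on a fundamental domain of area $nk$, which is itself an admissible $n\times k$ pattern, so $\Gamma_{\mathcal{B}}\bigl(\bigl[\begin{smallmatrix}n&\ell\\0&k\end{smallmatrix}\bigr]\bigr)\leq|\Sigma_{n\times k}(\mathcal{B})|$; dividing by $nk$, taking logarithms, supremum over $\ell$, and $\limsup$ in $(n,k)$ gives the inequality. For $h_*(\mathcal{B})\leq h_p(\mathcal{B})$, the key observation is that the trace $\mathrm{tr}(\mathbf{T}_m^k(\mathcal{B}))$ enumerates the doubly-periodic admissible patterns with horizontal period $m$, vertical period $k$, and skew $\ell=0$, so $\mathrm{tr}(\mathbf{T}_m^k)\leq\Gamma_{\mathcal{B}}\bigl(\bigl[\begin{smallmatrix}m&0\\0&k\end{smallmatrix}\bigr]\bigr)$. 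Since $\mathbf{T}_m$ has nonnegative entries, Perron--Frobenius gives $\lim_k \frac{1}{k}\log\mathrm{tr}(\mathbf{T}_m^k)=\log\rho(\mathbf{T}_m)$; dividing by $m$ and passing to $\limsup$ in $m$ yields $h_p(\mathcal{B})\geq h_*(\mathcal{B})$.

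The substantive content of the theorem is the reverse inequality $h(\mathcal{B})\leq h_*(\mathcal{B})$, for which the uniform dominance hypothesis is designed. The strategy is to exhibit a combinatorial comparison of the form
\[
|\Sigma_{n\times k}(\mathcal{B})|\ \leq\ \kappa(n,k)\,|\mathbf{T}_{m(n)}^{k}(\mathcal{B})|,
\]
with $m(n)=n+o(n)$ and $(nk)^{-1}\log\kappa(n,k)\to 0$, typically by extending each admissible $n\times k$ pattern to a horizontally periodic pattern of width $m(n)$ at the cost of appending an $o(n)$-wide ``closure strip''. Granted this comparison, hypothesis (\ref{1-7}) yields $|\mathbf{T}_{m(n)}^{k}|\leq c(m(n),k)\rho(\mathbf{T}_{m(n)})^k$, whence
\[
\tfrac{1}{nk}\log|\Sigma_{n\times k}(\mathcal{B})|\ \leq\ \tfrac{1}{nk}\log\kappa(n,k)+\tfrac{1}{nk}\log c(m(n),k)+\tfrac{m(n)}{n}\cdot\tfrac{1}{m(n)}\log\rho(\mathbf{T}_{m(n)}),
\]
and passing to $\limsup$ as $(n,k)\to\infty$ bounds the left side by $h_*(\mathcal{B})$, since $m(n)/n\to1$ and both correction terms vanish by (\ref{1-8}) and the choice of $\kappa$.

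The central obstacle is producing the closure strip, since for a generic SFT not every admissible $n\times k$ pattern extends to a horizontally periodic one. The uniform dominance condition is precisely what provides enough multiplicative connectivity among the $m$-periodic row states of $\mathbf{T}_m$ to guarantee that such closures can be found---or at any rate, to bound the exceptional non-closable patterns by a factor absorbable into $\kappa$. I expect this step to be the most technical, likely combining a Perron--Frobenius-type positivity analysis of the matrices $\mathbf{T}_m$ with the subexponential overhead granted by (\ref{1-8}).
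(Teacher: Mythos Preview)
Your cyclic-inequality framework and the treatment of $h_*(\mathcal{B})\leq h_p(\mathcal{B})\leq h(\mathcal{B})$ are fine and match the paper's Lemma~\ref{lemma 4.6}. The problem is the third inequality, where your ``closure strip'' plan is both unnecessary and, under the stated hypotheses, unjustified.

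Your plan asks for a finite-$n$ comparison $|\Sigma_{n\times k}(\mathcal{B})|\leq \kappa(n,k)\,|\mathbf{T}_{m(n)}^{k}|$ produced by extending each admissible $n\times k$ pattern to a horizontally periodic one. But hypotheses (\ref{1-7})--(\ref{1-8}) give only an \emph{upper} bound $|\mathbf{T}_m^k|\leq c(m,k)\rho(\mathbf{T}_m)^k$; they say nothing about connectivity of the periodic-row graph, and in particular do not guarantee that an arbitrary admissible block can be closed up horizontally with $o(n)$ extra columns. You have conflated uniform dominance with uniform connectedness (Definition~\ref{def3.12}); the latter implies the former (Theorem~\ref{thm0-2}) but not conversely, and Theorem~\ref{thm0-1} assumes only dominance. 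So the step you flag as ``the central obstacle'' is not merely technical: as stated, it does not follow from the hypotheses.

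The paper sidesteps the closure problem entirely via the combinatorial identity of Lemma~\ref{lemma 4.5},
\[
\mathrm{tr}\bigl(\mathbf{H}_n^m(\mathcal{B})\bigr)=\bigl|\mathbf{T}_m^{n-1}(\mathcal{B})\bigr|,
\]
both sides counting horizontally $m$-periodic $\mathcal{B}$-admissible patterns of height $n$. Combined with $h(\mathcal{B})=\limsup_n \frac{1}{n}\log\rho(\mathbf{H}_n)$ and $\log\rho(\mathbf{H}_n)=\limsup_m \frac{1}{m}\log\mathrm{tr}(\mathbf{H}_n^m)$, this yields (Lemma~\ref{lemma 4.6-1})
\[
h(\mathcal{B})=\limsup_{(m,n)\to\infty}\frac{1}{mn}\log\bigl|\mathbf{T}_m^n(\mathcal{B})\bigr|,
\]
valid for \emph{every} $\mathcal{B}$, with no extension argument needed. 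Now (\ref{1-7})--(\ref{1-8}) apply directly:
\[
h(\mathcal{B})\leq \limsup_{(m,n)\to\infty}\frac{1}{mn}\log c(m,n)+\limsup_{m\to\infty}\frac{1}{m}\log\rho(\mathbf{T}_m)=h_*(\mathcal{B}).
\]
The point is that you never compare $|\Sigma_{n\times k}|$ with $|\mathbf{T}_m^k|$ at fixed $(n,k)$; you pass through $\rho(\mathbf{H}_n)$ and the trace identity, which relates the two growth rates asymptotically without ever constructing a periodic extension of a given pattern.
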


Conditions (\ref{1-7}) and (\ref{1-8}) can be obtained by checking the following connectedness conditions.

$\left\{ {\bf T}_m\left(\mathcal{B}\right) \right\}_{m=1}^\infty$ is called uniformly connected if $ {\bf T}_m\left(\mathcal{B}\right)$ is irreducible for all $m\geq 1$ and there is a positive integer $K$ such that for any $m\geq 1$ and any indices pair $(i,j), 1\leq i,j \leq r^m$ where there is $1\leq k \leq K$ such that 
\begin{equation}
 \left({\bf T}_m^k\left(\mathcal{B}\right)\right)_{i,j}\geq 1.
\end{equation}

Uniform connectiveness of $\left\{ {\bf T}_m\left(\mathcal{B}\right) \right\}_{m=1}^\infty$ is equivalent to the existence of finite upper bound $K$ of the diameters of associated graphs of $\left\{ {\bf T}_m\left(\mathcal{B}\right) \right\}_{m=1}^\infty$ , see Section \ref{section3} for details. By Perron-Frobenius Theorem, uniform connectedness implies uniform domination as follows:
\begin{theorem}\label{thm0-2}
	If $\left\{ {\bf T}_m\left(\mathcal{B}\right) \right\}_{m=1}^\infty$ is uniformly connected then $\left\{ {\bf T}_m\left(\mathcal{B}\right) \right\}_{m=1}^\infty$ is uniformly dominated by $\left\{ \rho\left({\bf T}_m\left(\mathcal{B}\right)\right) \right\}_{m=1}^\infty$.
\end{theorem}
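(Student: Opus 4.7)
The plan is to exploit Perron--Frobenius for each irreducible matrix $\mathbf{T}_m(\mathcal{B})$ and translate uniform connectedness into a uniform bound on the ratio $\max_i v_i/\min_j v_j$ of its Perron eigenvector. Fix $m$ and write $T=\mathbf{T}_m(\mathcal{B})$ with spectral radius $\rho=\rho(T)$. Since $T$ is irreducible and non-negative, take the strictly positive Perron eigenvector $v \in \mathbb{R}^{r^m}_{>0}$ with $Tv=\rho v$, so that $T^k v=\rho^k v$ for every $k\geq 1$. A direct entry-wise comparison,
\[
(T^k)_{ij}\, v_j \;\leq\; \sum_\ell (T^k)_{i\ell}\, v_\ell \;=\; \rho^k v_i,
\]
yields $(T^k)_{ij}\leq \rho^k v_i/v_j$. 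Summing over the $r^m\times r^m$ index set then gives
\[
|T^k| \;\leq\; r^{2m}\, \rho^k\,\frac{\max_i v_i}{\min_j v_j}.
\]

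The decisive step is a uniform control of the Perron ratio $\max_i v_i/\min_j v_j$. I would set $S:=T+T^2+\cdots+T^K$; the uniform connectedness hypothesis guarantees $S_{ij}\geq 1$ for all $i,j$ (with the \emph{same} $K$ for every $m$), while $Sv=(\rho+\rho^2+\cdots+\rho^K)v$. Since $(Sv)_j=\sum_i S_{ji}v_i\geq \sum_i v_i\geq \max_i v_i$, dividing by the eigenvalue produces
\[
\frac{\max_i v_i}{\min_j v_j}\;\leq\; \rho+\rho^2+\cdots+\rho^K\;\leq\; K\max(1,\rho)^K.
\]
The crucial observation is that the exponent $K$ does not depend on $m$.

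Combining the two estimates yields $|\mathbf{T}_m^k(\mathcal{B})|\leq c(m,k)\,\rho(\mathbf{T}_m(\mathcal{B}))^k$ with $c(m,k)=K\,r^{2m}\max(1,\rho(\mathbf{T}_m(\mathcal{B})))^K$. It remains to check (\ref{1-8}). Using the trivial bound $\rho(\mathbf{T}_m(\mathcal{B}))\leq r^{2m}$ (coming from the $1$-norm of the $r^m\times r^m$ $0/1$ matrix $T$), we have $\log\max(1,\rho)\leq 2m\log r$, and hence
\[
\frac{1}{mk}\log c(m,k)\;\leq\;\frac{\log K}{mk}+\frac{2\log r}{k}+\frac{2K\log r}{k}\;\xrightarrow[(m,k)\to\infty]{}\;0.
\]
I expect the main obstacle to be the uniform Perron-ratio bound in the middle step: without the $m$-independence of $K$ afforded by uniform connectedness, the factor $\max(1,\rho_m)^K$ would scale with $m$ and the logarithmic averaging in (\ref{1-8}) would fail. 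Once the positivity of $S$ with an $m$-independent $K$ is secured, the rest is routine bookkeeping.
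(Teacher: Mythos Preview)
Your proof is correct and follows essentially the same route as the paper: both use the Perron eigenvector $v$ of the irreducible matrix $\mathbf{T}_m$ to bound $|\mathbf{T}_m^k|$ by $\rho_m^k$ times a factor involving $\max_i v_i/\min_j v_j$, and then convert uniform connectedness into an $m$-independent bound on that Perron ratio. The only cosmetic differences are that the paper controls the ratio directly from a single power, obtaining $\max_i v_i/\min_j v_j\le \rho_m^{K}$ (using $\rho_m\ge 1$ for irreducible integral matrices), and gets the slightly tighter summation $|\mathbf{T}_m^k|\le r^m(\max v/\min v)\rho_m^k$, whereas you pass through $S=\sum_{j\le K}\mathbf{T}_m^{j}$ and pick up an extra harmless factor of $r^m$; neither difference affects the verification of~(\ref{1-8}).
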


Furthermore, the uniform connectedness can be expressed in terms of periodic-mixing conditions as follows.

The shift space $\Sigma \left(\mathcal{B}\right)$ is called horizontal-periodic block gluing if there is an integer $K\geq 1$ such that for any pair of $m$-periodic patterns $\overline{U}_m\in \Sigma_{\infty\times k_1}\left(\mathcal{B}\right)$ and $\overline{V}_m\in \Sigma_{\infty\times k_2}\left(\mathcal{B}\right)$; then $\left( \overline{U}_m,\overline{V}_m\right)$ can be glued together when vertical distance $k\geq K$ for all $m\geq 1$, see the details of Definition \ref{def3.21}. Then, we have the following result for horizontal-periodic block gluing and uniform connectedness.
\begin{theorem}\label{thm0-3}
		If $\left\{  {\bf T}_m \left( \mathcal{B}\right) \right\}_{m=1}^{\infty}$ is irreducible and $\Sigma \left(\mathcal{B}\right)$ is horizontal-periodic block gluing, then $\left\{  {\bf T}_m \left( \mathcal{B}\right) \right\}_{m=1}^{\infty}$ is uniform connected. Furthermore if $\left\{  {\bf T}_m \left( \mathcal{B}\right) \right\}_{m=1}^{\infty}$ is uniform connected and for any $m\geq 1$, there exists an index $i_m$ such that 
	\begin{equation}
	\left( {\bf T}_m\left( \mathcal{B}\right)\right)_{(i_m,i_m)}=1, 
	\end{equation}
	then $\Sigma\left(\mathcal{B}\right)$ is horizontal-periodic block gluing.
\end{theorem}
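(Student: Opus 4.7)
The proof rests on the standard dictionary between admissible $m$-periodic vertical strips and walks in the graph $G_m$ associated to $\mathbf{T}_m(\mathcal{B})$: the vertices of $G_m$ are the $m$-periodic row patterns $u_i$ indexing $\mathbf{T}_m(\mathcal{B})$, and $(\mathbf{T}_m^k(\mathcal{B}))_{i,j}\geq 1$ iff there is an admissible $m$-periodic strip of height $k+1$ joining $u_i$ to $u_j$. Because $\mathcal{B}$ imposes no constraint on any single row (templates have height $\geq 2$), each vertex is by itself a legitimate element of $\Sigma_{\infty\times 1}(\mathcal{B})$; irreducibility of $\mathbf{T}_m(\mathcal{B})$ is used only to certify that every index carries both in- and out-edges, excluding degenerate rows. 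I plan to recast both implications as routing problems in $G_m$.

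\textbf{First implication.} Let $K$ denote the horizontal-periodic block gluing constant. For any $m$ and any indices $i,j$, apply the gluing property to $\overline{U}_m:=u_i$ and $\overline{V}_m:=u_j$, both viewed as single-row patches in $\Sigma_{\infty\times 1}(\mathcal{B})$, at vertical distance $K$. The resulting admissible $m$-periodic filling yields an admissible strip of height $K+2$ with $u_i$ on one side and $u_j$ on the other, hence a walk of length $K+1$ from $i$ to $j$ in $G_m$. Therefore $(\mathbf{T}_m^{K+1}(\mathcal{B}))_{i,j}\geq 1$ with $K+1$ independent of $(m,i,j)$, which is uniform connectedness with constant $K+1$.

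\textbf{Second implication.} Let $K$ be the uniform connectedness constant and $i_m$ the self-loop index. Given $\overline{U}_m$ with top row $u_i$ and $\overline{V}_m$ with bottom row $u_j$, uniform connectedness supplies $a,b\in\{1,\dots,K\}$ and walks $i\to i_m$ of length $a$, $i_m\to j$ of length $b$. Concatenating these walks with $t\geq 0$ iterations of the self-loop at $i_m$ produces a walk of length $a+t+b$ from $i$ to $j$, so
\[
(\mathbf{T}_m^{a+t+b}(\mathcal{B}))_{i,j}\geq 1 \qquad \text{for every } t\geq 0.
\]
Every walk length $\ell\geq a+b$, and in particular every $\ell\geq 2K$, is realised. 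Translating each such walk back into an admissible $m$-periodic filling sandwiched between $\overline{U}_m$ and $\overline{V}_m$ produces a valid filling for every vertical distance $k\geq 2K-1$, which is horizontal-periodic block gluing with uniform constant $2K-1$.

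\textbf{Main obstacle.} The arithmetic is easy; the real care is bookkeeping. I have to pin down the precise convention of ``vertical distance'' in Definition \ref{def3.21}, verify that walks in $G_m$ correspond faithfully to admissible $m$-periodic strips (including that the boundary rows of $\overline{U}_m$ and $\overline{V}_m$ are genuine indices of $\mathbf{T}_m(\mathcal{B})$), and check that the self-loop at $i_m$ yields $(\mathbf{T}_m^t(\mathcal{B}))_{i_m,i_m}\geq 1$ for every $t\geq 0$, so that walks can be padded to arbitrary length. Once these identifications are fixed, each implication is essentially a one-step translation, with irreducibility supporting the first direction and the self-loop hypothesis supporting the second.
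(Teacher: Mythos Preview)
Your proposal is correct and follows essentially the same route as the paper: both directions are proved by the dictionary between $m$-periodic admissible strips and walks in the graph of $\mathbf{T}_m(\mathcal{B})$, with the second implication routed through the self-loop vertex $i_m$ and padded to arbitrary length. The only differences are cosmetic off-by-one discrepancies in the constants (the paper obtains $K$ and $2K+1$ where you obtain $K+1$ and $2K-1$), stemming exactly from the convention issue you flag; under the paper's identification, vertical distance $k$ corresponds to $(\mathbf{T}_m^k)_{i,j}\geq 1$, i.e.\ a walk of length $k$ rather than $k+1$.
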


To study the problems of $h\left(\mathcal{B}\right)=h_{\gamma,p}\left(\mathcal{B}\right)$, we need to the study the transformation of Hermite normal forms $\left[\begin{matrix}
M&L\\0&K
\end{matrix}\right]_{\gamma}$ between $\left[\begin{matrix}
m&\ell\\0&k
\end{matrix}\right]_{\gamma_0}$. Indeed, we have the following result.
\begin{theorem}\label{thm0-4}
		Given $\gamma=\left[\begin{matrix}
	a&b\\
	c&d
	\end{matrix}\right]\in GL_2(\mathbb{Z})$, $\Delta=\det\gamma$,  with $b\neq 0$. Let $k=\gcd(bM,bL+dK)$ and $k=b_1(bM)+b_2(bL+dK)$. Let 
	\begin{equation*}
	bM=m'k
	\end{equation*}
	and
	\begin{equation*}
	bL+dK=\ell'k
	\end{equation*}
	with
	\begin{equation*}
	b_1m'+b_2\ell'=1.
	\end{equation*}
	Then
	\begin{equation}\label{1-12}
	\left[\begin{matrix}
	\frac{m'k}{b}& \frac{-dK+\ell'k}{b}    \\
	0&K
	\end{matrix}\right]_{\gamma}\cong\left[\begin{matrix}
	\frac{m'K}{b}&\frac{ak-\Delta b_2K}{b}\\
	0&k
	\end{matrix}\right]_{\gamma_0},
	\end{equation}
	where all entries in (\ref{1-12}) are integers.
\end{theorem}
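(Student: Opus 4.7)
My plan is to view both sides of (\ref{1-12}) as two descriptions of the same sublattice of $\mathbb{Z}^2$---one in the $\gamma$-system, one in the standard $\gamma_0$-system---and to exhibit an explicit element of $GL_2(\mathbb{Z})$ carrying one generator matrix onto the other. Set $M = m'k/b$ and $L = (-dK + \ell'k)/b$ (these are automatically integers, since by the defining relations $m'k/b = M$ and $-dK + \ell'k = bL$). In standard coordinates, the sublattice corresponding to the left-hand side of (\ref{1-12}) is the column span of
\[
\gamma^T \begin{pmatrix} M & L \\ 0 & K \end{pmatrix} = \begin{pmatrix} aM & aL + cK \\ m'k & \ell'k \end{pmatrix}.
\]
The decisive feature is that the bottom row is $(m'k, \ell'k)$ with $\gcd(m',\ell')=1$ (forced by $k = \gcd(bM, bL+dK)$), so unimodular column operations can reduce it to $(0,k)$.

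Using the Bezout identity $b_1 m' + b_2 \ell' = 1$, I would introduce
\[
V = \begin{pmatrix} \ell' & b_1 \\ -m' & b_2 \end{pmatrix} \in GL_2(\mathbb{Z}), \qquad \det V = b_1 m' + b_2 \ell' = 1,
\]
which by direct computation satisfies $(m'k, \ell'k)\,V = (0, k)$. A short calculation for the top row then gives
\[
\gamma^T \begin{pmatrix} M & L \\ 0 & K \end{pmatrix} V = \begin{pmatrix} m'K\Delta/b & (ak - \Delta b_2 K)/b \\ 0 & k \end{pmatrix},
\]
where the $(1,1)$-entry uses the identity $M\ell' - Lm' = m'dK/b$ (immediate from $M = m'k/b$ and $L = (\ell'k - dK)/b$) together with $\Delta = ad - bc$, and the $(1,2)$-entry uses $m'b_1 + \ell'b_2 = 1$ together with $\Delta = ad - bc$. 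When $\Delta = +1$ this already equals the right-hand side of (\ref{1-12}); when $\Delta = -1$, one further unimodular negation of the first column (via $\mathrm{diag}(-1,1) \in GL_2(\mathbb{Z})$) delivers the claimed matrix. Either way the two sublattices coincide, establishing the congruence.

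Integrality of every entry in (\ref{1-12}) then comes for free: the displayed product $\gamma^T H V$ is built from three integer matrices, so its entries $m'K\Delta/b$ and $(ak - \Delta b_2 K)/b$ must be integers, and since $|\Delta|=1$ this forces $m'K/b \in \mathbb{Z}$; the left-hand entries $m'k/b = M$ and $(-dK + \ell'k)/b = L$ are integers by construction. The only real obstacle is bookkeeping—tracking the sign of $\Delta = \pm 1$ cleanly and confirming that the particular $V$ has unit determinant and integer entries—and once the $\Delta$-sign is absorbed into the final unimodular factor, the remainder is a direct computation driven entirely by the Bezout relation.
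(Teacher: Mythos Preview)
Your proof is correct and follows essentially the same route as the paper: both pass to standard coordinates via left-multiplication by $\gamma^{t}$ and then right-multiply by an explicit unimodular matrix built from the Bezout data $(b_1,b_2,m',\ell')$ to reach Hermite normal form. The only cosmetic difference is that the paper absorbs the sign $\Delta$ directly into its unimodular factor $\overline{U}_{\gamma}=\begin{pmatrix}\Delta\ell' & b_1\\ -\Delta m' & b_2\end{pmatrix}$ (so the $(1,1)$-entry comes out as $m'K/b$ immediately via $\Delta^{2}=1$), whereas you use $V=\begin{pmatrix}\ell' & b_1\\ -m' & b_2\end{pmatrix}$ and apply $\mathrm{diag}(-1,1)$ afterward when $\Delta=-1$.
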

Note that two integer $2\times 2$ matrices $A$ and $A'$ is called equivalent and is denoted by $A'\cong A$ if
\begin{equation}
A'\mathbb{Z}^2=A\mathbb{Z}^2.
\end{equation}

Then we have the following result.
\begin{theorem}\label{theorem0-5}
		If $h(\mathcal{B})=h_*(\mathcal{B})$, then $h_{\gamma,p}(\mathcal{B})=h(\mathcal{B})$ for all $\gamma\in GL_2(\mathbb{Z})$. In particular, if $\{ {\bf T}_m(\mathcal{B})  \}_{m=1}^{\infty}$ is uniformly dominated by $\{ \rho\left({\bf T}_m\left(\mathcal{B}\right)\right) \}_{m=1}^{\infty}$, then $h_{\gamma,p}(\mathcal{B})=h(\mathcal{B})$ for all $\gamma\in GL_2(\mathbb{Z})$.
\end{theorem}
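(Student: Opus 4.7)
The plan is to establish the two inequalities $h_{\gamma,p}(\mathcal{B}) \leq h(\mathcal{B})$ and $h_*(\mathcal{B}) \leq h_{\gamma,p}(\mathcal{B})$; combined with the hypothesis $h(\mathcal{B})=h_*(\mathcal{B})$, these will force the equality $h_{\gamma,p}(\mathcal{B}) = h(\mathcal{B})$. The ``in particular'' clause then follows immediately from Theorem~\ref{thm0-1}, which supplies $h(\mathcal{B})=h_*(\mathcal{B})$ under the uniform-domination assumption.

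The upper bound is the easy direction: any pattern periodic under $\Lambda = \left[\begin{smallmatrix} n & \ell \\ 0 & k \end{smallmatrix}\right]_\gamma\mathbb{Z}^2$ is determined by its restriction to the fundamental $n\times k$ rectangle in $\gamma$-coordinates, and that restriction lies in $\Sigma_{\gamma;n\times k}(\mathcal{B})$. This gives $\Gamma_\mathcal{B}(\Lambda) \leq |\Sigma_{\gamma;n\times k}(\mathcal{B})|$, and passing to $\limsup_{(n,k)\to\infty}$ yields $h_{\gamma,p}(\mathcal{B}) \leq h_\gamma(\mathcal{B}) = h(\mathcal{B})$ by the Hu--Lin identity~(\ref{1-4}).

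For the lower bound I would exhibit $\gamma$-periodic patterns realising the spectral growth $\rho({\bf T}_m)$. Writing $\gamma=\left[\begin{smallmatrix} a&b\\c&d\end{smallmatrix}\right]$ and fixing $m$, the lattice $\Lambda_{m,k}=\left[\begin{smallmatrix} m & 0 \\ 0 & k \end{smallmatrix}\right]_{\gamma_0}\mathbb{Z}^2$ satisfies $\Gamma_\mathcal{B}(\Lambda_{m,k})=\operatorname{tr}({\bf T}_m^k)$, and Perron--Frobenius applied to the dominant irreducible component of ${\bf T}_m$ gives $\limsup_{k\to\infty}\frac{1}{k}\log\operatorname{tr}({\bf T}_m^k)=\log\rho({\bf T}_m)$ along an arithmetic progression of $k$ accounting for imprimitivity. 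Theorem~\ref{thm0-4} supplies the $\gamma$-Hermite representation of the same lattice with $MK=mk$ and $K=\gcd(cm,ak)$ when $c\neq 0$ (the case $c=0$ is trivial since $\gamma$ is then upper triangular and $(M,K)=(m,k)$). Choosing $k$ as a multiple of $cm$ lying in the Perron--Frobenius arithmetic progression and setting $s=k/(cm)$, one obtains $K\geq cm$, $M=ms\to\infty$ as $s\to\infty$, and $\frac{1}{MK}\log\Gamma_\mathcal{B}(\Lambda_{m,k})\to\frac{\log\rho({\bf T}_m)}{m}$. A diagonal construction along a subsequence $m_j\to\infty$ realising $\frac{\log\rho({\bf T}_{m_j})}{m_j}\to h_*$ will then yield $h_{\gamma,p}(\mathcal{B})\geq h_*(\mathcal{B})$.

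The main obstacle is the simultaneous coordination of two independent limits: one must choose $(m,k)$ so that both Hermite dimensions $(M,K)$ in $\gamma$-coordinates tend to infinity---this is not automatic, for instance with $\gamma=\left[\begin{smallmatrix} 1 & 0 \\ 1 & 1 \end{smallmatrix}\right]$ and coprime $m,k$ a direct computation gives $K=1$---while also aligning $k$ with the arithmetic progression that makes $\operatorname{tr}({\bf T}_m^k)$ track $\rho({\bf T}_m)^k$. The explicit Hermite transformation~(\ref{1-12}) resolves the first constraint and the Perron--Frobenius subsequence resolves the second, so the diagonal construction outlined above makes both compatible.
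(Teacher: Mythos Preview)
Your plan is essentially the paper's own argument: bound $h_{\gamma,p}(\mathcal{B})\le h_\gamma(\mathcal{B})=h(\mathcal{B})$ from above, and from below realise $h_*(\mathcal{B})$ inside $h_{\gamma,p}(\mathcal{B})$ by exhibiting lattices whose $\gamma_0$-Hermite form gives $\Gamma=\operatorname{tr}({\bf T}_m^{k})$, then pass to the spectral radius via an iterated $\limsup$ and~(\ref{4.10}). The paper carries this out in Lemma~\ref{lemma 410} by specialising the parameters in Theorem~\ref{theorem1} to $m'=1,\ b_1=1,\ b_2=0$, producing the $\gamma_0$-lattice $\left[\begin{smallmatrix} m & am\\ 0 & bm\end{smallmatrix}\right]_{\gamma_0}$ with $\gamma$-Hermite dimensions $(M,K)=(m,bm)$; since ${\bf R}_m^{am}=I$ this lattice has $\Gamma=\operatorname{tr}({\bf T}_m^{bm})$, and after column reduction it is exactly your rectangular $\left[\begin{smallmatrix} m & 0\\ 0 & bm\end{smallmatrix}\right]_{\gamma_0}$. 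So the two constructions coincide.

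There is, however, a computational slip in your Hermite calculation. Expressing $(x,y)_{\gamma_0}$ in $\gamma$-coordinates uses $(\gamma^t)^{-1}$, whose second row is $(-b,a)/\Delta$; hence the bottom row of the transformed generator matrix is $(-bm,ak)$ and the correct formula is $K=\gcd(bm,ak)$, not $\gcd(cm,ak)$. Correspondingly the case split should be $b\ne 0$ versus $b=0$, matching the hypothesis of Theorem~\ref{thm0-4}. Your test example $\gamma=\left[\begin{smallmatrix}1&0\\1&1\end{smallmatrix}\right]$ has $b=0$, and a direct check shows the $\gamma$-Hermite form of $\left[\begin{smallmatrix}m&0\\0&k\end{smallmatrix}\right]_{\gamma_0}$ is $\left[\begin{smallmatrix}m&*\\0&k\end{smallmatrix}\right]_\gamma$, so $K=k$ there and the feared obstruction does not occur. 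With the $b\leftrightarrow c$ swap corrected, your diagonal-subsequence argument (choose $k$ a multiple of $bm$, so $K=bm$ and $M=k/b\to\infty$) goes through unchanged and matches the paper.
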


The results also hold  when ${\bf T}_m\left(\mathcal{B}\right)$ is not irreducible. Indeed, if $\overline{\bf T}_m\left(\mathcal{B}\right)$ is a maximum irreducible component of ${\bf T}_m\left(\mathcal{B}\right)$, and $\left\{\overline{\bf T}_m\left(\mathcal{B}\right)\right\}_{m=1}^\infty$ is uniformly dominated by $\{\rho\left({\bf T}_m\left(\mathcal{B}\right)\right) \}_{m=1}^{\infty}$, the results of Theorems \ref{thm0-1} and \ref{theorem0-5} hold. Furthermore, the horizontal periodicity can also be replaced by vertical-periodicity which will produced the analogous results. In this case, we consider the vertical-periodic transition matrices $\hat{\bf T}_m\left(\mathcal{B}\right)$ which is obtained by considering the conjugate coordinated system $\hat{\gamma}_0=\left[\begin{matrix}
0&1\\1&0
\end{matrix} \right]$ of ${\gamma}_0=\left[\begin{matrix}
1&0\\0&1
\end{matrix} \right]$.

Finally, a sequence of skew-coordinated system $\gamma_q=\left[\begin{matrix}
1&q\\0&1
\end{matrix} \right],q\geq 1$, is useful in computing the entropy. Indeed, the transformation of Hermite normal form
\begin{equation}
\left[\begin{matrix}
1&0\\0&mq
\end{matrix} \right]_{\gamma_q}\cong\left[\begin{matrix}
m&1\\0&q
\end{matrix} \right]_{\gamma_0}
\end{equation}
holds. Then we can obtain the following results: 
\begin{theorem}\label{thm0-6}
	If $\{ {\bf T}_m(\mathcal{B})  \}_{m=1}^{\infty}$ is irreducible and uniformly dominated by $\{ \rho\left({\bf T}_m\left(\mathcal{B}\right)\right) \}_{m=1}^{\infty}$. Then 
	\begin{equation*}
		h\left(\mathcal{B}\right)=\limsup_{q\to \infty} \log \rho\left( {\bf T}_{\gamma_q,1}\right).
	\end{equation*}
\end{theorem}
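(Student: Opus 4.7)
The plan is to establish the two bounds $\limsup_{q \to \infty} \log \rho({\bf T}_{\gamma_q, 1}) \le h(\mathcal{B})$ and $\limsup_{q \to \infty} \log \rho({\bf T}_{\gamma_q, 1}) \ge h(\mathcal{B})$, combining Theorems \ref{thm0-1} and \ref{theorem0-5} with the transformation identity $[1 \times mq]_{\gamma_q} \cong [m \times q,\, 1]_{\gamma_0}$ recorded just before the statement. For the upper bound, Theorem \ref{theorem0-5} applied to the uniform-domination hypothesis gives $h_{\gamma_q, p}(\mathcal{B}) = h(\mathcal{B})$ for every $q \geq 1$. Restricting to $n = 1$ (which forces $\ell = 0$) in the Hermite index parameterizing $h_{\gamma_q, p}$ and using the standard trace identity $\operatorname{tr}({\bf T}_{\gamma_q, 1}^k) = \Gamma_\mathcal{B}\bigl([1 \times k]_{\gamma_q}\bigr)$ yields
\[
\log \rho({\bf T}_{\gamma_q, 1}) \;=\; \limsup_{k \to \infty} \frac{1}{k} \log \operatorname{tr}({\bf T}_{\gamma_q, 1}^k) \;\le\; h_{\gamma_q, p}(\mathcal{B}) \;=\; h(\mathcal{B}),
\]
and taking $\limsup_q$ gives the upper bound.

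For the lower bound, the transformation identity with $K = mq$ gives
\[
\operatorname{tr}\bigl({\bf T}_{\gamma_q, 1}^{mq}\bigr) \;=\; \Gamma_\mathcal{B}\bigl([m \times q,\, 1]_{\gamma_0}\bigr) \;=\; \operatorname{tr}\bigl(\Sigma \cdot {\bf T}_m(\mathcal{B})^q\bigr),
\]
where $\Sigma$ is the cyclic shift permutation on the state space of ${\bf T}_m$ and the last equality is the standard expression for skew-one periodic counts via a horizontal-periodic transition matrix. Since ${\bf T}_m$ is irreducible and $\Sigma$ has full support, Perron-Frobenius theory yields $\limsup_{q \to \infty} \operatorname{tr}(\Sigma \cdot {\bf T}_m^q)^{1/q} = \rho({\bf T}_m)$. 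Combined with the bound $\operatorname{tr}({\bf T}_{\gamma_q, 1}^{mq}) \le |\mathcal{S}| \cdot \rho({\bf T}_{\gamma_q, 1})^{mq}$, this gives $\limsup_{q \to \infty} \log \rho({\bf T}_{\gamma_q, 1}) \ge (1/m) \log \rho({\bf T}_m(\mathcal{B}))$ for every $m \geq 1$. Applying Theorem \ref{thm0-1}, which identifies $h(\mathcal{B}) = \limsup_m (1/m) \log \rho({\bf T}_m)$, and choosing $m$ along a subsequence achieving this limsup completes the lower bound.

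The main obstacle is the Perron-Frobenius step $\limsup_{q \to \infty} \operatorname{tr}(\Sigma \cdot {\bf T}_m^q)^{1/q} = \rho({\bf T}_m)$ in the case when ${\bf T}_m$ is irreducible but imprimitive: there $\operatorname{tr}(\Sigma \cdot {\bf T}_m^q)$ may vanish for $q$ outside an arithmetic progression governed by the cyclic period of ${\bf T}_m$, and one must pass to a suitable subsequence of $q$, which is precisely why the theorem's conclusion is stated with $\limsup$ rather than a genuine limit. A secondary check is the trace identity $\Gamma_\mathcal{B}([m \times q, 1]_{\gamma_0}) = \operatorname{tr}(\Sigma \cdot {\bf T}_m^q)$, which is standard but depends on the precise conventions for skew Hermite forms and the shift structure on the state space of ${\bf T}_m$.
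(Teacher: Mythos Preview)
Your lower bound is essentially the paper's route: it combines the transformation identity $[1,0;0,mq]_{\gamma_q}\cong[m,1;0,q]_{\gamma_0}$ (Theorem~\ref{thm 3.8}) with the trace formula $\Gamma_{\mathcal{B}}([m,1;0,q]_{\gamma_0})=\operatorname{tr}({\bf T}_m^q{\bf R}_m)$ (Proposition~\ref{gamma n l o k }), then uses irreducibility of ${\bf T}_m$ to get $\limsup_q\frac1q\log\operatorname{tr}({\bf T}_m^q{\bf R}_m)=\log\rho({\bf T}_m)$ exactly as in Theorems~\ref{thm 5.4}--\ref{thm 5.5}, and finally invokes $h(\mathcal{B})=h_*(\mathcal{B})$ (Theorem~\ref{thm 4.9}). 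One small correction: the bound $\operatorname{tr}({\bf T}_{\gamma_q,1}^{mq})\le |\mathcal{S}|\cdot\rho({\bf T}_{\gamma_q,1})^{mq}$ should have $N_q=r^{q+1}$ (the matrix size) in place of $|\mathcal{S}|$; this still suffices because $(1/(mq))\log N_q\to(\log r)/m$, which vanishes after taking $\limsup_m$.

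Your upper bound, however, has a genuine gap. The inequality
\[
\limsup_{k\to\infty}\frac1k\log\operatorname{tr}\bigl({\bf T}_{\gamma_q,1}^k\bigr)\;\le\;h_{\gamma_q,p}(\mathcal{B})
\]
does \emph{not} follow from ``restricting to $n=1$'' in the definition of $h_{\gamma_q,p}$. That quantity is a $\limsup$ as $(M,K)\to\infty$ with \emph{both} indices tending to infinity, and fixing $M=1$ takes you outside this limit; in general $\limsup_K a_{1,K}$ can strictly exceed $\limsup_{(M,K)\to\infty}a_{M,K}$ (consider $\Gamma=2^K$ independent of $M$: then the left side is $\log 2$ but $h_{\gamma_q,p}=0$). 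So Theorem~\ref{theorem0-5} does not deliver the inequality you want.

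The paper sidesteps this by first proving unconditionally (Theorem~\ref{thm 5.3}) that $h_1(\mathcal{B})=\limsup_q\log\rho({\bf T}_{\gamma_q,1})$, using only the trace bound $\operatorname{tr}({\bf T}_{\gamma_q,1}^{mq})\le N_q\,\rho({\bf T}_{\gamma_q,1})^{mq}$ and the eigenvalue expansion $\operatorname{tr}(A^k)=\sum\lambda_j^k$. The upper bound $\limsup_q\log\rho({\bf T}_{\gamma_q,1})\le h(\mathcal{B})$ then follows from $h_1(\mathcal{B})\le h_p(\mathcal{B})\le h(\mathcal{B})$, which holds for every $\mathcal{B}$ and requires neither irreducibility nor uniform domination. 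In short, the hypotheses are needed only for the lower bound; your invocation of them for the upper bound is both unnecessary and, as written, invalid.
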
	
The rest of this paper is organized	as follows. Section \ref{section2} recalls some useful properties of periodic transition matrices ${\bf T}_m\left(\mathcal{B}\right)$. In addition, Section \ref{section3} recalls the Perron-Frobenius Theorem and introduces uniform domination, uniform connectedness and periodic block gluing and proves Theorems \ref{thm0-1}, \ref{thm0-2} and \ref{thm0-3}. Finally, Section \ref{section4} proves the transformations of Hermite normal forms on different skew-coordinates, then proves Theorems \ref{thm0-4}, \ref{theorem0-5} and \ref{thm0-6}.

\section{Entropy and periodic entropy}\label{section2}
In this section, we briefly introduce some properties which concern the entropy of the two-dimensional shift of finite type, see \cite{BL2005 Patterns generation and transition matrices in multi-dimensional lattice models,BHLL2013 Zeta functions for two-dimensional shifts of finite type,BHL2019 Pattern generation problems arising in multiplicative integer systems,BHLL2021 Verification of mixing properties in two-dimensional shifts of finite type,BLL2007 Patterns generation and spatial entropy in two dimensional lattice models}. Let $\mathbb{Z}_{m\times m}$ be the $m\times m$ square lattice in $\mathbb{Z}^2$ and $\mathcal{S}$ be the finite set of symbols (alphabets or colors). The set of all local patterns (or configurations) on $\mathbb{Z}_{m\times m}$ is denoted by $\mathcal{S}^{\mathbb{Z}_{m\times m}}$. A given subset $\mathcal{B} \subseteq \mathcal{S}^{\mathbb{Z}_{m\times m}}$ is called a basic set of admissible(or allowable) local patterns.

$\Sigma_{n\times k}(\mathcal{B})$ is the set of all admissible patterns generated by $\mathcal{B}$ on $\mathbb{Z}_{n\times k}$ and $\Sigma(\mathcal{B})$ is the set of all admissible patterns generated by $\mathcal{B}$ on $\mathbb{Z}^2$. The entropy (spatial entropy) of $\Sigma(\mathcal{B})$ is defined by
\begin{equation}\label{1}
h(\mathcal{B})=\lim_{(n,k)\rightarrow \infty}\frac{1}{nk} \log \left|\Sigma_{n\times k}(\mathcal{B}) \right|.
\end{equation}
Due to the subadditive property of $\log \left|\Sigma_{n\times k}(\mathcal{B})\right|$ in $n$ and $k$, it can be shown the limit (\ref{1}) always exists, see \cite{Chow1996 Pattern formation}.

As for periodic entropy, the set of finite-index subgroup of $\mathbb{Z}^2$ is denoted by $\mathcal{L}_2$. $\mathcal{L}_2$ can be parameterized in Hermite normal forms
\begin{equation*}
\mathcal{L}_2=\left\{  \left[  \begin{matrix}
		n&\ell \\
		0&k
	\end{matrix}  \right]\mathbb{Z}^2 : n\geq 1, k\geq 1\mbox{ and }0\leq \ell \leq n-1
\right\},
\end{equation*}
see \cite{Lind1998 A zeta function for,Mac Duffie1956 The theory of matrices}. Let $\mathcal{P}_{\mathcal{B}}\left(  \left[  \begin{matrix}
n&\ell \\
0&k
\end{matrix}  \right] \right)$ be the set of all $ \left[  \begin{matrix}
n&\ell \\
0&k
\end{matrix}  \right]$-periodic and $\mathcal{B}$-addmissible patterns on $\mathbb{Z}^2$. The number of $\mathcal{P}_{\mathcal{B}}\left(  \left[  \begin{matrix}
n&\ell \\
0&k
\end{matrix}  \right] \right)$ is denoted by $\Gamma_{\mathcal{B}}\left(  \left[  \begin{matrix}
n&\ell \\
0&k
\end{matrix}  \right] \right)$, while the set of all periodic patterns on $\mathbb{Z}^2$, i.e.,
\begin{equation*}
	\begin{aligned}
\mathcal{P}(\mathcal{B})=\bigcup_{n,k\geq 1}\bigcup_{0\leq \ell \leq n-1}\mathcal{P}_{\mathcal{B}}\left(  \left[  \begin{matrix}
	n&\ell \\
	0&k
\end{matrix}  \right] \right)
\end{aligned}
\end{equation*}
is denoted by $\mathcal{P}(\mathcal{B})$. Then the periodic entropy $h_p(\mathcal{B})$ of $\mathcal{P}(\mathcal{B})$ is defined by

\begin{equation}\label{3}
h_p(\mathcal{B})=\limsup_{(n,k)\rightarrow\infty} \sup_{0\leq \ell \leq n-1} \frac{1}{nk} \log \Gamma_{\mathcal{B}}\left(  \left[  \begin{matrix}
	n&\ell \\
	0&k
	\end{matrix}  \right] \right) .
\end{equation}
Unlike spatial entropy, the limit of (\ref{3}) does not exist in general and $\limsup$ is required.
  
It is clear that
\begin{equation}\label{2}
\begin{aligned}
\mathcal{P}(\mathcal{B})\subseteq \Sigma(\mathcal{B}).
\end{aligned}
\end{equation}
Therefore,
\begin{equation}\label{4}
\begin{aligned}
h_p(\mathcal{B})\leq h(\mathcal{B}).
\end{aligned}
\end{equation}
Let $\mathcal{S}_r=\{ 0,1,...,r-1\} $ be the set of symbols. It is known, when $r\geq 5$, for some $\mathcal{B}\subseteq \mathcal{S}_r^{\mathbb{Z}_{2\times 2}}$ have $\mathcal{P}(\mathcal{B})=\emptyset$ and $\Sigma (\mathcal{B})\neq \emptyset$ with $h(\mathcal{B})>0$. Therefore, 
\begin{equation}\label{5}
\begin{aligned}
h_p(\mathcal{B})< h(\mathcal{B}),
\end{aligned}
\end{equation}
see \cite{Culik II1996 An aperiodic set,Kari1996 A small aperiodic}. On the other hand, when $\Sigma(\mathcal{B})$ has some mixing property, then 
\begin{equation}\label{6}
\begin{aligned}
h_p(\mathcal{B})= h(\mathcal{B}),
\end{aligned}
\end{equation}
see \cite{Boyle2010 Multidimensional sofic shifts,Ward1994 Automorphisms}.

In the study of the entropy in \cite{BL2005 Patterns generation and transition matrices in multi-dimensional lattice models,BHLL2013 Zeta functions for two-dimensional shifts of finite type,BLL2007 Patterns generation and spatial entropy in two dimensional lattice models}, a sequence of matrices of patterns ${\bf X}_{2\times n}$ and ${\bf Y}_{m\times 2}$ are introduced to store all patterns in $\Sigma_{2\times n}$ and $\Sigma_{m\times 2}$, respectively. In the following, the two symbols $\mathcal{S}_2=\{0,1 \}$ case is introduced. The general cases $\mathcal{S}_r=\{ 0,1,...,r-1 \}$ with $r\geq 3$ can also be introduced, see \cite{BL2005 Patterns generation and transition matrices in multi-dimensional lattice models,BLL2007 Patterns generation and spatial entropy in two dimensional lattice models,BLL2009 Mixing property and entropy conjugacy of Z2 subshift of finite type: a case study,BHLL2021 Verification of mixing properties in two-dimensional shifts of finite type}. In the case of two symbols, for $\Sigma_{2\times 2}$, a $4\times 4$ matrix ${\bf X}_{2\times 2}=\left[ x_{i_1i_2} \right]$ with $2\times 2$ patterns $x_{i_1i_2}$ as its entries is defined by 
\begin{equation*}
\begin{aligned}
{\bf X}_{2\times 2}&=
\left[\begin{matrix}
\includegraphics[scale=0.3]{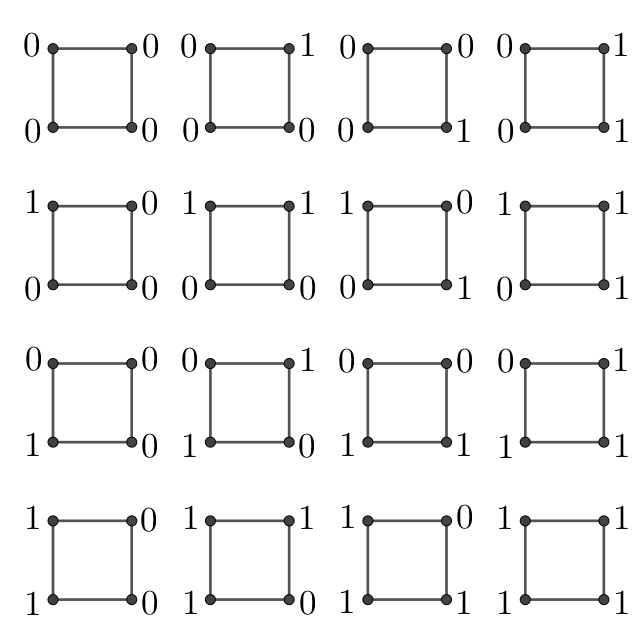}
\end{matrix}\right],
\end{aligned}
\end{equation*}
and ${\bf Y}_{2\times 2}=\left[y_{j_1j_2}\right]$ is defined by
\begin{equation*}
\begin{aligned}{\bf Y}_{2\times 2}=\left[
	\begin{matrix}
\includegraphics[scale=0.3]{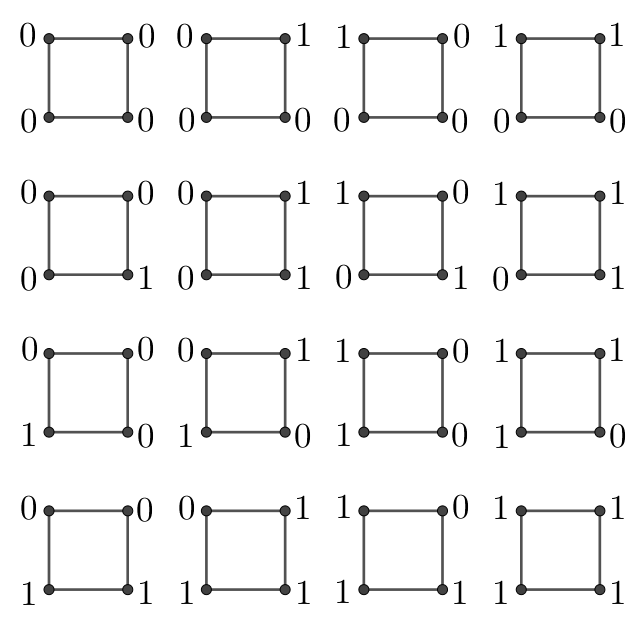}
\end{matrix}\right].
\end{aligned}
\end{equation*}
${\bf X}_{2\times 2}$ is called the horizontal pattern matrix with height 2 and ${\bf Y}_{2\times 2}$ is called the vertical pattern matrix with width 2. Both of them store all $2\times 2$ patterns and the arrangements of these patterns are closely related as follows. Indeed, ${\bf X_{2\times 2}}$ can be presented by $y_{j_1j_2}$ as
\begin{equation*}
\begin{aligned}
{\bf X}_{2\times 2}=\left[
\begin{matrix}
y_{11}&y_{12}&y_{21}&y_{22}\\
y_{13}&y_{14}&y_{23}&y_{24}\\
y_{31}&y_{32}&y_{41}&y_{42}\\
y_{33}&y_{34}&y_{43}&y_{44}
\end{matrix}\right],
\end{aligned}
\end{equation*}
or
\begin{equation}
\begin{aligned}
{\bf X}_{2\times 2}=\left[
\begin{matrix}
X_{2;1}&X_{2;2}\\
X_{2;3}&X_{2;4}
\end{matrix}\right],
\end{aligned}
\end{equation}
with
\begin{equation*}
X_{2;k}=\left[
\begin{matrix}
y_{k1}&y_{k2}\\
y_{k3}&y_{k4}
\end{matrix}\right].
\end{equation*}
 Similarly,
\begin{equation}
	\begin{aligned}
		{\bf Y}_{2\times 2}=\left[
		\begin{matrix}
			x_{11}&x_{12}&x_{21}&x_{22}\\
			x_{13}&x_{14}&x_{23}&x_{24}\\
			x_{31}&x_{32}&x_{41}&x_{42}\\
			x_{33}&x_{34}&x_{43}&x_{44}
		\end{matrix}\right]=
		\left[ \begin{matrix} 
		Y_{2;1}&Y_{2;2}\\
		Y_{2;3}&Y_{2;4}
		\end{matrix} \right].
	\end{aligned}
\end{equation}

Furthermore, all $2\times n$ patterns in $\Sigma_{2\times n}$ can be stored in pattern matrix ${\bf X}_{2\times n}$ with height $n$ which can be defined inductively from ${\bf X}_{2\times 2}$. Indeed, denote 
\begin{equation*}
\begin{aligned}
{\bf X}_{2\times n}=\left[
\begin{matrix}
X_{n;1}&X_{n;2}\\
X_{n;3}&X_{n;4}\\
\end{matrix}\right],
\end{aligned}
\end{equation*}
then

\begin{align}
{\bf X}_{2\times (n+1)}&={\bf X}_{2\times n}\circ \left({\bf E}_{2^{n-1}} \otimes\left[
\begin{matrix}
X_{2;1}&X_{2;2}\\
X_{2;3}&X_{2;4}
\end{matrix}\right]\right)\label{X_2*n+1-1}\\
&={\bf X}_{2\times 2}\circ \left({\bf E}_2 \otimes\left[
\begin{matrix}
X_{n;1}&X_{n;2}\\
X_{n;3}&X_{n;4}
\end{matrix}\right]\right).\label{X_2*n+1-2}
\end{align}
Similarly, for $\Sigma_{m\times 2}$, ${\bf Y}_{m\times 2}$ can be defined recursively with
	\begin{align}
		{\bf Y}_{(m+1)\times 2}
		&={\bf Y}_{m\times 2}\circ \left({\bf E}_{2^{m-1}} \otimes\left[
		\begin{matrix}
			Y_{2;1}&Y_{2;2}\\
			Y_{2;3}&Y_{2;4}\\
		\end{matrix}\right]\right)\label{Y_m+1*2-1}\\
		&={\bf Y}_{2\times 2}\circ \left({\bf E}_2 \otimes\left[
		\begin{matrix}
		Y_{m;1}&Y_{m;2}\\
		Y_{m;3}&Y_{m;4}\\
		\end{matrix}\right]\right),\label{Y_m+1*2-2}
	\end{align}
where ${\bf E}_{2^{n-1}}$ is the $2^{n-1}\times 2^{n-1}$ matrices and entries are 1. $\otimes$ is the Kronecker product (tensor product) and $\circ$ is the Hadamard product i.e., if $A=\left[a_{i,j}\right]_{N\times N}$ and $B=\left[b_{i,j}\right]_{N\times N}$ then $A\circ B=\left[ a_{i,j}b_{i,j} \right]_{N\times N}$. When both $a_{i,j}$ and $b_{i,j}$ are numbers, or matrices of numbers, then $a_{i,j}b_{i,j}$ is the product. When $a_{i,j}$ and $b_{i,j}$ are patterns, it will be understood in the context.

Now, given a basic set of admissible patterns $\mathcal{B}\subseteq \mathcal{S}_2^{\mathbb{Z}_{2\times 2}}$, then the horizontal transition matrix ${\bf H}_n(\mathcal{B})$ can be defined for $\Sigma_{2\times n}(\mathcal{B})$ and vertical transition matrix ${\bf V}_n$ for $\Sigma_{n\times 2}$, respectively. Indeed, ${\bf H}_2$ and ${\bf V}_2$ are defined by
\begin{equation*}
	\begin{aligned}
{\bf H}_2=\left[h_{i_1,i_2} \right]_{4\times 4} \mbox{ and }{\bf V}_2=\left[v_{j_1,j_2} \right]_{4\times 4},
\end{aligned}
\end{equation*}
where
\begin{equation*}
		h_{i_1,i_2}=1 \mbox{ if and only if }x_{i_1i_2}\in \mathcal{B},
\end{equation*}
and	
\begin{equation*}	
		v_{j_1,j_2}=1 \mbox{ if and only if }y_{j_1j_2}\in \mathcal{B}.
\end{equation*}
For $n\geq 3$, ${\bf H}_n$ and ${\bf V}_n$ can be defined recursively by applying (\ref{X_2*n+1-1}) to (\ref{Y_m+1*2-2}) respectively. Indeed,

\begin{align}\label{H_n+1}
{\bf H}_{n+1}&={\bf H}_{n}\circ \left({\bf E}_{2^{n-1}} \otimes\left[
\begin{matrix}
H_{2;1}&H_{2;2}\\
H_{2;3}&H_{2;4}
\end{matrix}\right]\right)\\
&={\bf H}_{2}\circ \left({\bf E}_{2} \otimes\left[
\begin{matrix}
H_{n;1}&H_{n;2}\\
H_{n;3}&H_{n;4}
\end{matrix}\right]\right),
\end{align}
and
\begin{align}\label{V_m+1}
{\bf V}_{m+1}
&={\bf V}_{m}\circ \left({\bf E}_{2^{m-1}} \otimes\left[
\begin{matrix}
V_{2;1}&V_{2;2}\\
V_{2;3}&V_{2;4}
\end{matrix}\right]\right)\\
&={\bf V}_{2}\circ \left({\bf E}_2 \otimes\left[
\begin{matrix}
V_{m;1}&V_{m;2}\\
V_{m;3}&V_{m;4}
\end{matrix}\right]\right),
\end{align}
where
\begin{equation}
{\bf H}_2=\left[
\begin{matrix}
H_{2;1}&H_{2;2}\\
H_{2;3}&H_{2;4}
\end{matrix}\right]
\end{equation}
and
\begin{equation}
{\bf V}_2=\left[
\begin{matrix}
V_{2;1}&V_{2;2}\\
V_{2;3}&V_{2;4}
\end{matrix}\right].
\end{equation}

 Therefore, the entropy of $\Sigma(\mathcal{B})$ can be computed by 
\begin{align}\label{2.11}
h(\mathcal{B})&=\lim_{(n,k)\rightarrow \infty}\frac{1}{nk}\log \left| {\bf H}_n^{k-1}  \right|\\
&=\lim_{(m,\ell)\rightarrow \infty}\frac{1}{m\ell}\log \left| {\bf V}_m^{\ell-1}  \right|.
\end{align}
Furthermore, by Perron-Frobenius theorem, \cite{Gantmacher1960 Matrix Theory,Horn1990 Matrix analysis,Lind1995 An introduction}, 
	\begin{align}
		h(\mathcal{B})&=\limsup_{n\rightarrow \infty}\frac{1}{n}\log \rho({\bf H}_n)\\
		&=\limsup_{m\rightarrow \infty}\frac{1}{m}\log \rho( {\bf V}_m ),
	\end{align}
where $\rho(A)$ is the largest (maximum) eigenvalue of matrix A.

Next, to study the periodic entropy, we need to introduce the matrix of horizontal cylindrical patterns, see \cite{BHLL2013 Zeta functions for two-dimensional shifts of finite type,BLL2007 Patterns generation and spatial entropy in two dimensional lattice models}. Indeed, the $x$-periodic patterns with periodic 2 in $\Sigma_{2\times 2}$ can be stored in cylindrical pattern matrix ${\bf C}_{2\times 2}$ as follows.
\begin{equation}{\bf C}_{2\times 2}=\left[
	\begin{matrix}
		\begin{aligned}
			\includegraphics[scale=0.3]{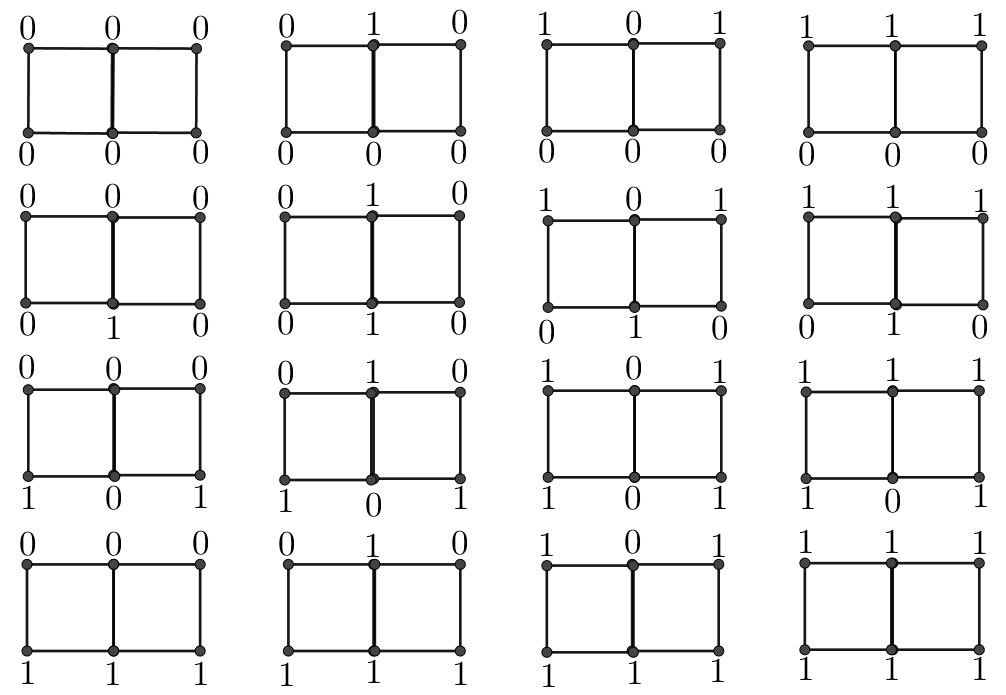}
		\end{aligned}
	\end{matrix}\right].
\end{equation}
${\bf C}_{2\times 2}$ can also be represented in terms of $x_{i_1i_2}$ or $y_{j_1j_2}$. We first introduce the column matrices of patterns $\tilde{\bf X}_{2\times 2}$ of ${\bf X}_{2\times 2}$ and $\tilde{\bf Y}_{2\times 2}$ of ${\bf Y}_{2\times 2}$, respectively,
\begin{equation}
	\begin{aligned}
		\tilde{\bf X}_{2\times 2}&=\left[
		\begin{matrix}		
			x_{11}&x_{21}&x_{12}&x_{22}\\
			x_{31}&x_{41}&x_{32}&x_{42}\\
			x_{13}&x_{23}&x_{14}&x_{24}\\
			x_{33}&x_{43}&x_{34}&x_{44}
		\end{matrix}\right]\\
		&=\left[\begin{matrix}		
			\tilde{X}_{2;1}&\tilde{X}_{2;2}\\
			\tilde{X}_{2;3}&\tilde{X}_{2;4}
		\end{matrix}\right],
	\end{aligned}
\end{equation}
where
\begin{equation*}
	\begin{aligned}
		\tilde{X}_{2;\alpha}=\left[
		\begin{matrix}
			\includegraphics[scale=0.3]{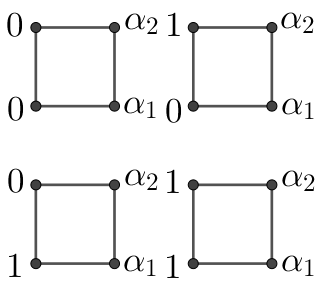}
		\end{matrix}\right],
	\end{aligned}
\end{equation*}
$\alpha=1+2\alpha_1+\alpha_2$ and
\begin{equation}
	\begin{aligned}
		\tilde{\bf Y}_{2\times 2}&=\left[
		\begin{matrix}		
			y_{11}&y_{21}&y_{12}&y_{22}\\
			y_{31}&y_{41}&y_{32}&y_{42}\\
			y_{13}&y_{23}&y_{14}&y_{24}\\
			y_{33}&y_{43}&y_{34}&y_{44}
		\end{matrix}\right]\\
		&=\left[\begin{matrix}		
			\tilde{Y}_{2;1}&\tilde{Y}_{2;2}\\
			\tilde{Y}_{2;3}&\tilde{Y}_{2;4}
		\end{matrix}\right],
	\end{aligned}
\end{equation}
where
\begin{equation*}
	\begin{aligned}
		\tilde{Y}_{2;\beta}=\left[
		\begin{matrix}
			\includegraphics[scale=0.3]{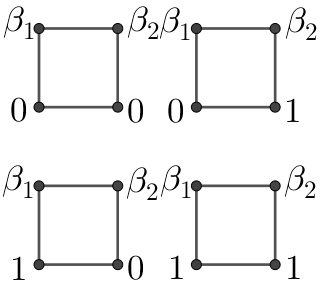}
		\end{matrix}\right],
	\end{aligned}
\end{equation*}
$\beta=1+2\beta_1+\beta_2$.

Then
\begin{equation}
	\begin{aligned}
	{\bf C}_{2\times 2}&=\left[
	\begin{matrix}		
			x_{11}x_{11}&x_{12}x_{21}&x_{21}x_{12}&x_{22}x_{22}\\
			x_{13}x_{31}&x_{14}x_{41}&x_{23}x_{32}&x_{24}x_{42}\\
			x_{31}x_{13}&x_{32}x_{23}&x_{41}x_{14}&x_{42}x_{24}\\
			x_{33}x_{33}&x_{34}x_{43}&x_{43}x_{34}&x_{44}x_{44}
				\end{matrix}\right]\\
				&={\bf Y}_{2\times 2}\circ 	\tilde{\bf X}_{2\times 2}.
		\end{aligned}
\end{equation}
Similarly, the $y$-periodic patterns with period 2 in $\Sigma_{2\times 2}$ can be stored in vertical cylindrical pattern matrix $	\hat{{\bf C}}_{2\times 2}$ with
\begin{equation}
\hat{{\bf C}}_{2\times 2}={\bf X}_{2\times 2}\circ 	\tilde{\bf Y}_{2\times 2}.
\end{equation}
The horizontal cylindrical pattern matrix which consists of patterns of $x$-periodic with period n and high 2, defined by
\begin{equation}\label{C_n2}
	\begin{aligned}
		{\bf C}_{n\times 2}&={\bf Y}_{n\times 2}\circ \left[ \begin{matrix}
		{\bf E}_{2^{n-2}}\otimes \tilde{X}_{2;1} & 	{\bf E}_{2^{n-2}}\otimes \tilde{X}_{2;2}\\
		{\bf E}_{2^{n-2}}\otimes \tilde{X}_{2;3} & 	{\bf E}_{2^{n-2}}\otimes \tilde{X}_{2;4}
		\end{matrix}\right],
	\end{aligned}
\end{equation}
and the vertical cylindrical matrix which consists of patterns of $y$-periodic with period n and wide 2 can be defined by
\begin{equation}\label{hatC_2n}
	\begin{aligned}
		\hat{\bf C}_{2\times n}&={\bf X}_{2\times n}\circ \left[ \begin{matrix}
			{\bf E}_{2^{n-2}}\otimes \tilde{Y}_{2;1} & 	{\bf E}_{2^{n-2}}\otimes \tilde{Y}_{2;2}\\
			{\bf E}_{2^{n-2}}\otimes \tilde{Y}_{2;3} & 	{\bf E}_{2^{n-2}}\otimes \tilde{Y}_{2;4}
		\end{matrix}\right].
	\end{aligned}
\end{equation}

Given a basic set of admissible patterns $\mathcal{B}$, then the horizontal cylindrical (or horizontal-periodic) transition matrix ${\bf T}_n(\mathcal{B})$ of ${\bf C}_{n\times 2}$ and vertical cylindrical (or vertical-periodic) matrix $\hat{\bf T}_n(\mathcal{B})$ of $\hat{\bf C}_{n\times 2}$ can be defined by
\begin{equation}
{\bf T}_2(\mathcal{B})={\bf V}_2\circ \tilde{\bf H}_2
\end{equation}
and
\begin{equation}
\hat{\bf T}_2(\mathcal{B})={\bf H}_2\circ \tilde{\bf V}_2,
\end{equation}
where $\tilde{\bf H}_2$ and $\tilde{\bf V}_2$ are column matrices of ${\bf H}_2$ and ${\bf V}_2$ respectively. Furthermore, from (\ref{C_n2}) and (\ref{hatC_2n}), we have
\begin{equation}\label{T_n}
	\begin{aligned}
		{\bf T}_n={\bf V}_n\circ \left[ \begin{matrix}
			{\bf E}_{2^{n-2}}\otimes \tilde{H}_{2;1} & 	{\bf E}_{2^{n-2}}\otimes \tilde{H}_{2;2}\\
			{\bf E}_{2^{n-2}}\otimes \tilde{H}_{2;3} & 	{\bf E}_{2^{n-2}}\otimes \tilde{H}_{2;4}
		\end{matrix}\right],
			\end{aligned}
	\end{equation}
and	
\begin{equation}\label{hatT_n}
			\begin{aligned}
		\hat{\bf T}_n={\bf H}_n\circ \left[ \begin{matrix}
			{\bf E}_{2^{n-2}}\otimes \tilde{V}_{2;1} & 	{\bf E}_{2^{n-2}}\otimes \tilde{V}_{2;2}\\
			{\bf E}_{2^{n-2}}\otimes \tilde{V}_{2;3} & 	{\bf E}_{2^{n-2}}\otimes \tilde{V}_{2;4}
		\end{matrix}\right].
	\end{aligned}
\end{equation}

Next, we recall the periodic patterns in $\mathbb{Z}^2$. A global pattern ${\bf U}=\left[  U_{i,j}  \right]$ on $\mathbb{Z}^2$ is called $\left[\begin{matrix}
n&\ell\\
0&k
\end{matrix}\right]$-periodic if it satisfies
\begin{equation}
U_{i+rn+s\ell,j+sk}=U_{i,j}
\end{equation}
for all $(i,j)\in \mathbb{Z}^2$ and $r,s \in \mathbb{Z}$. Note that $\left[\begin{matrix}
n&0\\
0&k
\end{matrix}\right]$-periodic is just the rectangular $(n,k)$-periodic. To study these periodic patterns, we need to recall the rotational matrix ${\bf R}_n, n\geq 1$, see \cite{BHLL2013 Zeta functions for two-dimensional shifts of finite type}. 

The shift (to the left) $\sigma$ of any $n$-sequence $(u_1u_2\cdots u_n)\in \{ 0,1\}^{n}$ is defined by
\begin{equation}
\sigma(u_1u_2\cdots u_n)=(u_2u_3\cdots u_nu_1).
\end{equation}
Notably, the shift of any one-dimensional periodic sequence $(u_1u_2\cdots u_n)^{\infty}=(u_1u_2\cdots u_n u_1u_2\cdots)$ with period $n$ becomes $(u_2u_3\cdots u_nu_1)^\infty=(u_2u_3\cdots u_nu_1u_2u_3\cdots)$. The sequence $(u_1u_2\cdots u_n)$ and its shift $(u_2\cdots u_nu_1)$ can also be related by their counting number $\chi(u_1u_2\cdots u_n)$. Define
\begin{equation*}
	\begin{aligned}
	i=\chi(u_1u_2\cdots u_n)=1+\sum_{j=1}^n u_j 2^{n-j},
	\end{aligned}
\end{equation*}
and $n$-shift map
\begin{equation}
		\sigma(i)\equiv\sigma_n(i)=\chi(u_2u_3\cdots u_nu_1).
\end{equation}
 It is easy to see that $\sigma$ is a bijection map on $\left\{1,2,...,2^n\right\}$. For any $n\geq 1$, the $2^n \times 2^n$ rotational matrix ${\bf R}_n=\left[ R_{n;i,j} \right], R_{n;i,j}\in \{ 0,1 \}$, is defined by
\begin{equation}
		R_{n;i,j}=1\mbox{ if and only if } j=\sigma(i).
\end{equation}
${\bf R}_n$ is a permutation matrix. Indeed, ${\bf R}_n$ can be written explicitly as follows.
\begin{proposition}\label{Rotation}
	For any integer $n\geq 2$,
	\begin{equation}
	\left\{	\begin{aligned}
	&R_{n;i,2i-1}=1 \mbox{ \rm and }R_{n;2^{n-1}+i,2i}=1& &\mbox{\rm, if }1\leq i \leq 2^{n-1}, \\
	&R_{n;i,j}=0&                     &\mbox{\rm, otherwise.} 
		\end{aligned}\right.
\end{equation}
Equivalently,
	\begin{equation}
	\sigma(i)=\left\{\begin{aligned}
			&2i-1           &  &\mbox{\rm, if }1\leq i \leq 2^{n-1}, \\
			&2(i-2^{n-1})&  &\mbox{\rm, if }2^{n-1}<i\leq 2^n. 
	\end{aligned}\right.
\end{equation}
Furthermore, $({\bf R}_n^j)_{i,\sigma^j(i)}=1$ for any $1\leq j \leq n-1$ and ${\bf R}_n^n=I_{2^n}$ where $I_N$ is the $N\times N$ identity matrix.
\end{proposition}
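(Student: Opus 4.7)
The plan is to work directly with the binary representation implicit in the definition of $\chi$. Writing $i-1 = \sum_{j=1}^{n} u_j 2^{n-j}$ identifies $i-1$ with the binary string $(u_1 u_2 \cdots u_n)$ in which $u_1$ is the most significant bit. Under this identification the condition $1 \leq i \leq 2^{n-1}$ is precisely $u_1 = 0$, and $2^{n-1} < i \leq 2^n$ is $u_1 = 1$. The proof will therefore split into these two cases.

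In the first case ($u_1 = 0$), the shifted sequence $(u_2 u_3 \cdots u_n u_1)$ has a trailing zero, so
\begin{equation*}
\sigma(i)-1 \;=\; \sum_{k=1}^{n-1} u_{k+1} 2^{n-k} + 0 \;=\; 2\sum_{j=2}^{n} u_j 2^{n-j} \;=\; 2(i-1),
\end{equation*}
yielding $\sigma(i) = 2i-1$. In the second case ($u_1 = 1$), set $i' = i - 2^{n-1}$, so that $i'-1 = \sum_{j=2}^{n} u_j 2^{n-j}$; then the same computation gives $\sigma(i) - 1 = 2(i'-1) + 1$, i.e. $\sigma(i) = 2i' = 2(i - 2^{n-1})$. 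This is exactly the second displayed formula. The entries of $\mathbf{R}_n$ follow immediately from the defining property $R_{n;i,j} = 1 \Leftrightarrow j = \sigma(i)$, by substituting $j = 2i-1$ (resp. $j = 2(i-2^{n-1})$) and ranging $i$ over the appropriate half of the index set.

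To see that $\sigma$ is a bijection of $\{1,\ldots,2^n\}$, note that the image of the first case is $\{1,3,\ldots,2^n-1\}$ (the odd numbers) while the image of the second case is $\{2,4,\ldots,2^n\}$ (the even numbers); together they partition $\{1,\ldots,2^n\}$, so $\sigma$ is a permutation and $\mathbf{R}_n$ is a permutation matrix. Finally, the claim $(\mathbf{R}_n^j)_{i,\sigma^j(i)} = 1$ is a one-line induction on $j$: using $(\mathbf{R}_n^{j})_{i,\ell} = \sum_{s}(\mathbf{R}_n^{j-1})_{i,s} (\mathbf{R}_n)_{s,\ell}$ and the fact that $\mathbf{R}_n$ is a permutation matrix, the only nonzero contribution comes from $s = \sigma^{j-1}(i)$ and $\ell = \sigma(s) = \sigma^j(i)$. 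The relation $\mathbf{R}_n^n = I_{2^n}$ then follows because $\sigma^n$ corresponds to shifting the $n$-periodic sequence $n$ times, which restores it, so $\sigma^n$ is the identity on $\{1,\ldots,2^n\}$.

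The main obstacle is purely bookkeeping: tracking how the cyclic shift of the $n$-tuple interacts with the base-$2$ weighting used in $\chi$, in particular that the leading bit $u_1$ controls the dichotomy $i \leq 2^{n-1}$ vs. $i > 2^{n-1}$ and that moving it to the least significant position accounts for the factor of $2$ (and possibly $+1$) appearing in $\sigma(i)$. Everything else is routine linear-algebraic verification from the definition of matrix multiplication.
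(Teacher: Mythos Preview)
Your proof is correct. The paper itself states this proposition without proof (it is presented as an elementary consequence of the definitions of $\chi$, $\sigma$, and $\mathbf{R}_n$, with the surrounding results attributed to \cite{BHLL2013 Zeta functions for two-dimensional shifts of finite type}), so there is nothing to compare against; your direct computation via the binary expansion $i-1=\sum_{j=1}^{n}u_j 2^{n-j}$, splitting on the leading bit $u_1$, is exactly the intended verification.
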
	

We recall the following results in \cite{BHLL2013 Zeta functions for two-dimensional shifts of finite type}.
\begin{proposition}For $n\geq 2$, ${\bf T}_n=\left[  T_{n;i,j}  \right]$ is ${\bf R}_n$-symmetric, i.e., 
	\begin{equation*}
	\begin{aligned}
T_{n;\sigma^{\ell}(i),\sigma^{\ell}(j)}=T_{n;i,j}
	\end{aligned}
\end{equation*}	
for all $1\leq i,j \leq 2^n$ and $0\leq \ell \leq n-1$.	 
\end{proposition}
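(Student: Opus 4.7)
The plan is to interpret the entry $T_{n;i,j}$ combinatorially in terms of horizontally $n$-periodic $2\times n$ patterns and then exploit the fact that the bijection $\sigma$ on indices corresponds precisely to a cyclic shift of the underlying $n$-sequence. Concretely, by unwinding the definition of ${\bf T}_n$ through (\ref{C_n2}) and (\ref{T_n}), the entry $T_{n;i,j}$ equals $1$ if and only if the horizontally $n$-periodic $2\times\infty$ pattern whose top row is $(u_1 u_2 \cdots u_n)^{\infty}$ and bottom row is $(v_1 v_2 \cdots v_n)^{\infty}$, with $i=\chi(u_1\cdots u_n)$ and $j=\chi(v_1\cdots v_n)$, is $\mathcal{B}$-admissible. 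Admissibility here is the condition that for every $k\in\{1,\dots,n\}$ the $2\times 2$ sub-block
\begin{equation*}
\left[\begin{matrix} u_k & u_{k+1} \\ v_k & v_{k+1}\end{matrix}\right] \quad (\text{indices mod }n)
\end{equation*}
belongs to $\mathcal{B}$.

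Next, by Proposition \ref{Rotation}, $\sigma$ acts on $\{1,\dots,2^n\}$ as the left shift on the binary $n$-string: $\sigma(\chi(u_1 u_2\cdots u_n))=\chi(u_2 u_3\cdots u_n u_1)$. Therefore, passing from $(i,j)$ to $(\sigma(i),\sigma(j))$ replaces the pair of sequences $((u_k),(v_k))$ by $((u_{k+1}),(v_{k+1}))$, which simply relabels the collection of $2\times 2$ sub-blocks cyclically: the $k$-th sub-block of the shifted pair coincides with the $(k+1)$-th sub-block of the original pair (again indices mod $n$). Consequently, the set of $2\times 2$ sub-blocks that must lie in $\mathcal{B}$ is unchanged, yielding $T_{n;\sigma(i),\sigma(j)}=T_{n;i,j}$. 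Iterating $\ell$ times gives the claim for every $0\leq\ell\leq n-1$; since ${\bf R}_n^n=I_{2^n}$, the identity in fact extends to all $\ell\in\mathbb{Z}$.

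The only real obstacle is verifying the combinatorial identification in the first step: one must check that the recursive Hadamard/Kronecker construction producing ${\bf C}_{n\times 2}$ and ${\bf T}_n$ really indexes horizontally $n$-periodic $2\times n$ patterns by the counting function $\chi$ applied to the two rows, so that the admissibility of entry $(i,j)$ is exactly the admissibility of every cyclic $2\times 2$ window of the two rows. This is a straightforward induction on $n$ using the recursions for ${\bf Y}_{n\times 2}$, $\tilde{X}_{2;\alpha}$, and the definitions of ${\bf V}_n$ and $\tilde{\bf H}_2$, but it is the place where care with the $\chi$-indexing is required. Once this correspondence is in hand, the $\sigma$-invariance is immediate from the cyclic structure, and the proposition follows.
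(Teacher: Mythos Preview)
Your argument is correct. The paper itself does not prove this proposition; it simply recalls it from \cite{BHLL2013 Zeta functions for two-dimensional shifts of finite type}. Your combinatorial reading of $T_{n;i,j}$ as the $\mathcal{B}$-admissibility indicator of the horizontally $n$-periodic height-$2$ pattern whose two rows are encoded by $\chi^{-1}(i)$ and $\chi^{-1}(j)$, together with the observation that $\sigma$ implements the cyclic left shift on those rows, is exactly the intended mechanism and yields the invariance immediately.

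One small remark: be careful about which row is indexed by $i$ and which by $j$ (in the paper ${\bf T}_n$ is a \emph{vertical} transition matrix between horizontally periodic rows, so the convention for top versus bottom comes from ${\bf V}_n$ rather than from ${\bf H}_n$), but this orientation is irrelevant to the symmetry argument since $\sigma$ is applied simultaneously to both rows. The inductive verification you flag as the ``only real obstacle'' is indeed routine once the recursions (\ref{Y_m+1*2-1})--(\ref{Y_m+1*2-2}) and (\ref{T_n}) are unwound, and the cited reference carries it out in detail.
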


\begin{proposition}\label{gamma n l o k }
For $n,k\geq 1$ and $0\leq \ell \leq n-1$,
\begin{equation}
		\Gamma_{\mathcal{B}}\left(  \left[  \begin{matrix}
		n&\ell \\
		0&k
		\end{matrix}  \right] \right)=tr\left( {\bf T}_n^k{\bf R}_n^\ell  \right),
\end{equation}		
and
 \begin{equation}
h_p(\mathcal{B})=\limsup_{(n,k)\rightarrow \infty} \sup_{0\leq \ell \leq n-1} \frac{1}{nk} \log tr\left( {\bf T}_n^k{\bf R}_n^\ell  \right).
\end{equation}	
\end{proposition}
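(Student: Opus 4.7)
The plan is to first prove the trace identity $\Gamma_{\mathcal{B}}\left(\left[\begin{smallmatrix}n&\ell\\0&k\end{smallmatrix}\right]\right)=\operatorname{tr}({\bf T}_n^k{\bf R}_n^\ell)$ by exhibiting a bijection between $\left[\begin{smallmatrix}n&\ell\\0&k\end{smallmatrix}\right]$-periodic $\mathcal{B}$-admissible global patterns on $\mathbb{Z}^2$ and certain length-$k$ walks in the directed graph whose adjacency matrix is ${\bf T}_n$. The entropy assertion then drops out immediately by substituting the trace identity into the definition (\ref{3}) of the periodic entropy.

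For the bijection, the key observation is that a $\left[\begin{smallmatrix}n&\ell\\0&k\end{smallmatrix}\right]$-periodic pattern ${\bf U}=[U_{i,j}]$ is determined by the $k$ consecutive horizontal rows $r_j=(U_{0,j},U_{1,j},\ldots,U_{n-1,j})$ for $0\le j\le k-1$: the identity $U_{i+n,j}=U_{i,j}$ makes every row $n$-periodic, while $U_{i+\ell,j+k}=U_{i,j}$ forces the compatibility $r_{j+k}=\sigma^{-\ell}(r_j)$, where $\sigma$ is the length-$n$ left shift from Proposition \ref{Rotation}. Encoding each row by its counting number $i_j=\chi(r_j)\in\{1,\ldots,2^n\}$, the $\mathcal{B}$-admissibility of the height-$2$ horizontal-periodic strip obtained by stacking $r_{j+1}$ on top of $r_j$ is precisely the statement $({\bf T}_n)_{i_j,i_{j+1}}=1$ by the construction of ${\bf T}_n$ recorded in (\ref{T_n}). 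Counting admissible tuples $(i_0,i_1,\ldots,i_k)$ with consecutive ${\bf T}_n$-transitions and the closing condition $i_k=\sigma^{-\ell}(i_0)$ therefore yields
\begin{equation*}
\Gamma_{\mathcal{B}}\left(\left[\begin{smallmatrix}n&\ell\\0&k\end{smallmatrix}\right]\right)=\sum_{i=1}^{2^n}\left({\bf T}_n^k\right)_{i,\sigma^{-\ell}(i)}.
\end{equation*}

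The last step is to recognise this sum as a trace. Proposition \ref{Rotation} gives $({\bf R}_n^\ell)_{j,i}=1$ exactly when $i=\sigma^\ell(j)$, equivalently $j=\sigma^{-\ell}(i)$, so
\begin{equation*}
\sum_{i}\left({\bf T}_n^k\right)_{i,\sigma^{-\ell}(i)}=\sum_{i,j}\left({\bf T}_n^k\right)_{i,j}\left({\bf R}_n^\ell\right)_{j,i}=\operatorname{tr}\left({\bf T}_n^k{\bf R}_n^\ell\right),
\end{equation*}
which is the desired identity. The main bookkeeping trap, and the step I expect to be most delicate, is the sign of the horizontal shift induced by the vertical translation by $k$: depending on the convention one ends up with either ${\bf R}_n^\ell$ or ${\bf R}_n^{n-\ell}$ inside the trace, and the ${\bf R}_n$-symmetry of ${\bf T}_n$ recorded in the preceding proposition is needed to ensure that the count is unambiguous as $\ell$ ranges over $\{0,1,\ldots,n-1\}$. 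Once this identity is in hand, dividing its logarithm by $nk$ and taking $\limsup_{(n,k)\to\infty}\sup_{0\le\ell\le n-1}$ produces the second equation directly from definition (\ref{3}).
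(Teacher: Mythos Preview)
Your argument is correct and is precisely the standard one. Note, however, that the paper does not actually supply a proof of this proposition: it is quoted from the earlier memoir \cite{BHLL2013 Zeta functions for two-dimensional shifts of finite type} (see the sentence ``We recall the following results in \cite{BHLL2013 Zeta functions for two-dimensional shifts of finite type}'' preceding Propositions~\ref{Rotation}--\ref{gamma n l o k }). The computation you carry out---reducing the count to $\sum_{i}({\bf T}_n^k)_{i,\sigma^{-\ell}(i)}$ and then identifying this with $\operatorname{tr}({\bf T}_n^k{\bf R}_n^\ell)$ via the permutation property of ${\bf R}_n^\ell$---is exactly the one the paper itself repeats later in Section~\ref{section4}, equations (\ref{5.11})--(\ref{5.15}), so your approach is in full agreement with the authors' own handling of the identity.
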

It is worth investigating the periodic patterns in more detail. Indeed, for any integer $\ell$, define the $\ell$-shift periodic entropy $h_{\ell}(\mathcal{B})$ by
\begin{equation}
h_{\ell}(\mathcal{B})=\limsup_{(n,k)\rightarrow\infty}\frac{1}{nk}\log \Gamma_{\mathcal{B}}\left(  \left[  \begin{matrix}
n&\ell \\
0&k
\end{matrix}  \right] \right).
\end{equation}
In particular, when $\ell=0$, $h_0(\mathcal{B})$ is the rectangular periodic entropy, i.e.,
\begin{equation*}
h_0(\mathcal{B})=\limsup_{(n,k)\rightarrow\infty}\frac{1}{nk}\log \Gamma_{\mathcal{B}}\left(  \left[  \begin{matrix}
n&0 \\
0&k
\end{matrix}  \right] \right).
\end{equation*}
For $\ell=1$, $h_1(\mathcal{B})$ is also very useful which will be studied later by using the skew-coordinated system.

It is clear that 
\[
h_{\ell}(\mathcal{B})\leq h_p(\mathcal{B})
\] 
for all $\ell$. The inequalities may be hold for some $\ell$. Indeed, consider the 3 symbols basic set $\mathcal{B}_{3,0}$ defined by
\begin{equation*}
\mathcal{B}_{3,0}=
\left\{
\begin{aligned}
\includegraphics[scale=0.3]{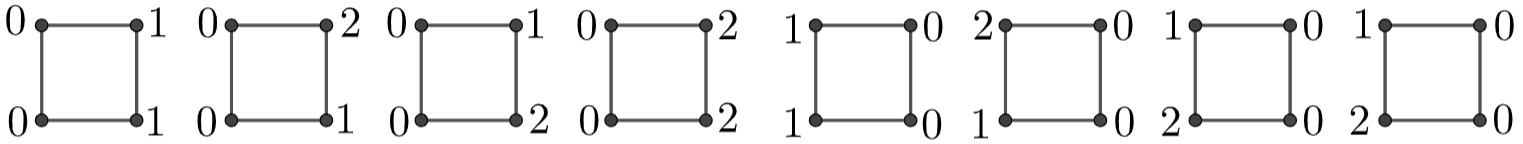}
\end{aligned}
\right\}.
\end{equation*}
It is easy to see 
\begin{equation*}
\Gamma_{\mathcal{B}_{3,0}} \left( \left[ \begin{matrix}
2m&2\ell\\
0&k
\end{matrix}\right] \right)=2^{mk},
\end{equation*}
and
\begin{equation*}
\Gamma_{\mathcal{B}_{3,0}} \left( \left[ \begin{matrix}
2m&2\ell+1\\
0&k
\end{matrix}\right] \right)=0.
\end{equation*}
Therefore,
\begin{equation*}
h_{2\ell}(\mathcal{B}_{3,0})=\frac{1}{2}\log 2 ,h_{2\ell+1}(\mathcal{B}_{3,0})=0,
\end{equation*}
and
\[
h_p(\mathcal{B}_{3,0})=\frac{1}{2}\log 2.
\]
  The problem of $h_{\ell}(\mathcal{B})=h_p(\mathcal{B})$
 will be studied in Section \ref{section4} by using the skew-coordinated system.
\section{Uniform Controls and Connects}\label{section3}
In this section, we will study entropy and periodic entropy and introduce some conditions on $\mathcal{B}$ which implies $h_p(\mathcal{B})=h(\mathcal{B})$.

We begin with recalling some results from the matrix theory, in particular, from the Perron-Frobenius Theorem and its applications, see \cite{Gantmacher1960 Matrix Theory,Horn1990 Matrix analysis}.

A $N\times N$ matrix $A=[a_{i,j}]_{N\times N}$ is non-negative if $a_{i,j}\geq 0$ for all $1\leq i,j \leq N$ and integral if all its entries are integers. 

$\lambda$ is an eigenvalue of $A$ with right (column) eigenvector $ v=(v_1, ..., v_N)^t$ if 
\begin{equation}
Av=\lambda v,
\end{equation}
and left (row) eigenvector $u=(u_1, ..., u_N)$ of $A$ with eigenvalue $\lambda$ if 
\begin{equation}
uA=\lambda u.
\end{equation}

A matrix $A=[a_{i,j}]$ is called irreducible if for any $1\leq i,j \leq N$, there is some positive integer $k$ such that $(A^k)_{i,j}>0$. A is called primitive (irreducible and aperiodic) if there is some positive integer $k$ such that $(A^k)_{i,j}>0$ for all $1\leq i,j \leq N$. It is clear that if $A$ is primitive then $A$ is irreducible.

Now, we state the Perron-Frobenius Theorem.
\begin{theorem}[Perron-Frobenius Theorem]\label{theorem 4.1}
	\item[(i)]Let $A$ be an $N\times N$, $N\geq 2$, irreducible matrix. Then there is the positive maximum eigenvalue $\rho=\rho(A)$ which is algebraic simple and the corresponding eigenvectors $v$ and $u$ are positive (i.e., $v_i>0$ and $u_i>0$ for all $i$.) Any other eigenvalue $\lambda$ of $A$, $\left| \lambda \right|\leq \rho$, $\rho$ is the only eigenvalue with a non-negative eigenvector.
	\item[(ii)]If $A$ is primitive then for each $1\leq i,j \leq N $,
	\begin{equation}\label{4.3}
	\lim_{k \rightarrow \infty} \frac{(A^k)_{i,j}}{\rho^k}= v_i u_j,
	\end{equation}
	and
	\begin{equation}\label{4.4}
	\left| \lambda  \right| < \rho
	\end{equation}	
	for any other eigenvalue $\lambda$ of $A$.
\end{theorem}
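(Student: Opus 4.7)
The plan is to follow the classical route for the Perron--Frobenius theorem, adapted to irreducible non-negative matrices, working on the standard simplex $\Delta = \{x\in\mathbb{R}^N : x_i\geq 0,\ \sum x_i = 1\}$. First I would establish existence of a non-negative eigenvector by a compactness/fixed-point argument: define $r(x) = \min_{x_i>0} (Ax)_i/x_i$ on $\Delta$, show $r$ is upper semicontinuous, and set $\rho = \sup_{x\in\Delta} r(x)$. Any maximizer $v$ satisfies $Av \geq \rho v$; if the inequality were strict in some coordinate, applying the strictly positive matrix $(I+A)^{N-1}$ (strictly positive precisely because $A$ is irreducible, since for every $(i,j)$ some $A^k$ connects $i$ to $j$ with $k\leq N-1$) would yield $(I+A)^{N-1}(Av-\rho v) > 0$, contradicting maximality. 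Hence $Av=\rho v$, and applying $(I+A)^{N-1}$ to $v$ again forces $v>0$ coordinatewise. The same argument applied to $A^t$ produces a positive left eigenvector $u$ for the same eigenvalue.

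Next I would show that $\rho$ is the unique eigenvalue admitting a non-negative eigenvector. If $Aw=\mu w$ with $w\geq 0$, $w\neq 0$, pair with the positive left eigenvector $u$: $uAw = \mu (u\cdot w) = \rho (u\cdot w)$, and since $u\cdot w > 0$ one gets $\mu=\rho$. For algebraic simplicity, suppose $v$ and $w$ are both right eigenvectors for $\rho$; choose $t$ so that $v-tw$ has a zero coordinate but is not identically zero --- since $v-tw$ would still be an eigenvector, the argument of the previous step (irreducibility forces a non-trivial non-negative eigenvector to be strictly positive) fails unless $v = tw$, giving geometric simplicity. Algebraic simplicity then follows by checking that there is no generalized eigenvector: a Jordan chain would yield a vector $z$ with $(A-\rho I)z=v>0$, and pairing with $u$ gives $0 = u(A-\rho I)z = u\cdot v > 0$, a contradiction. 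For the bound $|\lambda|\leq \rho$ on other eigenvalues, if $Az=\lambda z$ take absolute values entry-wise: $|\lambda||z|\leq A|z|$, and the Collatz--Wielandt characterization of $\rho$ as $\sup_{x\geq 0} r(x)$ gives $|\lambda|\leq \rho$.

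For part (ii), assume $A$ is primitive, i.e., some $A^{k_0}$ is strictly positive. Then every eigenvalue of $A^{k_0}$ distinct from $\rho^{k_0}$ has strictly smaller modulus: indeed equality $|\lambda|=\rho$ combined with $A^{k_0}|z| = \rho^{k_0}|z|$ and strict positivity of $A^{k_0}$ forces $|z|$ to be a positive multiple of the Perron vector, which then forces $z$ to be a scalar multiple of it as well (otherwise the triangle inequality is strict somewhere and propagates). This gives $|\lambda|<\rho$ strictly. For the asymptotic formula, decompose $A = \rho P + Q$ where $P = v u^t/(u\cdot v)$ is the rank-one spectral projection onto the Perron eigenspace and $Q$ has spectral radius $\rho'<\rho$ by the previous step; then $A^k/\rho^k = P + (Q/\rho)^k \to P$ entry-wise, and reading off the $(i,j)$ entry (with the normalization $u\cdot v=1$) gives $(A^k)_{i,j}/\rho^k \to v_i u_j$.

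The main delicate point is the simplicity assertion together with the positivity of eigenvectors, both of which rest critically on irreducibility entering through the strict positivity of $(I+A)^{N-1}$; once that lever is in place, the rest of (i) is essentially a pairing/contradiction argument, and (ii) reduces to a spectral decomposition once strict inequality $|\lambda|<\rho$ is established via the triangle-inequality analysis for the primitive case.
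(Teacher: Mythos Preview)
The paper does not prove this theorem at all: it is stated as the classical Perron--Frobenius theorem and simply cited from the references (Gantmacher, Horn--Johnson, Lind--Marcus), then used as a black box in the subsequent arguments. Your proposal is a correct outline of the standard Collatz--Wielandt/compactness proof together with the spectral-projection argument for the primitive case, so there is nothing to compare against in the paper itself.
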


The following results of the structures of irreducible and reducible matrices are very useful, see \cite{Horn1990 Matrix analysis}.

\begin{theorem}\label{theorem 4.2}
	\item[(i)]If $A$ is an irreducible $N\times N$ matrix, then there is positive integer $p\geq 1$ and a permutation $P$ such that
	\begin{equation}\label{4.5}
	PAP^t=\left[  \begin{matrix}
	O&B_1&O&\cdots&O\\
	&O&&&\vdots\\
	&&\ddots&&O\\
	&&&O&B_{p-1}\\
	B_p&&&&O
	\end{matrix}  \right]
	\end{equation} 
	and
	\begin{equation}\label{4.6}
	PA^{kp}P^t=\left[
	\begin{matrix}
	A_1^k&&O\\
	&\ddots&\\
	O&& A_p^k
	\end{matrix}
	\right],
	\end{equation}
	where $A_i=B_i \cdots B_p B_1 \cdots B_{i-1}$ and $A_i$ is primitive for any $1\leq i \leq p$.
	\item[(ii)]If $A$ is a reducible $N\times N$ matrix then under a permutation, $A$ can be reduced to a normal reducible form : 
	\begin{equation}\label{4.7}
	A=\left[
	\begin{matrix}
	A_1&O&\cdots&&&&&&O\\
	O&A_2&O&\cdots&&&&&O\\
	\vdots&&&&&&&&\vdots\\
	O&\cdots&&O&A_g&O&\cdots&&O\\
	A_{g+1,1}&\cdots&&&A_{g+1,g}&A_{g+1}&O&\cdots&O\\
	&&&&&&&&\\
	A_{s,1}&\cdots&&&A_{s,g}&\cdots&&A_{s,s-1}&A_{s}	
	\end{matrix}
	\right],
	\end{equation}
	where $1\leq g \leq s$, $A_i$, $ 1\leq i \leq g$ are irreducible matrices, and
	\begin{equation}
	\rho(A)= \max_{1\leq i \leq s} \rho(A_i).
	\end{equation}	
\end{theorem}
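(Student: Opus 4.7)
The plan is to prove both parts via the directed graph $G(A)$ associated to $A$, where there is an edge $i \to j$ whenever $a_{i,j} > 0$. Part (i) is the classical Frobenius cyclic decomposition, and part (ii) is the condensation of $G(A)$ into its strongly connected components.

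For part (i), first I would introduce the period $p$ of the irreducible matrix $A$, defined as the greatest common divisor of the lengths of all closed walks in $G(A)$ (equivalently, $p = \gcd\{k \geq 1 : (A^k)_{i,i} > 0\}$, which is independent of the chosen vertex $i$ by irreducibility). Fix a base vertex $v_0$; for each vertex $v$, pick any walk from $v_0$ to $v$ and define its class $c(v) \in \mathbb{Z}/p\mathbb{Z}$ to be the length modulo $p$. Using irreducibility and the definition of $p$, one checks that $c(v)$ is well-defined (any two walks from $v_0$ to $v$ have lengths congruent modulo $p$). This partitions $\{1,\dots,N\}$ into classes $C_0,\dots,C_{p-1}$, and by construction every edge of $G(A)$ goes from $C_i$ to $C_{i+1 \bmod p}$. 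Choosing the permutation $P$ that orders the vertices by class yields the block form in (\ref{4.5}), where $B_i$ encodes the edges from $C_{i-1}$ to $C_i$. Raising to the $p$-th power gives (\ref{4.6}) with $A_i = B_i \cdots B_p B_1 \cdots B_{i-1}$. Finally, the digraph of $A_i$ on $C_{i-1}$ is strongly connected (walks in $G(A)$ of length $p$ realize all within-class transitions by irreducibility) and has period $1$ (lengths of closed walks at $v_0$ are divisible by $p$ in $G(A)$, so in $G(A^p)$ they are divisible by $1$, and their gcd is indeed $1$ by definition of $p$). Hence each $A_i$ is primitive.

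For part (ii), I would consider the strongly connected components of $G(A)$ and their condensation, which is a directed acyclic graph; pick any topological ordering. Arranging the vertices so that each SCC forms a contiguous block and so that $C_\alpha$ precedes $C_\beta$ whenever there is a path from $C_\alpha$ to $C_\beta$ in the condensation produces the block lower-triangular form (\ref{4.7}), where the diagonal blocks are the restrictions of $A$ to each SCC (hence irreducible, or a $1\times 1$ zero block in the case of a singleton SCC without self-loop, which we can absorb into any neighboring irreducible block with zero off-diagonal contribution). The first $g$ diagonal blocks are the ``sources'' of the condensation (SCCs with no incoming edges from other SCCs). The spectral radius identity $\rho(A) = \max_i \rho(A_i)$ follows immediately because the characteristic polynomial of a block triangular matrix factors as the product of the characteristic polynomials of the diagonal blocks, so $\mathrm{spec}(A) = \bigcup_i \mathrm{spec}(A_i)$.

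The main obstacle I anticipate is the well-definedness of the class function $c(v)$ in part (i): one has to verify that for any two walks $w_1, w_2$ from $v_0$ to $v$, their lengths satisfy $|w_1| \equiv |w_2| \pmod p$. The standard argument is to choose a return walk $w'$ from $v$ back to $v_0$ (which exists by irreducibility); then $w_1 w'$ and $w_2 w'$ are closed walks at $v_0$, so their lengths are multiples of $p$, forcing $|w_1| \equiv |w_2| \pmod p$. Once this is in hand, the rest of part (i) is bookkeeping with permutations and Kronecker structure, and part (ii) is essentially linear algebra on block triangular matrices.
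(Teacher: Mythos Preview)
The paper does not prove this theorem at all: it is stated as a classical fact with a reference to Horn and Johnson's \emph{Matrix Analysis}, and is then used as a black box in the subsequent arguments (notably in Theorem~\ref{thm 4.3}). Your sketch is correct and is precisely the standard graph-theoretic argument that those references give, so there is no discrepancy in approach to discuss.

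Two small points on your write-up. In part (i), when you argue that each $A_i$ is irreducible, be careful with the phrase ``walks in $G(A)$ of length $p$ realize all within-class transitions'': what you need is that for any $u,v$ in the same class there is a walk in $G(A)$ from $u$ to $v$ of length divisible by $p$ (not necessarily equal to $p$), which is automatic since $u$ and $v$ lie in the same residue class. In part (ii), the paper's statement only asserts that $A_1,\dots,A_g$ are irreducible, not all $A_1,\dots,A_s$; your SCC argument actually gives that every diagonal block is either irreducible or a $1\times 1$ zero, which is slightly stronger and matches the textbook versions. Your handling of the spectral radius via the factorization of the characteristic polynomial of a block-triangular matrix is exactly the right way to get $\rho(A)=\max_i\rho(A_i)$.
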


From Theorems \ref{theorem 4.1} and \ref{theorem 4.2}, we can obtain the asymptotic result for $(A^k)_{i,j}$ as $k\rightarrow \infty$.

\begin{theorem}\label{thm 4.3}
	Let $A=[a_{i,j}]_{N \times N}$ be an irreducible non-negative integral matrix with maximum eigenvalue $\rho=\rho(A)>0$ and cycle $p\geq 1$. Then for any $1\leq i,j \leq N$, there exists a $k=k(i,j)$ such that 
	\begin{equation}\label{4.*}
	c_1 \rho^{\alpha p} \leq (A^{\alpha p+k})_{i,j} \leq d_1 \rho^{\alpha p}
	\end{equation}
	for large $\alpha \geq 1$ where $0< c_1 \leq d_1$ are positive constants only depend on $A$.
	
	Furthermore,
	\begin{equation}\label{4.9}
	\limsup_{k\rightarrow \infty} \frac{1}{k}\log \left| A^k \right|=\log \rho(A)
	\end{equation}
	and
	\begin{equation}\label{4.10}
	\limsup_{k\rightarrow \infty} \frac{1}{k}\log tr(A^k)= \log \rho(A).
	\end{equation}
\end{theorem}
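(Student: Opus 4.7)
The plan is to reduce the statement to the primitive case via the cyclic decomposition in Theorem~\ref{theorem 4.2}(i), and then extract the required entry-wise asymptotics from the primitive Perron-Frobenius limit in Theorem~\ref{theorem 4.1}(ii). First I would invoke Theorem~\ref{theorem 4.2}(i) to partition $\{1,\ldots,N\}$ (after the permutation $P$) into $p$ cyclic classes $I_1,\ldots,I_p$ on which $A$ acts as a cyclic shift, so that $PA^p P^t$ is block-diagonal with primitive blocks $A_1,\ldots,A_p$. Because the $A_\ell$ differ only by cyclic reordering of the factors $B_i$, they share the same non-zero spectrum (via the standard $MN\sim NM$ identity for rectangular factors); and since the eigenvalues of $A^p$ are the $p$-th powers of those of $A$, this common spectral radius must equal $\rho(A)^p$.

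For the two-sided bound (\ref{4.*}), I would fix $i\in I_\ell$ and $j\in I_m$ and use irreducibility to pick $k=k(i,j)$ with $(A^k)_{i,j}>0$; any such $k$ must satisfy $k\equiv m-\ell\pmod p$. Splitting
\[(A^{\alpha p+k})_{i,j}=\sum_{s\in I_m}(A^k)_{i,s}\,(A^{\alpha p})_{s,j},\]
I would then apply the primitive limit (\ref{4.3}) to the block $A_m$: each ratio $(A^{\alpha p})_{s,j}/\rho(A)^{\alpha p}=(A_m^\alpha)_{s,j}/(\rho(A)^p)^\alpha$ converges to the strictly positive number $v_s u_j$. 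Hence the finite sum behaves like $\rho(A)^{\alpha p}\,u_j \sum_{s\in I_m}(A^k)_{i,s}\,v_s$, which is strictly positive because the $s=j$ term alone contributes $(A^k)_{i,j}\,v_j>0$. Choosing $c_1$ strictly below and $d_1$ strictly above this positive limit yields the desired inequalities for all sufficiently large $\alpha$.

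For (\ref{4.9}) the upper bound is Gelfand's formula: $\|A^k\|^{1/k}\to\rho(A)$ forces $|A^k|\le N^2\|A^k\|\le N^2(\rho(A)+\varepsilon)^k$ for large $k$, so $\limsup\frac{1}{k}\log|A^k|\le\log\rho(A)$. The matching lower bound is immediate from (\ref{4.*}): for any fixed $(i,j)$ and the subsequence $k_\alpha=\alpha p+k(i,j)$ one has $|A^{k_\alpha}|\ge(A^{k_\alpha})_{i,j}\ge c_1\rho(A)^{\alpha p}$, and dividing by $k_\alpha$ produces $\log\rho(A)$ in the limit. Statement (\ref{4.10}) is handled analogously: $\mathrm{tr}(A^k)\le|A^k|$ gives the upper bound, while for any index $i\in I_\ell$ the equality $(A^{\alpha p})_{i,i}=(A_\ell^\alpha)_{i,i}\ge c'\rho(A)^{\alpha p}$ from (\ref{4.3}) yields $\mathrm{tr}(A^{\alpha p})\ge c'\rho(A)^{\alpha p}$, providing the matching lower bound on the subsequence $k=\alpha p$.

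The main obstacle I anticipate is the cyclic bookkeeping: one must verify that the exponent shift $k(i,j)$ lands in the correct residue class modulo $p$ so that the primitive asymptotics of (\ref{4.3}) apply inside $A_m$, and that the Perron eigenvectors pulled from different blocks $A_\ell$ really correspond to the same eigenvalue $\rho(A)^p$. Once that bookkeeping is in place, the remainder of the argument is a single uniform application of the primitive-case Perron-Frobenius limit.
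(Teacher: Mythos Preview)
Your proposal is correct and follows essentially the same strategy as the paper: both reduce to the primitive case via the cyclic block decomposition of Theorem~\ref{theorem 4.2}(i) and then invoke the Perron--Frobenius limit (\ref{4.3}) on the primitive diagonal blocks to extract the two-sided bound (\ref{4.*}). The only cosmetic differences are that the paper factors $A^{\alpha p+k}=A^{\alpha p}A^{k}$ (applying (\ref{4.3}) to the left factor) whereas you factor $A^{k}A^{\alpha p}$, and the paper simply asserts that (\ref{4.9}) and (\ref{4.10}) ``follow easily from (\ref{4.*})'' while you spell out the upper bounds via Gelfand's formula.
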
	

\begin{proof}
	From (\ref{4.6}), 
	\begin{equation*}
	A^{\alpha p+k}= P (\overline{A})^\alpha P^t A^k
	\end{equation*}
	where 
	\begin{equation*}
	\overline{A}=\left[
	\begin{matrix}
	A_1&&O\\
	&\ddots&\\
	O&& A_p
	\end{matrix}
	\right], P=[P_{i,j}]_{N \times N}, P^t=[P^t_{i,j}]_{N\times N},
	\end{equation*}
	and $A_k$ is primitive and $\rho(A_k)=\rho(A)$ for all $1\leq k \leq p $.
	
	For each $1\leq k \leq p $, let $v_k=(v_{k,1}, ... , v_{k,N_k})$ and $u_k=(u_{k,1}, ..., u_{k,N_k})$ are right and left positive eigenvectors of $A_k$. Note that $\sum_{k=1}^p N_k =N$.
	
	Then (\ref{4.3}) implies
	\begin{equation}\label{4.**}
	\lim_{\alpha \rightarrow \infty} \frac{(A_k^{\alpha})_{s,t}}{\rho^{\alpha p}}= v_{k,s} u_{k,t},
	\end{equation}
	for $1\leq s,t \leq N_k$. 
	
	Then
	\begin{equation}\label{4.***}
	(A^{\alpha p+\beta})_{i,j}=\sum_{i_2,i_3=1}^{N} P_{i i_2} (\overline{A}^\alpha)_{i_2 ,i_2} P^t_{i_2, i_3} (A^\beta)_{i_3, j}.
	\end{equation}	
	
	Since $P$ is a permutation, there is a unique $i_2$ such that 
	\begin{equation}\label{4.****}
	P_{i, i_2}=1.
	\end{equation}
	For large $\alpha$, (\ref{4.**}) implies 
	\begin{equation}\label{4.*****}
	\lim_{\alpha \rightarrow \infty} \frac{(\overline{A}^\alpha)_{i_2, i_2}}{\rho^{\alpha p}}=v_{k,s} u_{k,s}
	\end{equation}
	for some $k \geq 1$ and $s \geq 1$.
	
	Since $P^t$ is permutation, there exists a unique $i_3$ such that $P^t_{i_2 ,i_3}=1$. Therefore, choose $k \geq 1$ such that 
	\begin{equation}\label{4.******}
	(A^k)_{i j}\geq 1,
	\end{equation}
	where $k$ only depends on $i$ and $j$.
	
	By (\ref{4.***}), (\ref{4.****}), (\ref{4.*****}) and (\ref{4.******}), there are constants $0< c_1 \leq d_1$ only depend on $A_k, 1\leq k \leq p$ (and so $A$), such that
	\begin{equation*}
	c_1 \rho^{\alpha p} \leq (A^{\alpha p+k})_{i,j} \leq d_1 \rho^{\alpha p}.
	\end{equation*}
	The proof of (\ref{4.*}) is complete. (\ref{4.9}) and (\ref{4.10}) follow easily from (\ref{4.*}).
\end{proof}

The following lemma concerning the double limit and iterated limits of double sequences are useful.

\begin{lemma}\label{lemma 4.4}
	For any double sequence $a_{m,n}$, we have
	\begin{equation}\label{4.11}
	\limsup_{m\rightarrow\infty} \left(\limsup_{n\rightarrow\infty}   a_{m,n} \right) \leq \limsup_{(m,n)\rightarrow\infty} a_{m,n},
	\end{equation}
	and
	\begin{equation}\label{4.12}
	\limsup_{n\rightarrow\infty} \left(\limsup_{m\rightarrow\infty}   a_{m,n} \right) \leq \limsup_{(m,n)\rightarrow\infty} a_{m,n}.
	\end{equation}
	In the case that the double limit exists, then the equalities in (\ref{4.11}) and (\ref{4.12}) hold.
\end{lemma}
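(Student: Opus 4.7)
The plan is to reduce Lemma~\ref{lemma 4.4} to the standard characterization of the double $\limsup$ as an infimum of tail suprema,
\[
\limsup_{(m,n)\to\infty} a_{m,n} \;=\; \inf_{N\geq 1}\, \sup_{m,n\geq N} a_{m,n},
\]
and then unwind the iterated $\limsup$ against this formula. No deep matrix or combinatorial input is needed; the whole statement is a general fact about double sequences.

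\textbf{Step 1 (inequality (4.11)).} Set $L=\limsup_{(m,n)\to\infty} a_{m,n}$ and let $\epsilon>0$. By the infimum characterization there exists $N$ with $\sup_{m,n\geq N}a_{m,n}\leq L+\epsilon$. For every fixed $m\geq N$ this gives
\[
\limsup_{n\to\infty} a_{m,n} \;\leq\; \sup_{n\geq N} a_{m,n} \;\leq\; L+\epsilon .
\]
Taking the outer $\limsup$ over $m$ yields $\limsup_{m\to\infty}\limsup_{n\to\infty} a_{m,n} \leq L+\epsilon$, and letting $\epsilon\to 0$ proves (4.11).

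\textbf{Step 2 (inequality (4.12)).} Interchange the roles of $m$ and $n$ in Step~1; the argument is formally identical because the tail window $\{m,n\geq N\}$ is symmetric in the two indices.

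\textbf{Step 3 (equality when the double limit exists).} Suppose $\lim_{(m,n)\to\infty} a_{m,n}=L$. Fix $\epsilon>0$ and choose $N$ so that $|a_{m,n}-L|<\epsilon$ whenever $m,n\geq N$. For each fixed $m\geq N$ this pinches
\[
L-\epsilon \;\leq\; \liminf_{n\to\infty} a_{m,n} \;\leq\; \limsup_{n\to\infty} a_{m,n} \;\leq\; L+\epsilon,
\]
and then the outer $\limsup$ over $m\geq N$ lies in the same interval $[L-\epsilon,L+\epsilon]$. Combined with the upper bound from Step~1 (which already gives $\leq L$), letting $\epsilon\to 0$ forces equality in (4.11). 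Equality in (4.12) is obtained analogously.

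\textbf{Main obstacle.} There is essentially no serious obstacle: the lemma is a textbook fact about double sequences. The only point requiring a little care is the interpretation of ``the double limit exists,'' which I take to mean convergence in the Moore--Smith sense ($\forall\epsilon\,\exists N$ with $|a_{m,n}-L|<\epsilon$ for all $m,n\geq N$), not merely the coincidence of the two iterated limits; under the weaker reading equality can fail, so this distinction must be stated explicitly if not already clear from context.
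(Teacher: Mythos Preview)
Your proof is correct. The paper itself states Lemma~\ref{lemma 4.4} without proof, treating it as a standard fact about double sequences; your argument via the tail-supremum characterization $\limsup_{(m,n)\to\infty}a_{m,n}=\inf_{N}\sup_{m,n\geq N}a_{m,n}$ is the natural way to fill this in, and your reading of ``the double limit exists'' in the Moore--Smith sense is the intended one (and is indeed how the paper uses the lemma, e.g.\ in the proof of Lemma~\ref{lemma 4.6-1}).
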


The following results for ${\bf H}_n$ and ${\bf T}_n$ (or ${\bf V}_m$ and $\hat{\bf T}_m$) are useful in the study of entropy and periodic entropy.
\begin{lemma}\label{lemma 4.5}
	For any basic set of admissible local patterns $\mathcal{B}$, 
	\begin{equation}\label{4.13}
	tr({\bf H}_n^m(\mathcal{B}))=\left|   {\bf T}_m^{n-1} (\mathcal{B})  \right|.
	\end{equation}
\end{lemma}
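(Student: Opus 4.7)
The plan is to prove the identity by showing that both $\operatorname{tr}({\bf H}_n^m(\mathcal{B}))$ and $|{\bf T}_m^{n-1}(\mathcal{B})|$ count the same combinatorial set, namely the admissible patterns on the cylinder $(\mathbb{Z}/m\mathbb{Z}) \times \{1,\ldots,n\}$ -- equivalently, horizontally $m$-periodic admissible patterns of height $n$. Once this double-counting identification is made, the equality follows immediately.

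First I would unpack the left-hand side. By the construction of ${\bf H}_n(\mathcal{B})$ as a horizontal transition matrix whose rows and columns index height-$n$ symbol columns, with $({\bf H}_n)_{i,j}=1$ exactly when the adjacent column pair $(i,j)$ is admissible in the sense of $\mathcal{B}$, the entry $({\bf H}_n^m)_{i,j}$ counts admissible $(m+1)\times n$ patterns whose leftmost column is $i$ and rightmost column is $j$. Hence $\operatorname{tr}({\bf H}_n^m(\mathcal{B}))$ counts admissible width-$(m+1)$, height-$n$ patterns whose first and last columns coincide; identifying these equal columns yields precisely the horizontally $m$-periodic admissible patterns of height $n$.

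Next I would analyse the right-hand side. By the construction in~(\ref{T_n}), the matrix ${\bf T}_m(\mathcal{B})$ is indexed by length-$m$ periodic rows, and $({\bf T}_m)_{r,s}=1$ exactly when $r$ and $s$ stack to form an admissible cylindrical pattern of height $2$ and period $m$. Consequently $({\bf T}_m^{n-1})_{r,s}$ equals the number of admissible horizontally $m$-periodic, height-$n$ patterns with bottom row $r$ and top row $s$, so summing over all indices $(r,s)$ gives $|{\bf T}_m^{n-1}(\mathcal{B})|$ as the total count of such cylindrical patterns. Comparing with the left-hand side yields~(\ref{4.13}).

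The main care needed is in matching the indexing conventions: one must verify that the height-$n$ columns labelling ${\bf H}_n$ and the $m$-periodic rows labelling ${\bf T}_m$ are used consistently with the correct orientation of adjacency (left--right for ${\bf H}_n$, bottom--top for ${\bf T}_m$), so that the two enumerations attach to the same cylindrical pattern set. I do not foresee any genuine obstacle beyond this bookkeeping; the argument is essentially a combinatorial double-count derived directly from the definitions and from the recursive constructions~(\ref{H_n+1}) and~(\ref{T_n}).
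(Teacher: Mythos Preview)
Your proposal is correct and follows exactly the same approach as the paper: both sides of the identity are identified as the cardinality of the set of $\mathcal{B}$-admissible, horizontally $m$-periodic patterns of height $n$. The paper's proof is in fact just this one-sentence double count, and your version simply spells out the two interpretations in more detail.
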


\begin{proof}
	Both sides of (\ref{4.13}) are the cardinal numbers of $m\times n$ $\mathcal{B}$-admissible patterns which are horizontal periodic with period $m$ and height $n$. The proof is complete.
\end{proof}

Furthermore, we have the following results.
\begin{lemma}\label{lemma 4.6-1}
	For any basic set of admissible local patterns $\mathcal{B}$, 
	\begin{equation}\label{4.18-1}
	h(\mathcal{B})=\limsup_{(m,n)\rightarrow\infty} \frac{1}{mn}\log \left| {\bf T}_m^n(\mathcal{B}) \right|.
	\end{equation}
\end{lemma}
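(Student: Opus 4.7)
The plan is to exploit the bridge identity $tr({\bf H}_{n+1}^{m}(\mathcal{B}))=|{\bf T}_m^{n}(\mathcal{B})|$ from Lemma \ref{lemma 4.5} and then to sandwich $\frac{1}{mn}\log tr({\bf H}_{n+1}^{m})$ between quantities controlled by $\rho({\bf H}_{n+1})$, using the Perron--Frobenius formulation $h(\mathcal{B})=\limsup_{n\to\infty}\tfrac{1}{n}\log\rho({\bf H}_n)$ already recorded right after (\ref{2.11}). Thus the target reduces to showing
\[
\limsup_{(m,n)\to\infty}\frac{1}{mn}\log tr({\bf H}_{n+1}^{m}(\mathcal{B}))
=\limsup_{n\to\infty}\frac{1}{n}\log\rho({\bf H}_{n+1}(\mathcal{B})).
\]

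For the upper bound I would use the standard inequality $tr(A^{m})\le N_{A}\,\rho(A)^{m}$, where $N_{A}$ is the size of $A$. Since ${\bf H}_{n+1}$ has size of order $r^{n+1}$, $\log N_{n+1}=O(n)$, so
\[
\frac{1}{mn}\log tr({\bf H}_{n+1}^{m})\;\le\;\frac{O(n)}{mn}+\frac{1}{n}\log\rho({\bf H}_{n+1}),
\]
and the first term vanishes as $(m,n)\to\infty$ while the $\limsup$ of the second term is $h(\mathcal{B})$.

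For the lower bound, fix $\varepsilon>0$ and, by definition of $\limsup$, choose arbitrarily large $n$ with $\tfrac{1}{n}\log\rho({\bf H}_{n+1})>h(\mathcal{B})-\varepsilon$. For each such $n$ I need $\tfrac{1}{m}\log tr({\bf H}_{n+1}^{m})$ to get close to $\log\rho({\bf H}_{n+1})$ for some arbitrarily large $m$; this is where the possible reducibility of ${\bf H}_{n+1}$ is the main obstacle, since Theorem \ref{thm 4.3} (in particular (\ref{4.10})) is stated for irreducible matrices. I would handle it by invoking the normal reducible form (Theorem \ref{theorem 4.2}(ii)), writing ${\bf H}_{n+1}$ as a block lower-triangular matrix whose diagonal blocks $A_{1},\dots,A_{s}$ are irreducible, and selecting a block $A_{i_{0}}$ with $\rho(A_{i_{0}})=\rho({\bf H}_{n+1})$. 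Since the trace of a block-triangular matrix is the sum of traces of its diagonal blocks and each such trace is nonnegative, $tr({\bf H}_{n+1}^{m})\ge tr(A_{i_{0}}^{m})$, and (\ref{4.10}) applied to $A_{i_{0}}$ yields arbitrarily large $m$ with $\tfrac{1}{m}\log tr({\bf H}_{n+1}^{m})>\log\rho({\bf H}_{n+1})-\varepsilon$.

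Combining the two choices gives pairs $(m,n)$ with $m,n$ arbitrarily large and
\[
\frac{1}{mn}\log tr({\bf H}_{n+1}^{m})>\frac{\log\rho({\bf H}_{n+1})-\varepsilon}{n}>h(\mathcal{B})-\varepsilon-\frac{\varepsilon}{n},
\]
so the $\limsup$ is at least $h(\mathcal{B})-2\varepsilon$. Letting $\varepsilon\to 0$ and combining with the upper bound completes the proof. The key subtle point is the joint double limit $(m,n)\to\infty$: one must be careful that the choice of $m$ in the lower bound is allowed to depend on $n$ (which is fine for $\limsup$), and that the prefactor $\log N_{n+1}=O(n)$ in the upper bound is absorbed because it is divided by $mn$.
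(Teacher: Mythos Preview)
Your proof is correct and shares the same skeleton as the paper's: both use the bridge identity $|{\bf T}_m^n|=tr({\bf H}_{n+1}^m)$ from Lemma~\ref{lemma 4.5} and the Perron--Frobenius formula $h(\mathcal{B})=\limsup_{n}\frac{1}{n}\log\rho({\bf H}_n)$, and for the lower bound both pass to the iterated $\limsup$ and use $\limsup_{m}\frac{1}{m}\log tr({\bf H}_{n+1}^m)=\log\rho({\bf H}_{n+1})$. Your treatment of this last step is in fact more careful than the paper's, since you explicitly handle the reducible case via the block-triangular normal form of Theorem~\ref{theorem 4.2}(ii) before invoking (\ref{4.10}).

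The one place where the paper is noticeably simpler is the upper bound. You bound $tr({\bf H}_{n+1}^m)\le N_{n+1}\,\rho({\bf H}_{n+1})^m$ and then argue that the $\frac{\log N_{n+1}}{mn}$ term vanishes; the paper instead uses the trivial inequality $tr({\bf H}_{n+1}^m)\le |{\bf H}_{n+1}^m|$ (since all entries are nonnegative) and then reads off $\limsup_{(m,n)}\frac{1}{mn}\log|{\bf H}_{n+1}^m|=h(\mathcal{B})$ directly from the definition (\ref{2.11}). This avoids any spectral estimate for the upper bound and makes that half of the argument a one-liner.
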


\begin{proof}
	Since the double limit of $h(\mathcal{B})$ holds in (\ref{2.11}) and equals to iterated limits, we have
	\begin{equation}\label{4.20}
	\begin{aligned}
	h(\mathcal{B})&=	\lim_{(m,n)\rightarrow\infty} \frac{1}{mn}\log \left|{\bf H}_n^{m-1}(\mathcal{B})\right|\\
	&=	\lim_{n\rightarrow\infty}\frac{1}{n} \left( \lim_{n\rightarrow\infty}\frac{1}{m}\log \left|{\bf H}_n^{m-1}(\mathcal{B})\right|  \right)\\
	&=	\limsup_{n\rightarrow\infty}\frac{1}{n} \log \rho\left( {\bf H}_n(\mathcal{B}) \right).
	\end{aligned}
	\end{equation}	
	On the other hand, by (\ref{4.13}) of Lemma \ref{lemma 4.5}, we have
	\begin{equation*}
	\begin{aligned}
	\limsup_{(m,n)\rightarrow\infty} \frac{1}{mn}\log \left| {\bf T}_m^n(\mathcal{B}) \right|&= \limsup_{(m,n)\rightarrow\infty} \frac{1}{mn}\log tr\left( {\bf H}_{n+1}^m(\mathcal{B}) \right)\\
	&\geq \limsup_{n\rightarrow\infty}\frac{1}{n} \left( \lim_{m\rightarrow\infty}\frac{1}{m}\log tr\left( {\bf H}_{n+1}^m(\mathcal{B}) \right)   \right)\\
	&=	\limsup_{n\rightarrow\infty}\frac{1}{n} \log \rho\left( {\bf H}_{n+1}(\mathcal{B}) \right)\\
	&=	\limsup_{n\rightarrow\infty}\frac{1}{n+1} \log \rho\left( {\bf H}_{n+1}(\mathcal{B}) \right).
	\end{aligned}
	\end{equation*}
	Therefore, we have
	\begin{equation}\label{4.21}
	h(\mathcal{B})\leq\limsup_{(m,n)\rightarrow\infty} \frac{1}{mn}\log \left| {\bf T}_m^n(\mathcal{B}) \right|.
	\end{equation}
	By (\ref{2.11}) and (\ref{4.15}), the equality holds in (\ref{4.21}). That is, (\ref{4.20}) holds. The proof is complete.
\end{proof}
For any basic set of admissible patterns $\mathcal{B}$, denote by
\begin{equation}
h_*(\mathcal{B})=\limsup_{m \rightarrow \infty}\frac{1}{m}\log\rho({\bf T}_m(\mathcal{B})).
\end{equation}
Then we have the following results.
\begin{lemma}\label{lemma 4.6}
	For any basic set of admissible local patterns $\mathcal{B}$, 
	\begin{equation}\label{4.16}
	h_*(\mathcal{B}) \leq h_p(\mathcal{B})\leq h(\mathcal{B}).
	\end{equation}
\end{lemma}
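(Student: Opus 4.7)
The inequality splits cleanly into two parts, and each uses a different piece of the machinery just developed.

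The right-hand inequality $h_p(\mathcal{B})\leq h(\mathcal{B})$ is already available essentially for free: it is the inequality (\ref{4}) recorded earlier, which follows at once from the inclusion $\mathcal{P}(\mathcal{B})\subseteq \Sigma(\mathcal{B})$ since every periodic admissible pattern counted on the right-hand side of (\ref{3}) is one of the patterns counted on the right-hand side of (\ref{1}). So I would simply cite (\ref{4}) for this half.

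For the left-hand inequality $h_*(\mathcal{B})\leq h_p(\mathcal{B})$, the plan is to feed the trace identity from Proposition \ref{gamma n l o k } into the $\limsup$ defining $h_p(\mathcal{B})$ and then use Theorem \ref{thm 4.3} to recognize $\rho({\bf T}_m(\mathcal{B}))$ inside it. Concretely, taking $\ell=0$ in Proposition \ref{gamma n l o k } gives ${\bf R}_m^0=I$ and
\begin{equation*}
\Gamma_{\mathcal{B}}\left(\left[\begin{matrix} m & 0 \\ 0 & k\end{matrix}\right]\right)=\mathrm{tr}\bigl({\bf T}_m^k(\mathcal{B})\bigr).
\end{equation*}
Since the outer supremum in (\ref{3}) is at least the $\ell=0$ value, I obtain
\begin{equation*}
h_p(\mathcal{B})\;\geq\;\limsup_{(m,k)\to\infty}\frac{1}{mk}\log\mathrm{tr}\bigl({\bf T}_m^k(\mathcal{B})\bigr).
\end{equation*}
Now, applying Lemma \ref{lemma 4.4} to the double sequence $a_{m,k}=\frac{1}{mk}\log\mathrm{tr}({\bf T}_m^k(\mathcal{B}))$ replaces the double $\limsup$ by the iterated one:
\begin{equation*}
\limsup_{(m,k)\to\infty}\frac{1}{mk}\log\mathrm{tr}({\bf T}_m^k(\mathcal{B}))\;\geq\;\limsup_{m\to\infty}\frac{1}{m}\Bigl(\limsup_{k\to\infty}\frac{1}{k}\log\mathrm{tr}({\bf T}_m^k(\mathcal{B}))\Bigr).
\end{equation*}
Finally, for each fixed $m$ the inner $\limsup$ equals $\log\rho({\bf T}_m(\mathcal{B}))$ by (\ref{4.10}) of Theorem \ref{thm 4.3}, so the right-hand side is exactly $h_*(\mathcal{B})$.

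The one subtlety, and the step I would treat as the main obstacle, is that (\ref{4.10}) is stated for an irreducible matrix, whereas ${\bf T}_m(\mathcal{B})$ need not be irreducible in general. I would dispose of this by invoking the normal reducible form from Theorem \ref{theorem 4.2}(ii): writing ${\bf T}_m(\mathcal{B})$ in block lower-triangular form with irreducible diagonal blocks $A_1,\dots,A_s$, the diagonal blocks of ${\bf T}_m^k(\mathcal{B})$ are $A_1^k,\dots,A_s^k$, so $\mathrm{tr}({\bf T}_m^k(\mathcal{B}))=\sum_{i=1}^s\mathrm{tr}(A_i^k)$. Applying (\ref{4.10}) to each irreducible $A_i$ and using $\rho({\bf T}_m(\mathcal{B}))=\max_i\rho(A_i)$ (also from Theorem \ref{theorem 4.2}(ii)) yields $\limsup_{k\to\infty}\frac{1}{k}\log\mathrm{tr}({\bf T}_m^k(\mathcal{B}))=\log\rho({\bf T}_m(\mathcal{B}))$ in the general case, closing the argument.
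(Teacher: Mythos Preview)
Your proof is correct and follows essentially the same route as the paper for the substantive inequality $h_*(\mathcal{B})\leq h_p(\mathcal{B})$: both take $\ell=0$ in Proposition~\ref{gamma n l o k }, bound the double $\limsup$ below by the iterated one via Lemma~\ref{lemma 4.4}, and identify the inner $\limsup$ as $\log\rho({\bf T}_m(\mathcal{B}))$. You are in fact more careful than the paper in justifying this last identification when ${\bf T}_m(\mathcal{B})$ is reducible, by passing to the block lower-triangular form of Theorem~\ref{theorem 4.2}(ii); the paper simply invokes the conclusion. The only difference worth noting is that the paper also inserts a direct argument for $h_*(\mathcal{B})\leq h(\mathcal{B})$ using $|{\bf T}_m^{n-1}(\mathcal{B})|\leq |{\bf H}_n^m(\mathcal{B})|$ from Lemma~\ref{lemma 4.5}, which is redundant once $h_*\leq h_p\leq h$ is established but gives an independent check.
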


\begin{proof}
	From (\ref{4.13}), we have 
	\begin{equation}\label{4.15}
	\left|   {\bf T}_m^{n-1} (\mathcal{B})  \right|\leq \left| {\bf H}_n^m(\mathcal{B}) \right|.
	\end{equation}
	Therefore, by (\ref{4.11}) of Lemma \ref{lemma 4.4}, (\ref{4.9}) of Theorem \ref{thm 4.3} and (\ref{4.15}),
	\begin{equation*}
	\begin{aligned}
	\limsup_{m\rightarrow\infty} \frac{1}{m}\log \rho({\bf T}_m(\mathcal{B}))
	& \leq 	\limsup_{(m,n)\rightarrow\infty} \frac{1}{mn}\log \left|{\bf T}_m^{n-1}(\mathcal{B})\right| \\
	& \leq 	\limsup_{(m,n)\rightarrow\infty} \frac{1}{mn}\log \left|{\bf H}_m^{n-1}(\mathcal{B})\right| = h(\mathcal{B}).
	\end{aligned}
	\end{equation*}
	As for (\ref{4.16}),
	\begin{equation*}
	\begin{aligned}
	h_p(\mathcal{B})&=\limsup_{(m,k)\rightarrow\infty} \sup_{0\leq \ell \leq m-1} \frac{1}{mk}\log \Gamma_{\mathcal{B}}\left(  \left[  \begin{matrix}
	m&\ell \\
	0&k
	\end{matrix}  \right] \right) \\
	&=\limsup_{(m,k)\rightarrow\infty} \sup_{0\leq \ell \leq m-1} \frac{1 }{mk}\log tr\left(  {\bf T}_m^k(\mathcal{B}) {\bf R}_m^\ell  \right)\\
	&\geq \limsup_{(m,k)\rightarrow\infty}  \frac{1 }{mk}\log tr\left(  {\bf T}_m^k(\mathcal{B})  \right)\\	
	&\geq \lim_{m\rightarrow\infty} \frac{1}{m}\left( \limsup_{k\rightarrow\infty}  \frac{1}{k}\log tr\left(  {\bf T}_m^k(\mathcal{B})  \right) \right)\\	
	&=\limsup_{m\rightarrow\infty} \frac{1}{m}\log \rho({\bf T}_m(\mathcal{B})).
	\end{aligned}	
	\end{equation*}
	The proof is complete.
\end{proof}

The equality of (\ref{4.16}) does not hold for some $\mathcal{B}$ as mentioned in \cite{Culik II1996 An aperiodic set,Kari1996 A small aperiodic}. However, for some classes of basic set $\mathcal{B}$, it holds. In \cite{BLL2007 Patterns generation and spatial entropy in two dimensional lattice models}, using (\ref{4.18-1}) it has been proved that when ${\bf H}_2(\mathcal{B})$ is symmetric, then 
\begin{equation*}
    h(\mathcal{B})\leq \frac{1}{2m} \log \rho({\bf T}_{2m})
\end{equation*}
for all $m\geq 1$. Hence, $h(\mathcal{B})=h_*(\mathcal{B})$. In viewing (\ref{4.18-1}), we can introduce another classes of basic sets. We need the following notion.

\begin{definition}
	For basic set $\mathcal{B}$, the sequence of cylindrical transition matrices $\{ {\bf T}_m (\mathcal{B})  \}_{m=1}^{\infty}$ is called \emph{uniformly dominated} (or uniformly controlled or uniformly bounded) by their maximum eigenvalues $\{ \rho \left( {\bf T}_m (\mathcal{B}) \right)  \}_{m=1}^{\infty}$ if there is a positive function $c(m,k)$ such that 
	\begin{equation}\label{4.17}
	\left|  {\bf T}_m^k (\mathcal{B})  \right| \leq c(m,k) \rho ({\bf T}_m)^k
	\end{equation}
	with
	\begin{equation}\label{4.17-1}
	\lim_{(m,k) \rightarrow \infty} \frac{1}{mk} \log c(m,k)=0
	\end{equation}
	for all $m\geq 1$ and $k\geq 1$.
\end{definition}

We show that the equality holds in (\ref{4.16}) when basic set $\mathcal{B}$ satisfies the uniformly dominant properties (\ref{4.17}) and (\ref{4.17-1}).

\begin{theorem}\label{thm 4.9}
	If (\ref{4.17}) and (\ref{4.17-1}) hold, then
	\begin{equation}\label{4.18}
	h(\mathcal{B})=h_p(\mathcal{B})=h_*(\mathcal{B}).
	\end{equation}
\end{theorem}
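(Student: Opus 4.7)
The plan is to combine Lemma~\ref{lemma 4.6-1}, which rewrites $h(\mathcal{B})$ in terms of the cylindrical transition matrices as a double limit, with Lemma~\ref{lemma 4.6}, which already supplies the chain $h_*(\mathcal{B})\le h_p(\mathcal{B})\le h(\mathcal{B})$. Thus the only missing inequality is
\[
h(\mathcal{B})\le h_*(\mathcal{B}),
\]
and this should fall out directly from the uniform dominance hypotheses (\ref{4.17}) and (\ref{4.17-1}).

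The key step is the following. Starting from Lemma~\ref{lemma 4.6-1},
\[
h(\mathcal{B})=\limsup_{(m,k)\to\infty}\frac{1}{mk}\log\bigl|{\bf T}_m^{k}(\mathcal{B})\bigr|,
\]
I apply the bound $\bigl|{\bf T}_m^k(\mathcal{B})\bigr|\le c(m,k)\,\rho({\bf T}_m(\mathcal{B}))^{k}$ to obtain
\[
\frac{1}{mk}\log\bigl|{\bf T}_m^k(\mathcal{B})\bigr|\;\le\;\frac{1}{mk}\log c(m,k)\;+\;\frac{1}{m}\log\rho({\bf T}_m(\mathcal{B})).
\]
Taking $\limsup$ as $(m,k)\to\infty$, the first term vanishes by (\ref{4.17-1}). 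The second term does not depend on $k$, so its $\limsup$ along the directed set $(m,k)\to\infty$ coincides with $\limsup_{m\to\infty}\tfrac{1}{m}\log\rho({\bf T}_m(\mathcal{B}))=h_*(\mathcal{B})$. This gives $h(\mathcal{B})\le h_*(\mathcal{B})$, and together with Lemma~\ref{lemma 4.6} yields (\ref{4.18}).

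The only technical point that needs a little care is the passage from the double $\limsup$ of a sum to the sum of the $\limsup$s; this is valid here because one summand depends only on $m$ (so its double $\limsup$ reduces to an ordinary $\limsup$ in $m$) and the other summand vanishes in the limit by the sub-exponential growth assumption on $c(m,k)$. There is no real obstacle: the uniform dominance hypothesis is tailored precisely to convert the double-index growth of $|{\bf T}_m^k(\mathcal{B})|$ into the single-index growth captured by $h_*(\mathcal{B})$, and the rest is bookkeeping with $\limsup$ inequalities from Lemma~\ref{lemma 4.4}.
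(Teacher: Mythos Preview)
Your proposal is correct and follows essentially the same approach as the paper: invoke Lemma~\ref{lemma 4.6} to reduce to the single inequality $h(\mathcal{B})\le h_*(\mathcal{B})$, then use Lemma~\ref{lemma 4.6-1} together with the uniform dominance bound (\ref{4.17})--(\ref{4.17-1}) and subadditivity of $\limsup$ to conclude. Your treatment is in fact slightly more careful than the paper's, since you explicitly justify why the double $\limsup$ of the $k$-independent term $\tfrac{1}{m}\log\rho({\bf T}_m)$ reduces to $\limsup_{m\to\infty}$.
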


\begin{proof}
	By Lemma \ref{lemma 4.6}, it suffices to prove 
	\begin{equation}\label{4.19}
	h(\mathcal{B})\leq \limsup_{m\rightarrow\infty} \frac{1}{m}\log \rho({\bf T}_m(\mathcal{B})).
	\end{equation}
	Then the condition of uniform controllability (\ref{4.17}) and (\ref{4.17-1}) imply
	\begin{equation*}
	\begin{aligned}
	\limsup_{(m,n)\rightarrow\infty} \frac{1}{mn}\log \left| {\bf T}_m^n(\mathcal{B}) \right| &\leq 	
	\limsup_{(m,n)\rightarrow\infty} \frac{1}{mn}\log C(m,n) + \limsup_{(m,n)\rightarrow\infty} \frac{1}{m}\log \rho ({\bf T}_m(\mathcal{B}))\\
	&=\limsup_{m\rightarrow\infty} \frac{1}{m}\log \rho ({\bf T}_m(\mathcal{B})).
	\end{aligned}
	\end{equation*}	
	Therefore, (\ref{4.19}) holds by Lemma \ref{lemma 4.6-1} and then (\ref{4.18}) follows. The proof is complete.
\end{proof}

To study the uniformly dominant condition (\ref{4.17}), we will introduce some notion and results. We first recall the following lemma, see \cite{Lind1995 An introduction}.

\begin{lemma}\label{lemma 4.10}
	If the non-negative $N\times N$ matrix $A=[a_{i,j}]$ has positive eigenvalue $\rho(A)$ with positive eigenvector $v=(v_1,...,v_N)^t$. Then
	\begin{equation}\label{4.22}
	\frac{c}{d} N \rho(A)^k \leq \left| A^k \right| \leq 	\frac{d}{c} N \rho(A)^k,
	\end{equation}
	where
	\begin{equation}\label{4.23}
	c=\min_{i} v_i \mbox{ and } d=\max_i v_i.
	\end{equation}
\end{lemma}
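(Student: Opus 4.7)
The plan is to exploit the eigenvector equation $A^k v = \rho(A)^k v$ componentwise. Writing this out, for each row index $i$ we have
\[
\sum_{j=1}^N (A^k)_{i,j}\, v_j \;=\; \rho(A)^k\, v_i.
\]
Since all $(A^k)_{i,j}\ge 0$ and $c\le v_j\le d$ with $c,d>0$, I can sandwich the left side: $c\sum_j (A^k)_{i,j} \le \sum_j (A^k)_{i,j} v_j \le d\sum_j (A^k)_{i,j}$. Combining with the right side similarly bounded by $c\,\rho(A)^k \le \rho(A)^k v_i \le d\,\rho(A)^k$, I get
\[
\frac{c}{d}\,\rho(A)^k \;\le\; \sum_{j=1}^N (A^k)_{i,j} \;\le\; \frac{d}{c}\,\rho(A)^k
\]
for every row $i$.

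Next I would sum these row-sum bounds over all $N$ choices of $i$. Since $|A^k|$ denotes the sum of all entries of $A^k$, namely $\sum_{i,j}(A^k)_{i,j}$, summation over $i\in\{1,\dots,N\}$ yields
\[
\frac{c}{d}\, N\, \rho(A)^k \;\le\; |A^k| \;\le\; \frac{d}{c}\, N\, \rho(A)^k,
\]
which is exactly the desired inequality~(\ref{4.22}).

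There is essentially no obstacle; the argument is a one-line application of $A^k v=\rho(A)^k v$ together with the uniform bounds on the coordinates of $v$. The only small point worth confirming is the meaning of $|A^k|$ (the sum of entries, as used throughout the paper in Section~\ref{section2}), and the positivity $c>0$, which is guaranteed by the hypothesis that $v$ is a positive eigenvector. No appeal to Perron--Frobenius or to the irreducibility/primitivity of $A$ is needed; the result is a purely algebraic consequence of having one positive eigenvalue with one positive eigenvector.
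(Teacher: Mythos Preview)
Your proof is correct and follows essentially the same approach as the paper: both use the eigenvector identity $\sum_j (A^k)_{i,j} v_j = \rho(A)^k v_i$, bound each row sum via $c\le v_j\le d$ to obtain $\tfrac{c}{d}\rho(A)^k \le \sum_j (A^k)_{i,j} \le \tfrac{d}{c}\rho(A)^k$, and then sum over the $N$ rows.
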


\begin{proof}
	Since $Av=\rho v$, $A^k v = \rho^k v$. Therefore, $\sum_{j=1}^N (A^k)_{i,j} v_j = \rho^k v_i$ for all $i$. Hence, 
	\begin{equation}\label{4.24}
	c\rho^k \leq \sum_{j=1}^N (A^k)_{i,j} v_j  \leq d\rho^k
	\end{equation}
	for all $1\leq i \leq N$. From the right hand side of (\ref{4.24})
	\begin{equation*}
	\sum_{j=1}^N (A^k)_{i,j}  \leq \frac{d}{c}\rho^k.
	\end{equation*}
	Hence
	\begin{equation*}
	\left| A^k \right| \leq 	\frac{d}{c} N \rho(A)^k.
	\end{equation*}
	From the left hand side of (\ref{4.24}), we have $\frac{c}{d}\rho^k\leq 	\sum_{j=1}^N (A^k)_{i,j}$. Hence $\frac{c}{d} N \rho(A)^k \leq \left| A^k \right|$. The proof is complete.
\end{proof}

The following lemma estimates the ratio of $\frac{d}{c}$.
\begin{lemma}
	Let $N \times N$ non-negative integral matrix $A=[a_{i,j}]$ have positive eigenvalue $\rho\geq 1$ with positive eigenvector $v=(v_1,..., v_N)$. If there is a positive constant $K\geq 1$ such that for any $1\leq i,j \leq N$, there is $1\leq k \leq K$ with $k=k(i,j)$ such that 
	\begin{equation}\label{4.25}
	(A^k)_{i,j} \geq 1.
	\end{equation}
	Then 
	\begin{equation}\label{4.26}
	\frac{d}{c}:=\frac{\max_i v_i}{\min_i v_i } \leq \rho^K.
	\end{equation}
\end{lemma}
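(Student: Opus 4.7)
The plan is to exploit the eigenvalue equation $A^k v = \rho^k v$ at a pair of indices that witness the extremes of $v$, and let the connectivity hypothesis supply a single entry of $A^k$ that is at least $1$.

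First I would pick indices $i^*$ and $j^*$ with $v_{i^*}=d=\max_i v_i$ and $v_{j^*}=c=\min_i v_i$. By the hypothesis applied to the pair $(j^*,i^*)$, there exists $k$ with $1\le k\le K$ such that $(A^k)_{j^*,i^*}\ge 1$. Iterating $Av=\rho v$ gives $A^k v=\rho^k v$, i.e.
\begin{equation*}
\rho^k v_{j^*}=\sum_{j=1}^{N}(A^k)_{j^*,j}\,v_j.
\end{equation*}

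Next I would drop all terms on the right except the one with index $i^*$; since all entries of $A^k$ and all entries of $v$ are non-negative, this yields
\begin{equation*}
\rho^k v_{j^*}\;\ge\;(A^k)_{j^*,i^*}\,v_{i^*}\;\ge\;v_{i^*},
\end{equation*}
using $(A^k)_{j^*,i^*}\ge 1$ (this is where integrality of $A$ matters, since otherwise a positive entry could be arbitrarily small). Rearranging and using $\rho\ge 1$ together with $k\le K$ gives
\begin{equation*}
\frac{d}{c}=\frac{v_{i^*}}{v_{j^*}}\;\le\;\rho^k\;\le\;\rho^K,
\end{equation*}
which is the desired bound.

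There is no real obstacle here; the only subtle points are recognising that the connectivity hypothesis should be invoked for the specific pair $(j^*,i^*)$ (reading from the min-vertex to the max-vertex), that integrality of $A$ promotes ``positive'' to ``$\ge 1$'', and that $\rho\ge 1$ is needed to pass from $\rho^k$ to the uniform bound $\rho^K$. The proof is a one-line application of the Perron eigenvector identity after these observations.
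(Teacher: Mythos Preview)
Your proof is correct and is essentially the same as the paper's: both use the component identity $\rho^k v_i=\sum_\ell (A^k)_{i,\ell}v_\ell$ together with $(A^k)_{i,j}\ge 1$ to get $v_j\le \rho^k v_i\le \rho^K v_i$. The only cosmetic difference is that the paper establishes this for every pair $(i,j)$ and then reads off the ratio, whereas you go straight to the extremal indices $(j^*,i^*)$; the underlying argument is identical.
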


\begin{proof}
	For any $1 \leq i,j \leq N $, $(A^k)_{i,j} \geq 1$ and $\sum_{\ell=1}^N (A^k)_{i\ell} v_\ell = \rho^k v_i$ imply
	\begin{equation*}
	v_j \leq (A^k)_{i,j} v_j \leq \rho^k v_i	.
	\end{equation*}
	Therefore, $v_j \leq \rho^k v_i$, and then $v_j \leq \rho^K v_i$. Hence (\ref{4.26}) holds.	
\end{proof}

We introduce some concepts from graph theory. For any matrix $A=[a_{i,j}]_{N\times N}$, with $a_{i,j}\in\{ 0,1\}$, then the associated graph $G=G(A)$ is defined by the vertex set $V=\{1,2,...,N\}$ and for $i,j \in V$ there is an edge from $i$ to $j$ if $a_{i,j}=1$. If there is $k$ such that $(A^k)_{i,j}\geq 1$, then the distance $d(i,j)$ of vertices $i$ and $j$ is defined by
\begin{equation}
d(i,j)=\min \left\{ k : (A^k)_{i,j}\geq 1 \right\}.
\end{equation}  
Otherwise, if vertices $i$ and $j$ cannot be connected, then there is no $k$ such that $(A^k)_{i,j}\geq 1$, as denoted by $d(i,j)=\infty$. If $A$ is irreducible, then for any vertices $i,j\in V$ can be connected in $G$, that is, there is a $k\geq 1$ such that $(A^k)_{i,j}\geq 1$. The diameter $D(G)$ of $G$ is the maximum of distances of all $i,j\in V$, that is, 
\begin{equation}
D(G)=\max \left\{ d(i,j) : i,j \in V  \right\}.
\end{equation}

We now introduce the following notion of the uniform connectedness property of $\{ {\bf T}_m  \}_{m=1}^{\infty}$.

\begin{definition}\label{def3.12}
	For a basic set of admissible patterns $\mathcal{B}$, ${\bf T}_m(\mathcal{B})$ is called \emph{uniformly connected} if ${\bf T}_m(\mathcal{B})$ is irreducible for any $m\geq 1$, and there is a positive integer $K$ such that for any $m \geq 1$ and any $1 \leq i,j \leq r^m$, there exists a $1\leq k \leq K$ such that
	\begin{equation}\label{4.27}
	({\bf T}_m^k)_{i,j} \geq 1.
	\end{equation}
\end{definition}

Uniform connectedness of $\{ {\bf T}_m \}_{m=1}^{\infty}$ is equivalent to there is a finite bound $K$ of the diameters of associated graphs $G({\bf T}_m)$, that is,
\begin{equation}
\max \left\{ D(G({\bf T}_m)) : m \in \mathbb{N} \right\}  \leq K.
\end{equation}

\begin{theorem}
	If $\{ {\bf T}_m  \}_{m=1}^{\infty}$ is irreducible and uniformly connected then $\{ {\bf T}_m  \}_{m=1}^{\infty}$ is uniformly dominated by $\{ \rho({\bf T}_m ) \}_{m=1}^{\infty}$.
\end{theorem}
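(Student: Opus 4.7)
The plan is to combine Lemma \ref{lemma 4.10} and the subsequent lemma (estimating $d/c$) applied \emph{uniformly} to each ${\bf T}_m(\mathcal{B})$, and then absorb the resulting prefactor into the allowed function $c(m,k)$. Since $\{{\bf T}_m(\mathcal{B})\}_{m=1}^\infty$ is uniformly connected, each ${\bf T}_m(\mathcal{B})$ is irreducible with Perron eigenvalue $\rho_m:=\rho({\bf T}_m(\mathcal{B}))\geq 1$ and a strictly positive Perron eigenvector $v^{(m)}\in\mathbb{R}^{r^m}$. Write $c_m=\min_i v^{(m)}_i$ and $d_m=\max_i v^{(m)}_i$.

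First, I would apply Lemma \ref{lemma 4.10} to $A={\bf T}_m(\mathcal{B})$ (an $N\times N$ non-negative integral matrix with $N=r^m$) to obtain
\begin{equation*}
\left|{\bf T}_m^k(\mathcal{B})\right|\leq \frac{d_m}{c_m}\, r^m\, \rho_m^k
\end{equation*}
for every $m,k\geq 1$. Next, by the uniform connectedness hypothesis there is a single integer $K\geq 1$, independent of $m$, such that for every $1\leq i,j\leq r^m$ there exists $1\leq k=k(i,j)\leq K$ with $({\bf T}_m^k(\mathcal{B}))_{i,j}\geq 1$. Hence the lemma preceding Definition \ref{def3.12} (inequality (\ref{4.26})) applies uniformly in $m$ and yields
\begin{equation*}
\frac{d_m}{c_m}\leq \rho_m^{K}.
\end{equation*}
Combining the two bounds gives
\begin{equation*}
\left|{\bf T}_m^k(\mathcal{B})\right|\leq r^m\,\rho_m^{K}\,\rho_m^k,
\end{equation*}
so setting $c(m,k):=r^m\rho_m^{K}$ establishes the desired inequality (\ref{4.17}).

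It then remains to verify (\ref{4.17-1}). Because ${\bf T}_m(\mathcal{B})$ is a $\{0,1\}$-matrix of size $r^m\times r^m$, the maximum row sum bound gives $\rho_m\leq r^m$, hence $\log\rho_m\leq m\log r$. Therefore
\begin{equation*}
\frac{1}{mk}\log c(m,k)=\frac{\log r}{k}+\frac{K\log \rho_m}{mk}\leq \frac{\log r}{k}+\frac{K\log r}{k},
\end{equation*}
which tends to $0$ as $(m,k)\to\infty$. This completes the proof.

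I do not anticipate any substantive obstacle: the uniform connectedness hypothesis was tailored precisely so that the ratio $d_m/c_m$ of Perron-eigenvector entries is controlled by $\rho_m^K$ with a constant $K$ independent of $m$; the only small point to watch is the a priori upper bound $\rho_m\leq r^m$, which ensures that the $\rho_m^K$ factor is subexponential in $mk$ and can be absorbed into $c(m,k)$.
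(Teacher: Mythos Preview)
Your proposal is correct and follows essentially the same route as the paper: apply Lemma \ref{lemma 4.10} to bound $|{\bf T}_m^k|\le (d_m/c_m)\,r^m\rho_m^k$, use inequality (\ref{4.26}) with the uniform constant $K$ to get $d_m/c_m\le\rho_m^K$, and then invoke $\rho_m\le r^m$ to check (\ref{4.17-1}). The only cosmetic difference is that the paper substitutes $\rho_m^K\le r^{mK}$ before naming $c(m,k)=r^{m(K+1)}$, whereas you keep $c(m,k)=r^m\rho_m^K$ and verify the limit directly; both are equivalent.
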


\begin{proof}
	Let $K\geq 1$ such that (\ref{4.27}) holds. Let $v_m=(v_{m,1}, ..., v_{m,N(m)})$ be the associated positive eigenvectors and $\rho_m=\rho({\bf T}_m)$ be the maximum positive eigenvalue of ${\bf T}_m$. Then Lemma \ref{lemma 4.10} implies
	\begin{equation}\label{4.28}
	\frac{\max_i v_{m,i}}{\min_i v_{m,i} } \leq \rho_m^K.
	\end{equation}
	By (\ref{4.22}),
	\begin{equation}\label{4.29}
	\left| {\bf T}_m^k  \right| \leq \rho_m^K r^m \rho_m^k
	\end{equation}
	for all $m\geq 1$ and $k \geq 1$.
	
	Since $\rho_m \leq \left| \mathcal{S} \right|^m$, hence
	\begin{equation}\label{4.30}
	\left| {\bf T}_m^k  \right| \leq r^{Km} r^m \rho_m^k=\left( r^{K+1} \right)^m \rho_m^k.
	\end{equation}
	(\ref{4.17}) holds with $C=r^{m(K+1)}$. Hence $\{ {\bf T}_m  \}_{m=1}^{\infty}$ is uniformly dominated by $\{ \rho({\bf T}_m ) \}_{m=1}^{\infty}$. The proof is complete.
\end{proof}

Therefore we have the following result. 
\begin{theorem}\label{thm 4.14}
	If $\{ {\bf T}_m(\mathcal{B})  \}_{m=1}^{\infty}$ is irreducible and uniformly connected then 
	\begin{equation*}
	h(\mathcal{B})=h_p(\mathcal{B})=h_*(\mathcal{B}).
	\end{equation*}
\end{theorem}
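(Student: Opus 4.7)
The plan is to observe that this theorem follows immediately by chaining the two preceding results. By hypothesis $\{\mathbf{T}_m(\mathcal{B})\}_{m=1}^\infty$ is irreducible and uniformly connected, so the unnumbered theorem just above (the one stating that irreducibility plus uniform connectedness implies uniform domination by $\{\rho(\mathbf{T}_m(\mathcal{B}))\}_{m=1}^\infty$) applies and yields inequalities (\ref{4.17}) and (\ref{4.17-1}). Once uniform domination is in hand, Theorem \ref{thm 4.9} delivers the three-way equality $h(\mathcal{B})=h_p(\mathcal{B})=h_*(\mathcal{B})$ directly.

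Concretely, first I would invoke the uniform-connectedness hypothesis to obtain a uniform bound $K$ on the diameters of the associated graphs $G(\mathbf{T}_m(\mathcal{B}))$. Feeding this into Lemma \ref{lemma 4.10} together with the ratio estimate $\max_i v_{m,i}/\min_i v_{m,i}\le \rho(\mathbf{T}_m(\mathcal{B}))^K$ produces the bound $|\mathbf{T}_m^k(\mathcal{B})|\le r^{m(K+1)}\rho(\mathbf{T}_m(\mathcal{B}))^k$, so one may take $c(m,k)=r^{m(K+1)}$, which clearly satisfies $\frac{1}{mk}\log c(m,k)\to 0$ as $(m,k)\to\infty$. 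This is precisely the hypothesis of Theorem \ref{thm 4.9}.

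Second, applying Theorem \ref{thm 4.9} with this choice of $c(m,k)$ gives the desired chain of equalities. No new estimate is required beyond what has already been established; the proof is genuinely a one-line composition of the two prior results.

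Because everything reduces to citing established results, there is no substantive obstacle here; the content of the theorem lies entirely in the earlier propositions. The only thing worth being careful about is verifying that the constant $c(m,k)=r^{m(K+1)}$ really does satisfy $\lim_{(m,k)\to\infty}\frac{1}{mk}\log c(m,k)=0$, which is transparent since $\frac{1}{mk}\log r^{m(K+1)}=\frac{K+1}{k}\log r\to 0$ as $k\to\infty$ uniformly in $m$.
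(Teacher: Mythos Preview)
Your proposal is correct and matches the paper's approach exactly: the paper presents this theorem with no explicit proof, introducing it simply with ``Therefore we have the following result,'' making clear it is an immediate corollary of the preceding theorem (uniform connectedness implies uniform domination) combined with Theorem~\ref{thm 4.9}. Your additional unpacking of the constant $c(m,k)=r^{m(K+1)}$ just recapitulates the proof of that preceding theorem and is not needed, but it is harmless.
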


In the case of ${\bf T}_m$ being reducible, then Theorem \ref{theorem 4.2} (ii) implies there is a maximum irreducible submatrix $\overline{\bf T}_m=[\overline{t}_{m;\alpha, \beta}]_{\overline{\mathcal{I}}_m \times \overline{\mathcal{I}}_m}$ where $\overline{t}_{m;\alpha, \beta} \leq t_{m;\alpha, \beta}$ and either $\overline{t}_{m;\alpha, \beta} = t_{m;\alpha, \beta}>0$ or $\overline{t}_{m;\alpha, \beta}=0$ and $\overline{\mathcal{I}}_m \subseteq [1,r^m]$ is the set of indices such that for any indices pair $\alpha$ and $\beta$ in $\overline{\mathcal{I}}_m$ then there is $k\geq 1$ such that 
\begin{equation}\label{4.31}
(\overline{\bf T}_m^k )_{\alpha, \beta} \geq 1.
\end{equation}
Since $\overline{\bf T}_m$ is maximum, 
\begin{equation}\label{4.32}
\rho(\overline{\bf T}_m)=\rho({\bf T}_m).
\end{equation}
We have the following definition.

\begin{definition}\label{lemma 4.15}
	For any basic set of admissible patterns $\mathcal{B}\subseteq \mathcal{S}_r$, $ {\bf T}_m^k(\mathcal{B})$ is called uniformly dominated by $\overline{\bf T}_m^k(\mathcal{B})$ if there is a positive function $c_0(m,k)$ such that 
	\begin{equation}\label{4.33}
	\left| {\bf T}_m^k(\mathcal{B}) \right| \leq c_0(m,k)\left| \overline{\bf T}_m^k(\mathcal{B}) \right|
	\end{equation}
	with
	\begin{equation}\label{4.33-1}
	\lim_{k \rightarrow \infty} \frac{1}{mk}\log c_0(m,k)=0.
	\end{equation}
\end{definition} 

To illustrate Definition \ref{lemma 4.15}, we introduce the following example.
\begin{example}
	Consider
	\begin{equation*}
	{\bf V}_2=\left[
	\begin{matrix}
	1&0&0&1\\
	1&1&1&1\\
	1&1&1&1\\
	1&0&0&1
	\end{matrix}\right]
	\end{equation*}
	which is a reducible matrix. Then by permutation matrix
	\begin{equation*}
	P=\left[
	\begin{matrix}
	1&0&0&0\\
	0&0&0&1\\
	0&0&1&0\\
	0&1&0&0
	\end{matrix}\right],
	\end{equation*}
	${\bf V}_2$ can be transformed into its normal form of reducible matrices
	\begin{equation*}
	\overline{\bf V}_2=\left[
	\begin{matrix}
	1&1&0&0\\
	1&1&0&0\\
	1&1&1&1\\
	1&1&1&1
	\end{matrix}\right]=
	\left[
	\begin{matrix}
	1&0\\
	1&1
	\end{matrix}\right]\otimes
	\left[
	\begin{matrix}
	1&1\\
	1&1
	\end{matrix}\right].
	\end{equation*}	
	By using the recursive formula, it can be shown that the normal form $\overline{\bf V}_3$ of matrix ${\bf V}_3$ is
	\begin{equation*}
	\overline{\bf V}_3=\left[
	\begin{matrix}
	1&0\\
	1&1
	\end{matrix}\right]\otimes
	\left[
	\begin{matrix}
	1&0\\
	1&1
	\end{matrix}\right]\otimes
	\left[
	\begin{matrix}
	1&1\\
	1&1
	\end{matrix}\right].
	\end{equation*}		
	For general $m\geq 3$,
	\begin{equation*}
	\overline{\bf V}_m=\otimes^{m-1}
	\left[
	\begin{matrix}
	1&0\\
	1&1
	\end{matrix}\right]\otimes
	\left[
	\begin{matrix}
	1&1\\
	1&1
	\end{matrix}\right].
	\end{equation*}	
	Therefore,
	\begin{equation*}
	\left|{\bf V}_m^k  \right|=(k+2)^{m-1}2^{k+1}.
	\end{equation*}
	Hence
	\begin{equation*}
	\left|{\bf T}_m^k  \right|\leq c_0(m,k)2^{k+1}
	\end{equation*}
	with $c_0(m,k)=(k+2)^{m-1}$, which satisfies (\ref{4.33-1}). 
	
	Note that
	\begin{equation*}
	\rho({\bf T}_m)=\rho({\bf V}_m)=2
	\end{equation*} 
	for all $m\geq 2$. Hence,
	\begin{equation*}
	h(\mathcal{B})=h_p(\mathcal{B})=0.
	\end{equation*}
\end{example}

It may happen that ${\bf T}_m(\mathcal{B})$ has zero rows or zero columns. In these case, we can reduce ${\bf T}_m(\mathcal{B})$ by deleting its zero rows and columns. When matrix $A$ has zero rows or zero columns, we introduce the following notation, see \cite{BLL2009 Mixing property and entropy conjugacy of Z2 subshift of finite type: a case study}.
\begin{definition}
	A non-negative matrix $A$ is called \emph{weakly primitive} if there exists $K\geq 1$ such that each entry of $A^k$ is positive except in positions of $A$ where a zero row or zero column is present for all $k\geq K$. That is to say after deleting the zero row or zero column in $A$, the remaining matrix $\overline{A}$ is primitive. Similarly, $A$ is called \emph{weakly irreducible} if the remaining matrix $\overline{A}$ is irreducible after deleting zero rows and zero columns from $A$.
\end{definition}
Definition \ref{def3.12} can be extended as follows. 
\begin{definition}
The maximum irreducible component $\{ \overline{\bf T}_m(\mathcal{B})  \}_{m=1}^{\infty}$ of $\{ {\bf T}_m(\mathcal{B}) \}_{m=1}^\infty$ is called uniformly connected if there is a positive constant $K\geq 1$ such that for any $m\geq 1$ and any indices pair $i,j \in \overline{\mathcal{I}}_m$, there is $1 \leq k \leq K$ such that 
\begin{equation}\label{4.34}
(\overline{\bf T}_m^k )_{i,j} \geq 1.
\end{equation}
\end{definition}
Therefore, we have the following result.

\begin{theorem}\label{thm 4.17}
	If $\{ {\bf T}_m(\mathcal{B})  \}_{m=1}^{\infty}$ is uniformly dominated by the maximum irreducible submatrices $\{ \overline{\bf T}_m(\mathcal{B})  \}_{m=1}^{\infty}$ and $\{ \overline{\bf T}_m(\mathcal{B})  \}_{m=1}^{\infty}$ is uniformly connected, then $\{ {\bf T}_m(\mathcal{B})  \}_{m=1}^{\infty}$ is uniformly dominated by $\{ \rho ({\bf T}_m) \}_{m=1}^{\infty}$. In theses cases, 
	\begin{equation}
	h(\mathcal{B})=h_p(\mathcal{B})=h_*(\mathcal{B}).
	\end{equation}
\end{theorem}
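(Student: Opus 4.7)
The plan is to reduce everything to Theorem \ref{thm 4.9} by showing that the hypotheses force $\{{\bf T}_m(\mathcal{B})\}_{m=1}^{\infty}$ to be uniformly dominated by $\{\rho({\bf T}_m(\mathcal{B}))\}_{m=1}^{\infty}$. The crucial observation is (\ref{4.32}): passing from ${\bf T}_m$ to its maximum irreducible component $\overline{\bf T}_m$ preserves the spectral radius, so any bound in terms of $\rho(\overline{\bf T}_m)$ translates immediately to one in terms of $\rho({\bf T}_m)$.

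First I would replay the proof of Theorem \ref{thm 4.14} on $\overline{\bf T}_m(\mathcal{B})$ rather than on ${\bf T}_m(\mathcal{B})$. Because $\overline{\bf T}_m$ is irreducible, it has a positive Perron eigenvector $\overline{v}_m$, and uniform connectedness of $\{\overline{\bf T}_m(\mathcal{B})\}_{m=1}^\infty$ with constant $K$ feeds directly into the ratio bound (\ref{4.26}) to give
\begin{equation*}
\frac{\max_i \overline{v}_{m,i}}{\min_i \overline{v}_{m,i}} \leq \rho(\overline{\bf T}_m)^K = \rho({\bf T}_m)^K.
\end{equation*}
Lemma \ref{lemma 4.10}, combined with the obvious bound $\rho({\bf T}_m)\leq r^m$, then yields
\begin{equation*}
|\overline{\bf T}_m^{\,k}(\mathcal{B})| \leq \rho({\bf T}_m)^K \cdot r^m \cdot \rho({\bf T}_m)^k \leq (r^{K+1})^m \,\rho({\bf T}_m)^k,
\end{equation*}
exactly as in (\ref{4.30}).

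Next I would compose this with the hypothesized uniform domination (\ref{4.33}) of ${\bf T}_m^k$ by $\overline{\bf T}_m^{\,k}$, obtaining
\begin{equation*}
|{\bf T}_m^k(\mathcal{B})| \leq c_0(m,k)\,|\overline{\bf T}_m^{\,k}(\mathcal{B})| \leq c_0(m,k)(r^{K+1})^m\,\rho({\bf T}_m)^k.
\end{equation*}
Setting $c(m,k):=c_0(m,k)(r^{K+1})^m$, the inequality (\ref{4.17}) is realized, and (\ref{4.17-1}) follows from (\ref{4.33-1}) because
\begin{equation*}
\frac{1}{mk}\log c(m,k) \;=\; \frac{1}{mk}\log c_0(m,k) + \frac{(K+1)\log r}{k},
\end{equation*}
and both terms go to zero as $(m,k)\to\infty$. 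Once uniform domination by $\{\rho({\bf T}_m(\mathcal{B}))\}_{m=1}^\infty$ is in hand, Theorem \ref{thm 4.9} applies verbatim and produces $h(\mathcal{B})=h_p(\mathcal{B})=h_*(\mathcal{B})$.

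The main obstacle is a bookkeeping one regarding the joint limit in $(m,k)$: condition (\ref{4.33-1}) is stated as a limit in $k$ with $m$ fixed, whereas (\ref{4.17-1}) requires the double limit, so one either reads (\ref{4.33-1}) as intended to hold jointly or must verify directly that the product $c_0(m,k)(r^{K+1})^m$ admits a subexponential rate in $mk$. Beyond this subtlety the argument is purely a composition: the Perron--Frobenius eigenvector-ratio estimate on $\overline{\bf T}_m$, upgraded via (\ref{4.32}) and the hypothesized control of ${\bf T}_m^k$ by $\overline{\bf T}_m^{\,k}$, then fed into Theorem \ref{thm 4.9}.
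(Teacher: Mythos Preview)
Your argument is correct and is precisely the composition the paper has set up: apply the Perron--Frobenius ratio bound (\ref{4.26}) and Lemma \ref{lemma 4.10} to the irreducible $\overline{\bf T}_m$, use (\ref{4.32}) to trade $\rho(\overline{\bf T}_m)$ for $\rho({\bf T}_m)$, multiply by the domination factor $c_0(m,k)$ from (\ref{4.33}), and invoke Theorem \ref{thm 4.9}. The paper in fact states Theorem \ref{thm 4.17} without an explicit proof, treating it as an immediate consequence of the preceding machinery, so your write-up is the intended argument spelled out.

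Your remark on the limit in (\ref{4.33-1}) is a genuine observation about the paper rather than about your proof: as written, (\ref{4.33-1}) is a limit in $k$ alone, whereas the conclusion (\ref{4.17-1}) requires the joint limit. The combined bound $c(m,k)=c_0(m,k)(r^{K+1})^m$ does satisfy $\frac{1}{mk}\log c(m,k)\to 0$ provided $\frac{1}{mk}\log c_0(m,k)\to 0$ jointly, so one must read Definition \ref{lemma 4.15} with that stronger interpretation (as is done in the paper's own Example, where $c_0(m,k)=(k+2)^{m-1}$ visibly satisfies the joint condition). This is a defect in the paper's formulation, not in your reasoning.
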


We can provide some examples which have been studied widely as follows.

\begin{example}\label{example 3.20}
	\item[(a)] Golden-Mean shift (GM):	
	\begin{equation*}
	{\bf H}_2=\left[ \begin{matrix}
	1&1&1&0\\
	1&0&1&0\\
	1&1&0&0\\
	0&0&0&0
	\end{matrix}  \right].
	\end{equation*}
	
	\item[(b)] Hard-Hexagon model \cite{Joyce1988 On the Hard-Hexagon}:
	\begin{equation*}
	{\bf H}_2=\left[ \begin{matrix}
	1&1&1&0\\
	1&0&1&0\\
	1&0&0&0\\
	0&0&0&0
	\end{matrix}  \right].
	\end{equation*}
	
	\item[(c)]Strict Golden-Mean shift (SGM):
	\begin{equation*}
	{\bf H}_2=\left[ \begin{matrix}
	1&1&1&0\\
	1&0&0&0\\
	1&0&0&0\\
	0&0&0&0
	\end{matrix}  \right].
	\end{equation*}
	All of their ${\bf H}_n$, ${\bf V}_n$ and ${\bf T}_m$ can be proved to be weakly primitive and $\{\overline{\bf T}_m \}_{m=1}^\infty$ are uniformly connected. Therefore, Theorem \ref{thm 4.17} is applicable to them.
\end{example}

It is worth to mention that the concept of uniform connectedness of $\left\{  {\bf T}_m \left( \mathcal{B}\right) \right\}_{m=1}^{\infty}$ can also be interpreted in terms of the mixing properties of a two-dimensional symbolic dynamic system. Indeed, it is a periodic version of block gluing which has been studied in \cite{BHLL2021 Verification of mixing properties in two-dimensional shifts of finite type,Boyle2010 Multidimensional sofic shifts}. We can now introduce the concept of horizontal-periodic block gluing as follows.

For $m\geq 1$ and a pair of two horizontal $m$-periodic pattern $\overline{U}_m\in \Sigma_{\infty\times k_1}\left( \mathcal{B} \right)$ and $\overline{V}_m\in \Sigma_{\infty\times k_2}\left( \mathcal{B} \right)$, $\left(\overline{U}_m,\overline{V}_m  \right)$ can be glued together with vertical distance $k\geq 0$ if there exists a $m$-periodic patterns $\overline{W}_m\in \Sigma_{\infty\times (k_1+k+k_2)}\left( \mathcal{B} \right)$ such that
\begin{equation}\label{3.48}
\left(\overline{W}_m\right)_{\alpha,\beta}=\left(\overline{U}_m\right)_{\alpha,\beta},\mbox{ for }1-k_1\leq \beta \leq 0
\end{equation}
and
\begin{equation}\label{3.49}
\left(\overline{W}_m\right)_{\alpha,\beta}=\left(\overline{V}_m\right)_{\alpha,\beta},\mbox{ for }k \leq \beta \leq k+k_2-1.
\end{equation}

We can now give the definition of horizontal-periodic block gluing.
\begin{definition}\label{def3.21}
	The shift space $\Sigma \left( \mathcal{B}\right)$ is called \emph{horizontal-periodic block gluing} if there exists an integer $K\geq 1$ such that for any two horizontal $m$-periodic patterns $\overline{U}_m\in \Sigma_{\infty\times k_1}\left( \mathcal{B} \right)$ and $\overline{V}_m\in \Sigma_{\infty\times k_2}\left( \mathcal{B} \right)$, $\left(\overline{U}_m,\overline{V}_m  \right)$ can be glued together with vertical distance $k$ for any $k\geq K$ for all $m\geq 1$.
\end{definition}
Now we can prove the following theorem which is concerning the uniform connectness and horizontal-periodic block gluing.
\begin{theorem}\label{theorem 3.22}
	If $\left\{  {\bf T}_m \left( \mathcal{B}\right) \right\}_{m=1}^{\infty}$ is irreducible and $\Sigma \left(\mathcal{B}\right)$ is horizontal-periodic block gluing, then $\left\{  {\bf T}_m \left( \mathcal{B}\right) \right\}_{m=1}^{\infty}$ is uniformly connected. Furthermore, if $\left\{  {\bf T}_m \left( \mathcal{B}\right) \right\}_{m=1}^{\infty}$ is uniformly connected and for any $m\geq 1$, there exists an index $i_m$ such that 
	\begin{equation}\label{3.50}
	\left( {\bf T}_m\left( \mathcal{B}\right)\right)_{i_m,i_m}=1,
	\end{equation}
	then $\Sigma\left(\mathcal{B}\right)$ is horizontal-periodic block gluing.
\end{theorem}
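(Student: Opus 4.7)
The plan is to exploit the dictionary between indices of ${\bf T}_m(\mathcal{B})$ and horizontally $m$-periodic rows: each index $i\in\{1,\dots,r^m\}$ encodes a length-$m$ row $\overline{r}_i$ (extended $m$-periodically), and $({\bf T}_m^k(\mathcal{B}))_{i,j}\geq 1$ holds if and only if there is a $\mathcal{B}$-admissible horizontal $m$-periodic pattern in $\Sigma_{\infty\times(k+1)}(\mathcal{B})$ whose top row is $\overline{r}_i$ and whose bottom row is $\overline{r}_j$. Under this identification, gluing two horizontal $m$-periodic patterns at vertical distance $k$ is exactly the assertion that ${\bf T}_m^k(\mathcal{B})$ has a positive entry between the bottom row of the upper pattern and the top row of the lower pattern.

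For the forward direction, let $K$ be the constant supplied by Definition \ref{def3.21}. Given $m\geq 1$ and any indices $i,j$, view $\overline{r}_i$ and $\overline{r}_j$ as single-row patterns in $\Sigma_{\infty\times 1}(\mathcal{B})$; this is legitimate because $\mathcal{B}$ imposes only $2\times 2$ window constraints, so every length-$m$ row is trivially admissible at height one. Applying horizontal-periodic block gluing with $k_1=k_2=1$ and vertical distance exactly $k=K$ produces $\overline{W}_m\in\Sigma_{\infty\times(K+1)}(\mathcal{B})$ whose top row is $\overline{r}_i$ and whose bottom row is $\overline{r}_j$. By the dictionary this yields $({\bf T}_m^K(\mathcal{B}))_{i,j}\geq 1$, so uniform connectedness holds with the same constant $K$. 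The irreducibility assumption enters only to ensure that each $\overline{r}_i$ really does appear as a row of some admissible horizontal $m$-periodic pattern, which is tacitly required for Definition \ref{def3.21} to make sense.

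For the reverse direction, let $K$ be the uniform-connectedness constant and let $i_m$ be the self-loop index. Given $\overline{U}_m\in\Sigma_{\infty\times k_1}(\mathcal{B})$ and $\overline{V}_m\in\Sigma_{\infty\times k_2}(\mathcal{B})$, write $i_U$ for the index of the bottom row of $\overline{U}_m$ and $i_V$ for the index of the top row of $\overline{V}_m$. Uniform connectedness supplies $a_1,a_2\in\{1,\dots,K\}$ with $({\bf T}_m^{a_1}(\mathcal{B}))_{i_U,i_m}\geq 1$ and $({\bf T}_m^{a_2}(\mathcal{B}))_{i_m,i_V}\geq 1$, while the self-loop yields $({\bf T}_m^s(\mathcal{B}))_{i_m,i_m}\geq 1$ for every $s\geq 0$. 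Composing these gives
\[
({\bf T}_m^{a_1+s+a_2}(\mathcal{B}))_{i_U,i_V}\;\geq\;({\bf T}_m^{a_1}(\mathcal{B}))_{i_U,i_m}\,({\bf T}_m^{s}(\mathcal{B}))_{i_m,i_m}\,({\bf T}_m^{a_2}(\mathcal{B}))_{i_m,i_V}\;\geq\;1
\]
for every $s\geq 0$, so every vertical distance $k\geq a_1+a_2$ is realisable by a suitable choice of $s$. Since $a_1+a_2\leq 2K$ uniformly in $m$, $\Sigma(\mathcal{B})$ is horizontal-periodic block gluing with constant $2K$.

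The main obstacle is only the careful verification of the dictionary: in the forward direction, that the specified top and bottom rows of the glued pattern $\overline{W}_m$ are indeed $\overline{r}_i$ and $\overline{r}_j$; in the reverse direction, that the bridge just constructed — an initial segment from $i_U$ to $i_m$, a plateau of copies of $\overline{r}_{i_m}$, and a terminal segment from $i_m$ to $i_V$ — does stitch admissibly onto the bottom of $\overline{U}_m$ and the top of $\overline{V}_m$. Both points reduce to the fact that $\mathcal{B}$-admissibility for horizontal $m$-periodic patterns of height at least two is witnessed row by row by the matrix ${\bf T}_m(\mathcal{B})$, so no additional consistency check beyond powers of ${\bf T}_m(\mathcal{B})$ is needed.
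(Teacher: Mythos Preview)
Your proof is correct and follows essentially the same approach as the paper: both directions hinge on the dictionary that $({\bf T}_m^k)_{i,j}\geq 1$ is equivalent to the existence of an admissible horizontal $m$-periodic pattern of the appropriate height with prescribed top and bottom rows, and the reverse direction routes through the self-loop at $i_m$ exactly as the paper does. Your write-up is slightly more explicit about the dictionary and obtains the marginally sharper gluing constant $2K$ (the paper uses $2K+1$), but otherwise the arguments coincide.
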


\begin{proof}
	Denoted by the indices $\left(\overline{U}_m\right)_{\alpha,0}, 0\leq \alpha \leq m-1$ and $\left(\overline{V}_m\right)_{\alpha,0}, 0\leq \alpha \leq m-1$, by $i$ and $j$, respectively. Then (\ref{3.48}) and (\ref{3.49}) is equivalent to 
	\begin{equation}
		\left( {\bf T}^k_m\left( \mathcal{B}\right)\right)_{i,j}\geq 1.
	\end{equation}
	If $\Sigma\left( \mathcal{B}\right)$ is horizontal-periodic block gluing, then for any $i,j$ in $\mathcal{I}_m \left( \Sigma \left( \mathcal{B}\right)\right)$
	\begin{equation*}
	\left( {\bf T}^K_m\left( \mathcal{B}\right)\right)_{i,j}\geq 1.
	\end{equation*}
	Hence
	\begin{equation}
	\left( {\bf T}^k_m\left( \mathcal{B}\right)\right)_{i,j}\geq 1
	\end{equation}
	for some $1\leq k \leq K$. Therefore, $\left\{  {\bf T}_m \left( \mathcal{B}\right) \right\}_{m=1}^{\infty}$ is uniformly connected.
	
	On the other hand, if $\left\{  {\bf T}_m \left( \mathcal{B}\right) \right\}_{m=1}^{\infty}$ is uniformly connected with finite bound $K$ of diameters of associated graphs $G_m\left( \mathcal{B}\right)$ and satisfying condition (\ref{3.50}), then for $m\geq 1$ and any index pair $1\leq i,j \leq r^m$, if $k\geq 2K+1$ then
	\begin{equation*}
	\left( {\bf T}^{k_1}_m\left( \mathcal{B}\right)\right)_{i,i_m}\geq 1 \mbox{ and }\left( {\bf T}^{k_2}_m\left( \mathcal{B}\right)\right)_{i_m,j}\geq 1
	\end{equation*}
	with $1\leq k_1,k_2 \leq K$, and 
	\begin{equation*}
		\left( {\bf T}^{2K+1-(k_1+k_2)}_m\left( \mathcal{B}\right)\right)_{i_m,i_m}\geq 1
	\end{equation*}
	implies
	\begin{equation*}
		\left( {\bf T}^k_m\left( \mathcal{B}\right)\right)_{i,j}\geq 1.
	\end{equation*}
	
	Hence $\Sigma\left( \mathcal{B}\right)$ is horizontal-periodic block gluing. The proof is complete.
\end{proof}

By combining with Theorems \ref{thm 4.14}, \ref{thm 4.17} and \ref{theorem 3.22}, we have the following result.

\begin{theorem}
	If $\left\{  {\bf T}_m \left( \mathcal{B}\right) \right\}_{m=1}^{\infty}$ is weakly irreducible and $\Sigma\left( \mathcal{B}\right)$ is horizontal-periodic block gluing, then
	\begin{equation*}
	h\left(\mathcal{B}\right)=	h_p\left(\mathcal{B}\right)=	h_*\left(\mathcal{B}\right).
	\end{equation*} 
\end{theorem}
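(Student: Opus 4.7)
The plan is to chain Theorems \ref{theorem 3.22} and \ref{thm 4.17}: weak irreducibility isolates an irreducible block $\overline{\bf T}_m(\mathcal{B})$ at each level $m\geq 1$; horizontal-periodic block gluing forces these blocks to be uniformly connected; and the zero rows and columns of ${\bf T}_m(\mathcal{B})$ contribute nothing to $|{\bf T}_m^k(\mathcal{B})|$, so uniform domination of $\{{\bf T}_m(\mathcal{B})\}_{m=1}^\infty$ by $\{\overline{\bf T}_m(\mathcal{B})\}_{m=1}^\infty$ holds trivially. Theorem \ref{thm 4.17} then delivers the equality $h(\mathcal{B})=h_p(\mathcal{B})=h_*(\mathcal{B})$.

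First I would verify the passage from block gluing to uniform connectedness by mimicking the proof of Theorem \ref{theorem 3.22}, restricted to the admissible index set $\overline{\mathcal{I}}_m$ of $\overline{\bf T}_m(\mathcal{B})$. Given any $i,j \in \overline{\mathcal{I}}_m$, they correspond to two horizontal $m$-periodic patterns $\overline{U}_m, \overline{V}_m \in \Sigma_{\infty\times 2}(\mathcal{B})$ realized in $\Sigma(\mathcal{B})$. The horizontal-periodic block gluing hypothesis supplies a constant $K\geq 1$ such that $(\overline{U}_m, \overline{V}_m)$ can be glued with vertical distance $k = K$; reading off the stack of height-$2$ strips in the resulting admissible $m$-periodic pattern of height $K+4$ then gives $(\overline{\bf T}_m(\mathcal{B})^{K+2})_{i,j} \geq 1$. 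Because $K$ is independent of $m$, the family $\{\overline{\bf T}_m(\mathcal{B})\}_{m=1}^\infty$ is uniformly connected in the sense of (\ref{4.34}).

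Next I would check that $|{\bf T}_m^k(\mathcal{B})| = |\overline{\bf T}_m^k(\mathcal{B})|$, making (\ref{4.33}) and (\ref{4.33-1}) of Definition \ref{lemma 4.15} hold trivially with $c_0(m,k) \equiv 1$. Any index whose row (respectively, column) of ${\bf T}_m(\mathcal{B})$ is identically zero contributes a zero row (respectively, column) to every power ${\bf T}_m^k(\mathcal{B})$, and any intermediate vertex of a length-$k$ walk must simultaneously have a non-zero row and a non-zero column. Hence all nonzero entries of ${\bf T}_m^k(\mathcal{B})$ lie on $\overline{\mathcal{I}}_m\times\overline{\mathcal{I}}_m$ and agree with those of $\overline{\bf T}_m^k(\mathcal{B})$.

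With these two ingredients, Theorem \ref{thm 4.17} applies directly and concludes the proof. The main obstacle I anticipate is the bookkeeping in the first step: precisely matching the indices of ${\bf T}_m$, which correspond to height-$2$ strips, with the two periodic patterns $\overline{U}_m, \overline{V}_m$ of the gluing definition, and converting the vertical gluing distance $K$ into a uniform-in-$m$ exponent of $\overline{\bf T}_m$. The argument is essentially that of Theorem \ref{theorem 3.22}, but one must confirm that specializing to $k_1 = k_2 = 2$ in Definition \ref{def3.21} and restricting to $\overline{\mathcal{I}}_m$ does not require the additional self-loop hypothesis (\ref{3.50}) invoked in the converse direction of Theorem \ref{theorem 3.22}.
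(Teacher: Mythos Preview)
Your plan is the paper's: the theorem there is stated as an immediate combination of Theorems \ref{thm 4.17} and \ref{theorem 3.22}, and you are spelling out exactly that combination. The forward half of Theorem \ref{theorem 3.22} restricted to $\overline{\mathcal{I}}_m$ goes through as you describe and does not require the self-loop hypothesis (\ref{3.50}); moreover, any walk in ${\bf T}_m$ whose endpoints lie in $\overline{\mathcal{I}}_m$ automatically stays in $\overline{\mathcal{I}}_m$ at every intermediate step, so the gluing pattern really does give $(\overline{\bf T}_m^{\,k})_{i,j}\ge 1$.

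One slip in the second step: the equality $|{\bf T}_m^k(\mathcal{B})|=|\overline{\bf T}_m^k(\mathcal{B})|$ is false in general. A nonzero entry $({\bf T}_m^k)_{i,j}$ only forces row $i$ and column $j$ to be nonzero, not that both $i$ and $j$ lie in $\overline{\mathcal{I}}_m$ (e.g.\ $A=\left[\begin{smallmatrix}1&1&0\\1&1&0\\1&1&0\end{smallmatrix}\right]$ has $|A^k|=3\cdot 2^k$ but $|\overline{A}^k|=2\cdot 2^k$). What your intermediate-vertex observation does give is that for $k\ge 3$ every length-$k$ walk enters $\overline{\mathcal{I}}_m$ at step $1$ and stays there until step $k-1$, whence $|{\bf T}_m^k|\le r^{2m}\,|\overline{\bf T}_m^{\,k-2}|$. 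Together with the uniform connectedness you established (and Lemmas \ref{lemma 4.10}--(\ref{4.26})) this yields (\ref{4.33})--(\ref{4.33-1}) with $c_0(m,k)$ of order $r^{O(m)}$, so Theorem \ref{thm 4.17} still applies and the conclusion stands.
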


\section{Entropy studied by skew-coordinated system}\label{section4}
In this section, we use a skew-coordinated system to study the entropies. We first recall some properties of skew-coordinated systems \cite{BHLL2013 Zeta functions for two-dimensional shifts of finite type,Lind1998 A zeta function for,Mac Duffie1956 The theory of matrices}. For a skew-coordinated system $\gamma \in GL_2(\mathbb{Z})$, $GL_2(\mathbb{Z})$ is the modular group
 \begin{equation*}
	\begin{aligned}
	 GL_2(\mathbb{Z})=\left\{ \left[  \begin{matrix}
	 	a&b \\
	 	c&d
	 \end{matrix}  \right] : a,b,c,d\in \mathbb{Z} \mbox{ and } \left|  ad-bc  \right|=1    \right\},
	\end{aligned}
\end{equation*}	
and its subgroup
 \begin{equation*}
	\begin{aligned}
		SL_2(\mathbb{Z})=\left\{ \left[  \begin{matrix}
			a&b \\
			c&d
		\end{matrix}  \right]    :  a,b,c,d\in \mathbb{Z} \mbox{ and }   ad-bc  =1       \right\}.
	\end{aligned}
\end{equation*}	

Given $\gamma=\left[  \begin{matrix}
a&b \\
c&d
\end{matrix}  \right] \in GL_2(\mathbb{Z})$, $\mathbb{Z}^2=\{ (ra+sc, rb+sd) : r,s \in \mathbb{Z} \}$ holds. Therefore $\gamma$ is a unimodular transformation on $\mathbb{Z}^2$ and induces a skew-coordinated system on $\mathbb{Z}^2$. Indeed, the unit lattice points in $\gamma$-coordinates are
\[
(1,0)_{\gamma}=(a,b) \mbox{ and } (0,1)_{\gamma}=(c,d),
\]
and the unit vectors are 
\[
\vec{\gamma}_1=(1,0)_{\gamma}^t=(a,b)^t\mbox{ and }\vec{\gamma}_2=(0,1)_{\gamma}^t=(c,d)^t.
\]
The height $h(\gamma)$ of $\gamma$ is defined by
\[
h=h(\gamma)=|a|+|b|,
\]
and the width of $\gamma$ is
\[
w=w(\gamma)=|c|+|d|.
\]
$\gamma_0=\left[  \begin{matrix}
1&0\\
0&1
\end{matrix}  \right]\in GL_2(\mathbb{Z})$ is the standard rectangular system. The smallest parallelogram lattices in the $\gamma$-coordinate that contain exactly one unit square lattice in $\gamma_0$-coordinates are determined as follows, see \cite{BHLL2013 Zeta functions for two-dimensional shifts of finite type}.
\begin{proposition}\label{theorem unit square}
	For any $\gamma=\left[  \begin{matrix}
	a&b\\
	c&d
	\end{matrix}  \right]\in GL_2(\mathbb{Z})$, there exists exactly one unit square lattice in $\gamma_0$-coordinates in the parallelogram lattices with vertices $(0,0)_{\gamma}$, $(w,0)_{\gamma}$, $(0,h)_\gamma$ and $(w,h)_\gamma$. The unit square lattice has either vertices $(0,h)_\gamma$ and $(w,0)_\gamma$ or $(0,0)_\gamma$ and $(w,h)_\gamma$.
\end{proposition}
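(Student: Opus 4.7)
The plan is to recast the problem in terms of two integer-valued linear forms and analyse how they behave on the corners of an arbitrary unit square. Setting $\epsilon = \det\gamma \in \{\pm 1\}$ and
\[
L_1(x,y) = \epsilon(dx - cy), \qquad L_2(x,y) = \epsilon(ay - bx),
\]
one checks that $L_1((s,t)_\gamma) = s$ and $L_2((s,t)_\gamma) = t$, so the parallelogram in the statement is
\[
P_\gamma = \{(x,y)\in\mathbb{R}^2 : 0 \le L_1(x,y) \le w,\ 0 \le L_2(x,y) \le h\}.
\]
For a unit square $Q = [i,i+1]\times[j,j+1]$, the four values of $L_1$ at its corners are $L_1(i,j) + \{0,\epsilon d,-\epsilon c,\epsilon(d-c)\}$, a set of the form $\{A + B : A \in \{0,\epsilon d\},\,B\in\{0,-\epsilon c\}\}$; this yields the range identity
\[
\max_Q L_1 - \min_Q L_1 = |\epsilon d|+|\epsilon c| = |c|+|d| = w,
\]
and symmetrically $\max_Q L_2 - \min_Q L_2 = |a|+|b| = h$. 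Hence $Q\subseteq P_\gamma$ if and only if $\min_Q L_1 = 0$ and $\min_Q L_2 = 0$.

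With this reduction, the next step is to identify the two distinguished corners of $Q$ realising these minima. The sign pattern of $(a,b,c,d)$ specifies explicitly which corner achieves each. The $\min L_1$-corner lies on $\{L_1 = 0\}$, and its integer points form $\{k(c,d) : k\in\mathbb{Z}\}$, because $\gcd(c,d) = 1$ (any common divisor of $c,d$ divides $ad-bc = \pm 1$); similarly the $\min L_2$-corner lies in $\{l(a,b) : l\in\mathbb{Z}\}$. These two lines intersect $\mathbb{Z}^2$ only at the origin. If the two distinguished corners coincide (\emph{Case A}), they must both be $(0,0)=(0,0)_\gamma$, and the diagonally opposite corner of $Q$ has $(L_1,L_2) = (w,h)$, hence sits at $(w,h)_\gamma$. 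If instead the two corners are diagonally opposite in $Q$ (\emph{Case B}), evaluating $(L_1,L_2)$ at them shows they are exactly $(0,h)_\gamma$ and $(w,0)_\gamma$. Either way, $Q$ shares two vertices with the parallelogram, proving the second assertion.

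The principal obstacle will be ruling out the remaining \emph{adjacency} configurations, in which the two distinguished corners of $Q$ differ by $\pm e_1$ or $\pm e_2$. An adjacency $l(a,b) - k(c,d) = \pm e_i$ has a unique integer solution because $(a,b),(c,d)$ is a $\mathbb{Z}$-basis, and tracing through the sign constraints against $ad-bc = \pm 1$ forces $|ad|+|bc| = 1$ with all four entries of $\gamma$ nonzero, a contradiction. When some of $a,b,c,d$ vanish, the corresponding minimum is attained on a whole edge of $Q$ rather than at a single corner, and one is free to select the corner that places the configuration in Case A or Case B. Uniqueness of $(i,j)$, and hence of $Q$, then follows because the sign data determines which corner of $Q$ is extremal for each $L_\nu$, and the pair of conditions $\min_Q L_1 = 0$ and $\min_Q L_2 = 0$ pin that corner to a single lattice point on the boundary of $P_\gamma$.
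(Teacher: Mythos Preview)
The paper does not prove this proposition; it is quoted from the reference \cite{BHLL2013 Zeta functions for two-dimensional shifts of finite type}, so there is no in-paper argument to compare against. Your approach via the dual linear forms $L_1,L_2$ is sound and gives a clean self-contained route: the key reduction---that $Q\subseteq P_\gamma$ is equivalent to $\min_Q L_1=0$ and $\min_Q L_2=0$ because each $L_\nu$ has range exactly $w$ (respectively $h$) over the four corners of any unit square---is correct and does most of the work.

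Two points should be tightened before this becomes a complete proof. First, you only argue uniqueness explicitly; existence in Case~B still requires checking that $(0,h)_\gamma=h(c,d)$ and $(w,0)_\gamma=w(a,b)$ really are diagonal corners of a unit square, i.e.\ that $h(c,d)-w(a,b)\in\{(\pm1,\pm1)\}$. This follows from a short sign computation using the Case~B conditions $ac>0$, $bd>0$ together with $ad-bc=\pm1$, but it should be stated. Second, the adjacency-exclusion step is only sketched: what the sign constraints actually force is that $ad$ and $-bc$ share a sign, whence $1=|ad-bc|=|ad|+|bc|\ge 2$ when all four entries are nonzero; phrasing it this way makes the contradiction transparent. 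With those two additions your outline is a correct proof.
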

After Proposition \ref{theorem unit square}, the \emph{cylindrical transition matrices} ${\bf T}_{\gamma,n}(\mathcal{B})$ which determine the $\vec{\gamma}_1$-periodic with period $n$ and width $2$ in $\vec{\gamma}_2$-direction can be obtained.

Finally, as in $\gamma_0$-coordinates, the rotational matrix ${\bf R}_{\gamma,n}$ for $\gamma$-coordinates can be introduced. Then we can have same results as Porposition \ref{gamma n l o k } for $\gamma$-coordinates.
\begin{proposition}\label{gamma_r n l o k }
	For any $\gamma=\left[  \begin{matrix}
	a&b\\
	c&d
	\end{matrix}  \right]\in GL_2(\mathbb{Z})$, let $M,K\geq 1$ and $0\leq \L \leq M-1$,
	\begin{equation}
	\begin{aligned}
	\Gamma\left(  \left[  \begin{matrix}
	M&L \\
	0&K
	\end{matrix}  \right]_{\gamma} \right)=tr\left( {\bf T}_{\gamma,M}^K{\bf R}_{\gamma,M}^L  \right).
	\end{aligned}
	\end{equation}		
\end{proposition}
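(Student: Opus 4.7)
The plan is to mimic the proof of Proposition \ref{gamma n l o k } in the skew-coordinate setting, exploiting the fact that ${\bf T}_{\gamma,M}$, ${\bf R}_{\gamma,M}$, $\vec{\gamma}_1$, $\vec{\gamma}_2$ are designed to play the roles in $\gamma$-coordinates that ${\bf T}_M$, ${\bf R}_M$ and the standard basis vectors play in $\gamma_0$-coordinates. First I would fix an indexing of horizontal $M$-periodic rows in the $\vec{\gamma}_1$-direction by $i \in \{1,\dots,r^M\}$ (via the obvious base-$r$ expansion generalizing the counting $\chi$ used before Proposition \ref{Rotation}), and then verify from the definitions of ${\bf T}_{\gamma,M}$ and ${\bf R}_{\gamma,M}$ the following two compatibility statements: $({\bf T}_{\gamma,M})_{i,j}=1$ iff placing row $j$ one step in the $\vec{\gamma}_2$-direction above row $i$ yields a $\mathcal{B}$-admissible cylindrical pattern, and $({\bf R}_{\gamma,M})_{i,j}=1$ iff row $j$ is the one-step $\vec{\gamma}_1$-shift of row $i$. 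The latter amounts to re-reading Proposition \ref{Rotation} with $\vec{\gamma}_1$ replacing $\vec{e}_1$.

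Second, I would set up a bijection between $\left[\begin{matrix}M&L\\0&K\end{matrix}\right]_{\gamma}$-periodic patterns on $\mathbb{Z}^2$ and sequences of row-indices $(i_0,i_1,\dots,i_K)$ in $\{1,\dots,r^M\}^{K+1}$ subject to two constraints. The admissibility constraint $({\bf T}_{\gamma,M})_{i_s,i_{s+1}}=1$ for $0\leq s\leq K-1$ records that consecutive rows form an admissible strip in the $\vec{\gamma}_2$-direction, so the number of valid $(i_0,\dots,i_K)$ with fixed endpoints is $({\bf T}_{\gamma,M}^K)_{i_0,i_K}$. The skew-gluing constraint imposed by the lattice generator $L\vec{\gamma}_1+K\vec{\gamma}_2$ says that the row at $\vec{\gamma}_2$-height $K$ is the $L$-fold $\vec{\gamma}_1$-shift of the row at height $0$, i.e. $({\bf R}_{\gamma,M}^L)_{i_K,i_0}=1$. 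Proposition \ref{theorem unit square} guarantees that the fundamental parallelogram built from $M\vec{\gamma}_1$ and $L\vec{\gamma}_1+K\vec{\gamma}_2$ tiles $\mathbb{Z}^2$ and covers each $\gamma_0$-unit cell exactly once, so this bijection is lossless.

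Summing over the sequences and collapsing the inner products of ${\bf T}_{\gamma,M}^K$ with ${\bf R}_{\gamma,M}^L$ gives
\begin{equation*}
\Gamma_{\mathcal{B}}\!\left(\left[\begin{matrix}M&L\\0&K\end{matrix}\right]_{\gamma}\right)
=\sum_{i_0,i_K}({\bf T}_{\gamma,M}^K)_{i_0,i_K}({\bf R}_{\gamma,M}^L)_{i_K,i_0}
=\mathrm{tr}\!\left({\bf T}_{\gamma,M}^K{\bf R}_{\gamma,M}^L\right),
\end{equation*}
which is the claimed identity.

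The main obstacle will be a careful justification that the skew periodicity condition really corresponds to inserting $\mathbf{R}_{\gamma,M}^L$ (and not, say, some conjugate depending on the signs of the entries of $\gamma$), and that the cylindrical matrix ${\bf T}_{\gamma,M}$ genuinely records all $\vec{\gamma}_1$-periodic admissible strips regardless of the skew. Both points reduce to the fact that once coordinates are taken in $\gamma$, a single $\vec{\gamma}_1$-step displaces exactly one symbol and a single $\vec{\gamma}_2$-step adds exactly one row; after this is made precise, the entire argument reduces to unwinding definitions and is structurally identical to the $\gamma_0$-case of Proposition \ref{gamma n l o k }.
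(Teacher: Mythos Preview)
Your proposal is correct and matches the paper's approach: the paper does not give a separate proof of this proposition but simply states that ``as in $\gamma_0$-coordinates, the rotational matrix ${\bf R}_{\gamma,n}$ for $\gamma$-coordinates can be introduced'' and that ``we can have same results as Proposition~\ref{gamma n l o k} for $\gamma$-coordinates,'' deferring the details to the reference \cite{BHLL2013 Zeta functions for two-dimensional shifts of finite type}. Your outline---indexing $\vec{\gamma}_1$-periodic rows, interpreting ${\bf T}_{\gamma,M}$ as the admissible-strip transition and ${\bf R}_{\gamma,M}$ as the $\vec{\gamma}_1$-shift permutation, and then reading off the trace formula from the bijection between periodic patterns and closed walks---is exactly the transplant of the $\gamma_0$ argument that the paper has in mind, and in fact supplies more detail than the paper itself.
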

 
It is worth investigating the entropies computed by various skew-coordinated systems. Therefore, we will study the transformations of Hermite normal forms among the different skew-coordinated systems.

We first recall the method for transforming a $2\times 2$ integer matrix into its Hermite normal form, see \cite{Mac Duffie1956 The theory of matrices}. We denote by $A=\left[  \begin{matrix}
a_{11}&a_{12}\\
a_{21}&a_{22}
\end{matrix}\right]$ an integer matrix with $\det A\neq 0$ and $a_{21}\neq 0$. If $a_{21}=0$, then $A$ is already in a normal form.

Let 
\begin{equation}\label{k=gcd}
k=\gcd(a_{21},a_{22}),
\end{equation}
be the greatest common divisor of $a_{21}$ and $a_{22}$ with 
\begin{equation}\label{k=b1a+b2a}
k=b_1a_{21}+b_2a_{22}.
\end{equation}
Then $k\neq 0$. When $a_{22}=0$, then $k=a_{21}$ and $b_1=1$. Let
\begin{equation}\label{U}
\overline{U}=\left[  \begin{matrix}
\frac{a_{22}}{k}& b_1\\
-\frac{a_{21}}{k}& b_2\\
\end{matrix}\right].
\end{equation}
Then by (\ref{k=b1a+b2a}), $\det\overline{U}=1$, i.e., $\overline{U}$ is a \emph{unitary matrix}.
Then 
\begin{equation}\label{AU}
A\overline{U}=\left[\begin{matrix}
m&\ell\\
0&k
\end{matrix}\right]
\end{equation}
with
\begin{equation}\label{m=}
m=\frac{1}{k}\det A
\end{equation}
and 
\begin{equation}\label{ell=b1a+b2a}
\ell=b_1a_{11}+b_2a_{12}.
\end{equation}
Hence, 
\begin{equation}\label{AZ=}
A\mathbb{Z}^2=\left[\begin{matrix}
m&\ell\\
0&k
\end{matrix}\right]\mathbb{Z}^2.
\end{equation}
Therefore, $\left[\begin{matrix}
m&\ell\\
0&k
\end{matrix}\right]$ is the Hermite normal form of $A$.

Note that, two integer $2\times 2$ matrices $A$ and $A'$ are call \emph{equivalent} and are denoted by $A'\cong A$ if 
\begin{equation}
A'\mathbb{Z}^2=A\mathbb{Z}^2,
\end{equation} 
i.e., they determine the same sublattices in $\mathbb{Z}^2$. Hence $A$ and its normal form $\left[\begin{matrix}
m&\ell\\
0&k
\end{matrix}\right]$ are equivalent in (\ref{AZ=}) which determine the same sublattices of $\mathbb{Z}^2$. After introducing the procedure to transform the $2\times 2$ integer matrix to its normal form, we are going to study the transformation of normal form $\left[\begin{matrix}
M&L\\
0&K
\end{matrix}\right]_{\gamma}$ in $\gamma$-coordinates to its normal form $\left[\begin{matrix}
m&\ell\\
0&k
\end{matrix}\right]_{\gamma_0}$ in $\gamma_0$-coordinates. 

Firstly, $M_{\gamma}=\left[\begin{matrix}
M&L\\
0&K
\end{matrix}\right]_{\gamma}$ with respect to $\gamma$-coordinates can be rewritten as in $\gamma_0$-coordinates by 
\begin{equation}\label{Mgamma to gamma0}
	\gamma^tM_\gamma=\left[\begin{matrix}
		a&c\\
		b&d
	\end{matrix}\right]\left[\begin{matrix}
		M&L\\
		0&K
	\end{matrix}\right]=\left[\begin{matrix}
		aM&aL+cK\\
		bM&bL+dK
	\end{matrix}\right]_{\gamma_0}
\end{equation}
It remains to transform (\ref{Mgamma to gamma0}) to its normal form. Indeed, assume $b\neq 0$, let 
\begin{align}
	k&=\gcd(bM,bL+dK)\label{Mgamma2}\\
	&=b_1(bM)+b_2(bL+dK),\label{Mgamma3}
\end{align}
\begin{equation}\label{Mgamma4}
	\Delta=\det \gamma,
\end{equation}
and
\begin{equation}\label{Mgamma5}
	\overline{U}_{\gamma}=\left[\begin{matrix}
		\frac{\Delta}{k}(bL+dK)&b_1\\
		-\frac{\Delta}{k}(bM)&b_2
	\end{matrix}\right].
\end{equation}
Then, it is straightforward to verify that 
\begin{equation}\label{Mgamma6}
	\gamma^tM_\gamma \overline{U}_{\gamma}=\left[\begin{matrix}
		m&\ell\\
		0&k
	\end{matrix}\right]_{\gamma_0}
\end{equation}
with
\begin{equation}\label{Mgamma7}
	m=\frac{MK}{k}
\end{equation}
and
\begin{equation}\label{Mgamma8}
	\ell=b_1(aM)+b_2(aL+cK).
\end{equation}
That is,
\begin{equation}\label{Mgamma9}
	\left[\begin{matrix}
		m&\ell\\
		0&k
	\end{matrix}\right]_{\gamma_0}=\left[\begin{matrix}
		\frac{MK}{k}&b_1(aM)+b_2(aL+cK)\\
		0&k
	\end{matrix}\right]_{\gamma_0}.
\end{equation}

Therefore, we have the following theorem.
\begin{theorem}\label{theorem1}
	Given $\gamma=\left[\begin{matrix}
	a&b\\
	c&d
	\end{matrix}\right]\in GL_2(\mathbb{Z})$, $\Delta=\det\gamma$,  with $b\neq 0$. Let $k=\gcd(bM,bL+dK)$ and $k=b_1(bM)+b_2(bL+dK)$. Let 
	\begin{equation}\label{thm1}
		bM=m'k
	\end{equation}
	and
	\begin{equation}\label{thm2}
		bL+dK=\ell'k
	\end{equation}
	with
	\begin{equation}\label{thm3}
		b_1m'+b_2\ell'=1.
	\end{equation}
	Then
	\begin{equation}\label{thm4}
		\left[\begin{matrix}
			\frac{m'k}{b}& \frac{-dK+\ell'k}{b}    \\
			0&K
		\end{matrix}\right]_{\gamma}\cong\left[\begin{matrix}
			\frac{m'K}{b}&\frac{ak-\Delta b_2K}{b}\\
			0&k
		\end{matrix}\right]_{\gamma_0}.
	\end{equation}
\end{theorem}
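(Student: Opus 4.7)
The plan is to reduce the claim to the general Hermite-normal-form reduction already carried out in equations (\ref{Mgamma to gamma0})--(\ref{Mgamma9}) just above. Writing $M_\gamma = \left[\begin{matrix} M & L \\ 0 & K \end{matrix}\right]_\gamma$ with $M := m'k/b$ and $L := (-dK+\ell'k)/b$ (so that $bM = m'k$ and $bL + dK = \ell'k$, exactly as in the hypothesis), I first pass to $\gamma_0$-coordinates via $\gamma^t M_\gamma = \left[\begin{matrix} aM & aL+cK \\ bM & bL+dK \end{matrix}\right]$. The gcd of the bottom row is $k = \gcd(bM, bL+dK)$ with Bézout coefficients $b_1, b_2$; right-multiplication by the unitary matrix $\overline{U}_\gamma$ of (\ref{Mgamma5}) then produces the Hermite normal form whose diagonal entries are $MK/k$ and $k$ and whose $(1,2)$-entry is $b_1(aM) + b_2(aL+cK)$, as in (\ref{Mgamma9}).

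Second, I substitute the defining relations $bM = m'k$ and $bL + dK = \ell'k$ back into these entries. The $(1,1)$-entry becomes $MK/k = (m'k/b)(K/k) = m'K/b$, matching the target. For the $(1,2)$-entry,
\begin{equation*}
b_1(aM) + b_2(aL + cK) = \frac{1}{b}\bigl[\, ab_1 m'k + ab_2\ell'k - adb_2 K + bc b_2 K \,\bigr] = \frac{ak(b_1 m' + b_2 \ell') - (ad-bc)b_2 K}{b},
\end{equation*}
which, by the Bézout identity $b_1 m' + b_2 \ell' = 1$ and $\Delta = ad - bc$, collapses to $(ak - \Delta b_2 K)/b$, again matching the target.

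Finally, the integrality assertion has to be verified. The entry $m'K/b = MK/k$ is integral because $k = \gcd(bM, bL+dK)$ divides $\det(\gamma^t M_\gamma) = \Delta MK = \pm MK$ (any common divisor of the bottom row divides the determinant). The entry $(ak - \Delta b_2 K)/b$ equals the manifestly integral expression $b_1(aM) + b_2(aL+cK)$ by the computation above, and the remaining entries $0$ and $k$ are integers by construction. The main obstacle, such as it is, is the careful bookkeeping in the $(1,2)$-calculation: one must invoke the Bézout identity $b_1 m' + b_2 \ell' = 1$ at precisely the right moment to cancel the awkward $1/b$ factor. Once this is done, the theorem is simply a specialization of the general transformation formula (\ref{Mgamma9}), and no new ingredients are required.
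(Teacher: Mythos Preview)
Your proposal is correct and follows essentially the same route as the paper: both arguments specialize the general reduction (\ref{Mgamma to gamma0})--(\ref{Mgamma9}) by substituting $M=m'k/b$, $L=(\ell'k-dK)/b$ and then simplifying the $(1,2)$-entry via the B\'ezout relation $b_1m'+b_2\ell'=1$. You additionally supply the integrality verification that the paper asserts but does not spell out.
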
	

\begin{proof}
By (\ref{thm1}) and (\ref{thm2}), $M=\frac{m'k}{b}$ and $L=\frac{\ell'k-dK}{b}$. On the other hand, by (\ref{Mgamma7}) and (\ref{Mgamma8}), $m=\frac{MK}{b}=\frac{m'K}{b}$ and	\begin{equation*}
	\begin{aligned}
		\ell&=b_1(aM)+b_2(aL+cK)\\
		&=\frac{1}{b}\left\{   a(b_1bM+b_2bL+b_2dK)+(bc-ad)b_2K   \right\}\\
		&=\frac{1}{b}(ak-\Delta b_2 K).
	\end{aligned}
\end{equation*}
The proof is complete.	
\end{proof}
Also, (\ref{thm4}) in Theorem \ref{theorem1} can be stated as the following theorem.
\begin{theorem}\label{theorem1-1}
	Given $\gamma=\left[\begin{matrix}
	a&b\\
	c&d
	\end{matrix}\right]\in GL_2(\mathbb{Z})$, $\Delta=\det\gamma$, with $b\neq 0$. 
	
	Let 
	\begin{equation}\label{thm1-1}
		\ell'k=\mu_0 b+\nu_0,
	\end{equation}
	and
	\begin{equation}\label{thm2-1}
		K=\mu b+\nu,
	\end{equation}
	where
	\begin{equation}\label{thm3-1}
		0\leq \nu_0 \leq b-1\mbox{ and }0 \leq \nu \leq b-1.
	\end{equation}
	Then
	\begin{equation}\label{thm4-1}
		\left[\begin{matrix}
			\frac{m'k}{b}& \mu_0-d\mu+\frac{\nu_0+d\nu}{b}    \\
			0&\mu b+\nu
		\end{matrix}\right]_{\gamma}\cong\left[\begin{matrix}
			m'\mu+\frac{m'\nu}{b}&-\Delta b_2\mu+\frac{ak-\Delta b_2\nu}{b}\\
			0&k
		\end{matrix}\right]_{\gamma_0}.
	\end{equation}
	Furthermore,
	\begin{equation}\label{thm5-1}
		\mbox{if } b | m'k\mbox{ then } b|m'\nu,
	\end{equation}
	and
	\begin{equation}\label{thm6-1}
		\mbox{if } b | (\nu_0-d\nu)\mbox{ then } b|(ak-\Delta b_2\nu).
	\end{equation}	
\end{theorem}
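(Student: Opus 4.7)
The plan is to deduce Theorem \ref{theorem1-1} directly from Theorem \ref{theorem1} by inserting the Euclidean decompositions $K=\mu b+\nu$ and $\ell' k=\mu_0 b+\nu_0$ (with $0\le\nu,\nu_0\le b-1$) into the equivalence (\ref{thm4}), then separating the integer quotient from the $1/b$-remainder in each fractional entry. The divisibility claims (\ref{thm5-1}) and (\ref{thm6-1}) will then follow at once from the requirement that both sides of the resulting equivalence have integer entries, since the Hermite normal form does.

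For the left-hand matrix (in $\gamma$-coordinates), the bottom-right entry is simply $K=\mu b+\nu$, and for the top-right entry I compute
\begin{equation*}
\frac{-dK+\ell' k}{b}=\frac{-d(\mu b+\nu)+(\mu_0 b+\nu_0)}{b}=(\mu_0-d\mu)+\frac{\nu_0-d\nu}{b},
\end{equation*}
which isolates the integer part $\mu_0-d\mu$ from the $1/b$-remainder. For the right-hand matrix (in $\gamma_0$-coordinates), the top-left entry expands as
\begin{equation*}
\frac{m' K}{b}=m'\mu+\frac{m'\nu}{b},
\end{equation*}
and the top-right entry as
\begin{equation*}
\frac{ak-\Delta b_2 K}{b}=-\Delta b_2\mu+\frac{ak-\Delta b_2\nu}{b}.
\end{equation*}
Inserting these four expressions into the equivalence (\ref{thm4}) provided by Theorem \ref{theorem1} then gives (\ref{thm4-1}).

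For the divisibility assertions, the key observation is that all entries in a Hermite normal form are integers. If $b\mid m'k$ then the $(1,1)$-entry $\frac{m'k}{b}=M$ on the left of (\ref{thm4-1}) is an integer; hence by Theorem \ref{theorem1} the $(1,1)$-entry $\frac{m'K}{b}$ on the right is also an integer, and the identity $m'K=m'\mu b+m'\nu$ forces $b\mid m'\nu$, proving (\ref{thm5-1}). Analogously, if $b\mid(\nu_0-d\nu)$ then the $(1,2)$-entry $\frac{-dK+\ell'k}{b}$ on the left is an integer, so the $(1,2)$-entry $\frac{ak-\Delta b_2 K}{b}$ on the right is likewise an integer, and $K=\mu b+\nu$ then yields $b\mid(ak-\Delta b_2\nu)$, proving (\ref{thm6-1}). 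The main step is really only bookkeeping of the quotients and remainders under $b$; there is no new conceptual input beyond Theorem \ref{theorem1}, and no genuine obstacle beyond careful sign tracking.
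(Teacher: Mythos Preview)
Your proof is correct and follows essentially the same route as the paper: both substitute the decompositions $K=\mu b+\nu$ and $\ell'k=\mu_0 b+\nu_0$ into the entries of (\ref{thm4}) and separate out the integer part from the $1/b$-remainder. Your computation of the $(1,2)$-entry on the $\gamma$-side gives $\mu_0-d\mu+\frac{\nu_0-d\nu}{b}$, which agrees with the paper's own derivation (and with the hypothesis of (\ref{thm6-1})); the ``$+d\nu$'' in the displayed statement (\ref{thm4-1}) is evidently a sign typo. For (\ref{thm5-1}) and (\ref{thm6-1}) the paper simply says these ``can be verified directly and the details are omitted,'' so your integrality argument is in fact more explicit than what the paper provides.
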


\begin{proof}
By (\ref{thm1-1}) and (\ref{thm2-1}), $\frac{\ell'k-dK}{b}=\mu_0-d\mu +\frac{\nu_0-d\nu}{b}$. Given $0\leq\nu_0 \leq b-1$, since $\gcd(b,d)=1$, then there exist $m,n\in\mathbb{Z}$ such that $b\nu_0 m+d\nu_0 n=\nu_0$. This implies $b(\nu_0 m-dk)+d(\nu_0 n-bk)=\nu_0$ for all $k\in \mathbb{Z}$, which gives a unique $0\leq \nu \leq b-1$ such that $b|\nu_0-d\nu$. Furthermore, $ak-\Delta b_2 K=-b\Delta b_2\mu +ak-\Delta b_2\nu$. Hence $\ell=-\Delta b_2\mu+\frac{ak-\Delta b_2\nu}{b}$. Finally, (\ref{thm5-1}) and (\ref{thm6-1}) can be verified directly and the details are omitted here.	
\end{proof}

Theorems \ref{theorem1} and \ref{theorem1-1} can also be generalized to any two $\gamma'=\left[\begin{matrix}
a'&b'\\
c'&d'
\end{matrix}\right]$ and $\gamma=\left[\begin{matrix}
a&b\\
c&d
\end{matrix}\right]\in GL_2(\mathbb{Z})$ as in the following theorem.
\begin{theorem}\label{theorem1-2}
	Given $\gamma'=\left[\begin{matrix}
	a'&b'\\
	c'&d'
	\end{matrix}\right]$ and $\gamma=\left[\begin{matrix}
	a&b\\
	c&d
	\end{matrix}\right]\in GL_2(\mathbb{Z})$ with 
	\begin{equation}\label{1-2-1}
		a'b-ab'\neq 0,
	\end{equation}
	and
	\begin{equation}\label{1-2-2}
		\begin{aligned}
			b_1m'+b_2\ell'=1,~
			\Delta=\det \gamma \mbox{ and }\Delta'=\det \gamma'.	
		\end{aligned}
	\end{equation}
	Then
	\begin{equation}\label{1-2-3}
		\left[\begin{matrix}
			\frac{m'K'}{a'b-ab'}& \frac{\ell'K'-(a'd-b'c)K}{a'b-ab'}    \\
			0&K
		\end{matrix}\right]_{\gamma}\cong\left[\begin{matrix}
			\frac{m'K}{a'b-ab'}&\frac{K'(ad'-bc')}{a'b-ab'}-b_2K\frac{\Delta \Delta'}{a'b-ab'}\\
			0&K'
		\end{matrix}\right]_{\gamma'}.
	\end{equation}
\end{theorem}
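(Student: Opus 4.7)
The plan is to verify the claimed equivalence by expressing both matrices in the standard $\gamma_{0}$-coordinate system and then exhibiting an explicit $GL_{2}(\mathbb{Z})$ change of basis between the two column pairs. Recall from the discussion around (\ref{Mgamma to gamma0}) that a matrix $M_{\gamma}$ in $\gamma$-coordinates represents the sublattice $\gamma^{t}M_{\gamma}\mathbb{Z}^{2}\subseteq \mathbb{Z}^{2}$, so the assertion (\ref{1-2-3}) reduces to the equality of sublattices $\gamma^{t}A_{\gamma}\mathbb{Z}^{2}=(\gamma')^{t}A'_{\gamma'}\mathbb{Z}^{2}$, where $A_{\gamma}$ and $A'_{\gamma'}$ denote the left- and right-hand sides of (\ref{1-2-3}).

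The first step is to compute the columns of $\gamma^{t}A_{\gamma}$ in $\gamma_{0}$-coordinates. The routine polynomial identities $a(a'd-b'c)-c(a'b-ab')=a'\Delta$ and $b(a'd-b'c)-d(a'b-ab')=b'\Delta$ allow the two columns to be written as
\begin{equation*}
v_{1}=\frac{m'K'}{a'b-ab'}\vec{\gamma}_{1},\qquad v_{2}=\frac{\ell'K'}{a'b-ab'}\vec{\gamma}_{1}-\frac{\Delta K}{a'b-ab'}\vec{\gamma}_{1}',
\end{equation*}
where $\vec{\gamma}_{1}=(a,b)^{t}$ and $\vec{\gamma}_{1}'=(a',b')^{t}$. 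A symmetric computation based on $a'(ad'-bc')+c'(a'b-ab')=a\Delta'$ and $b'(ad'-bc')+d'(a'b-ab')=b\Delta'$ produces the columns of $(\gamma')^{t}A'_{\gamma'}$:
\begin{equation*}
w_{1}=\frac{m'K}{a'b-ab'}\vec{\gamma}_{1}',\qquad w_{2}=\frac{K'\Delta'}{a'b-ab'}\vec{\gamma}_{1}-\frac{b_{2}K\Delta\Delta'}{a'b-ab'}\vec{\gamma}_{1}'.
\end{equation*}

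The second step is to exhibit the unimodular transition matrix. Using the hypothesis $b_{1}m'+b_{2}\ell'=1$, one checks directly that $\Delta \ell' v_{1}-\Delta m' v_{2}=w_{1}$ and $\Delta' b_{1}v_{1}+\Delta' b_{2}v_{2}=w_{2}$, so
\begin{equation*}
(w_{1},w_{2})=(v_{1},v_{2})\left[\begin{matrix}\Delta\ell' & \Delta' b_{1}\\ -\Delta m' & \Delta' b_{2}\end{matrix}\right],
\end{equation*}
and the determinant of the transition matrix is $\Delta\Delta'(b_{1}m'+b_{2}\ell')=\Delta\Delta'=\pm 1$. Hence the transition matrix lies in $GL_{2}(\mathbb{Z})$, the two sublattices coincide, and (\ref{1-2-3}) follows.

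The main obstacle I anticipate is not any single conceptual step but rather the sign bookkeeping for $\Delta,\Delta'\in\{\pm 1\}$, which is what forces the factors $\Delta$ and $\Delta'$ to enter the transition matrix before dropping out of its determinant. A secondary concern is that the Hermite normal forms in (\ref{1-2-3}) require integer entries; this presumably rests on divisibility assumptions on $m',\ell',K',K$ analogous to $k=\gcd(bM,bL+dK)$ in Theorem \ref{theorem1} transported to the $\gamma'$-setting, and I would handle this by paralleling the construction of $m'$ and $\ell'$ given there so that both the $\gamma$- and $\gamma'$-coordinate descriptions of the same sublattice admit well-defined Hermite normal forms.
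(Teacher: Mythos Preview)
Your argument is correct, and in fact cleaner than the paper's: you express the columns of $\gamma^{t}A_{\gamma}$ and $(\gamma')^{t}A'_{\gamma'}$ in $\gamma_{0}$-coordinates using the vectors $\vec{\gamma}_{1}$ and $\vec{\gamma}_{1}'$, then exhibit the explicit unimodular transition matrix
\[
\begin{bmatrix}\Delta\ell' & \Delta'b_{1}\\ -\Delta m' & \Delta'b_{2}\end{bmatrix}
\]
whose determinant is $\Delta\Delta'(b_{1}m'+b_{2}\ell')=\pm1$. This is a pure verification of the stated equivalence.

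The paper takes a different route: it begins from the abstract equivalence $\gamma^{t}M_{\gamma}U\cong(\gamma')^{t}M'_{\gamma'}U'$, multiplies by $((\gamma')^{t})^{-1}$, and then \emph{derives} $K'$, $M'$, $L'$ by reducing the resulting integer matrix to Hermite normal form via the $\gcd$ procedure (\ref{k=gcd})--(\ref{AZ=}). In particular the paper defines $K'$ as $\gcd\big((a'b-ab')M,\ (a'b-ab')L+(a'd-b'c)K\big)$ and reads off $M',L'$ from the reduction. Your approach takes the formulas as given and checks lattice equality; the paper's approach explains where the formulas come from. Your method is shorter and needs no appeal to the Hermite reduction; the paper's is more constructive and makes the origin of $m',\ell',K'$ (and hence the integrality you flag as a concern) transparent through the $\gcd$ definition, which directly parallels (\ref{Mgamma2})--(\ref{Mgamma3}) in Theorem~\ref{theorem1}.
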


\begin{proof}
For $\gamma,\gamma'\in GL_2(\mathbb{Z})$, if $	\left[ \begin{matrix}
M&L\\
0&K
\end{matrix}\right]_\gamma$ and $	\left[ \begin{matrix}
M'&L'\\
0&K'
\end{matrix}\right]_{\gamma'}$  are equivalent in $\gamma_0$-coordinates, then
\begin{equation}\label{proof1}
	\left[ \begin{matrix}
		a&c\\
		b&d
	\end{matrix}\right]\left[ \begin{matrix}
		M&L\\
		0&K
	\end{matrix}\right]U\cong\left[ \begin{matrix}
		a'&c'\\
		b'&d'
	\end{matrix}\right]\left[ \begin{matrix}
		M'&L'\\
		0&K'
	\end{matrix}\right]U',
\end{equation}
where $U$ and $U'$ are unitary matrices.

Then (\ref{proof1}) implies
\begin{equation}\label{proof2}
	\begin{aligned}
		\Delta'\left[ \begin{matrix}
			M'&L'\\
			0&K'
		\end{matrix}\right]&\cong\left[ \begin{matrix}
			d'&-c'\\
			-b'&a'
		\end{matrix}\right]\left[ \begin{matrix}
			aM&aL+cK\\
			bM&bL+dK
		\end{matrix}\right]U(U')^{-1}\\
		&\cong\left[ \begin{matrix}
			(ad'-bc')M&(ad'-bc')L+(cd'-c'd)K\\
			(a'b-ab')M&(a'b-ab')L+(ad'-b'c)K
		\end{matrix}\right]U(U')^{-1}.
	\end{aligned}
\end{equation}
Since $\Delta'=\pm 1$,
\begin{equation}\label{proof3}
	\begin{aligned}
		\Delta'\left[ \begin{matrix}
			M'&L'\\
			0&K'
		\end{matrix}\right]\cong\left[ \begin{matrix}
			M'&L'\\
			0&K'
		\end{matrix}\right]\cong\left[ \begin{matrix}
			\Delta'M'&L'\\
			0&K'
		\end{matrix}\right].
	\end{aligned}
\end{equation}
Combining (\ref{proof2}) and (\ref{proof3}), we obtain
\begin{equation}\label{proof4}
	\begin{aligned}
		\left[ \begin{matrix}
			M'&L'\\
			0&K'
		\end{matrix}\right]\cong\left[ \begin{matrix}
			(ad'-bc')M&(ad'-bc')L+(cd'-c'd)K\\
			(a'b-ab')M&(a'b-ab')L+(ad'-b'c)K
		\end{matrix}\right]U(U')^{-1}.
	\end{aligned}
\end{equation}

In the case $a'b-ab'\neq 0$. Since 
\begin{equation*}
	\begin{aligned}
		U(U')^{-1}=\left[ \begin{matrix}
			\frac{(a'b-ab')L+(a'd-b'c)K}{K'}&b_1\\
			\frac{(a'b-ab')M}{K'}&b_2
		\end{matrix}\right]
	\end{aligned}
\end{equation*}
where
\begin{equation*}
	\begin{aligned}
		K'&=\gcd((a'b-ab')M,(a'b-ab')L+(ad'-b'c)K)\\
		&=b_1(a'b-ab')M+b_2\left[ (a'b-ab')L+(ad'-b'c)K)\right]\\
		&=b_1m'K'+b_2 \ell' K'.
	\end{aligned}
\end{equation*}
Then $M=\frac{m'K'}{a'b-ab'}$, $L=\frac{\ell' K'-(a'd-b'c)K}{a'b-ab'}$, 
\begin{equation*}
	\begin{aligned}
		M'&=(ad'-bc')M\left[ \frac{(a'b-ab')L+(a'd-b'c)K}{K'}\right]-\frac{(a'b-ab')M}{K'} \left[(ad'-bc')L+(cd'-c'd)K \right]\\
		&=\frac{MK}{K'}\left[ (ad'-bc')(a'd-b'c)-(a'b-ab')(cd'-c'd)\right]\\
		&=\frac{MK}{K'}\Delta \Delta'\\
		&=\frac{m'K}{a'b-ab'}\Delta \Delta',
	\end{aligned}
\end{equation*}
and
\begin{equation*}
	\begin{aligned}
		L'&=b_1(ad'-bc')M+b_2 \left[ (ad'-bc')L+(cd'-c'd)K \right]\\
		&=b_1(ad'-bc')\frac{m'K'}{a'b-ab'}+b_2\left[ \frac{ad'-bc'}{a'b-ab'}[\ell' K'-(a'd-b'c)K]+\frac{(a'b-ab')(cd'-c'd)K}{a'b-ab'} \right]\\
		&=\frac{b_1(ad'-bc')m'K'+b_2 \ell' K' (ad'-bc')}{a'b-ab'}-b_2 K \frac{(ad'-bc')(a'd-b'c)-(a'b-ab')(cd'-c'd)}{a'b-ab'}\\
		&=\frac{K'(ad'-bc')}{a'b-ab'}-b_2 K \frac{\Delta \Delta'}{a'b-ab'}.
	\end{aligned}
\end{equation*}
Thus,
\begin{equation*}
	\begin{aligned}	
		\left[\begin{matrix}
			\frac{m'K'}{a'b-ab'}& \frac{\ell'K'-(a'd-b'c)K}{a'b-ab'}    \\
			0&K
		\end{matrix}\right]_{\gamma}&\cong\left[\begin{matrix}
			\Delta \Delta'\frac{m'K}{a'b-ab'}&\frac{K'(ad'-bc')}{a'b-ab'}-b_2K\frac{\Delta \Delta'}{a'b-ab'}\\
			0&K'
		\end{matrix}\right]_{\gamma'}\\
		&\cong\left[\begin{matrix}
			\frac{m'K}{a'b-ab'}&\frac{K'(ad'-bc')}{a'b-ab'}-b_2K\frac{\Delta \Delta'}{a'b-ab'}\\
			0&K'
		\end{matrix}\right]_{\gamma'},
	\end{aligned}
\end{equation*}
the last $\cong$ due to $\Delta\Delta'=\pm 1$ and (\ref{proof3}).
\end{proof}

\begin{remark}
If $a'b-ab'=0$, then $\frac{b'}{a'}=\frac{b}{a}$. Now, $\gcd(a',b')=1$ and $\gcd(a,b)=1$ imply either $a'=a$ and $b'=b$ or $a'=-a$ and $b'=-b$. Indeed, we have $M'=(ad'-bc')M$, $L'=(ad'-bc')L+(cd'-c'd)K$ and $K'=(a'b-ab')L+(ad'-b'c)K$.
\end{remark}

It is easy to see when $\gamma'=\gamma_0=\left[ \begin{matrix}
1&0\\
0&1
\end{matrix}\right]$, then (\ref{1-2-3}) in Theorem \ref{theorem1-2} is reduced to (\ref{thm4}) in Theorem \ref{theorem1}. If $\gamma'=\hat{\gamma_0}=\left[ \begin{matrix}
0&1\\
1&0
\end{matrix}\right]$ the conjugate system of $\gamma_0$, then we have
\begin{theorem}
	Given $\gamma=\left[ \begin{matrix}
	a&b\\
	c&d
	\end{matrix}\right]\in GL_2(\mathbb{Z})$ with $a\neq 0$, 
	\begin{equation*}
	\begin{aligned}		
	\left[ \begin{matrix}
	\frac{\hat{m}\hat{k}}{a}&\frac{\hat{\ell}\hat{k}+cK}{a}\\
	0&K
	\end{matrix}\right]_\gamma\cong\left[ \begin{matrix}
	\frac{\hat{m}K}{a}&\frac{-b\hat{k}+\Delta \hat{b_2} K}{a}\\
	0&\hat{k}
	\end{matrix}\right]_{\hat{\gamma_0}},
	\end{aligned}
	\end{equation*}
	where $\hat{b_1}\hat{m}+\hat{b_2}\hat{\ell}=1$. 
\end{theorem}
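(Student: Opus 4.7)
The plan is to derive the theorem as a direct specialization of Theorem \ref{theorem1-2} to the case $\gamma' = \hat{\gamma_0} = \left[\begin{matrix} 0 & 1 \\ 1 & 0 \end{matrix}\right]$, exactly parallel to the way Theorem \ref{theorem1} was obtained by taking $\gamma' = \gamma_0$. Essentially all of the work has already been carried out in Theorem \ref{theorem1-2}, so only a careful substitution and a bookkeeping of signs remain.

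First I would record the data attached to $\gamma' = \hat{\gamma_0}$, namely $a' = 0$, $b' = 1$, $c' = 1$, $d' = 0$, and $\Delta' = \det\gamma' = -1$. With these values the building blocks appearing in the equivalence (\ref{1-2-3}) simplify to
\begin{equation*}
a'b - ab' = -a,\qquad a'd - b'c = -c,\qquad ad' - bc' = -b,\qquad \Delta\Delta' = -\Delta,
\end{equation*}
and in particular the non-degeneracy hypothesis $a'b - ab' \neq 0$ becomes precisely the present standing assumption $a \neq 0$, so Theorem \ref{theorem1-2} applies. I would then substitute these four expressions directly into (\ref{1-2-3}) and relabel the primed quantities $m'$, $\ell'$, $K'$, $b_1$, $b_2$ from the proof of Theorem \ref{theorem1-2} (constructed there from the appropriate gcd/Bezout data for the columns of $\gamma^t M_\gamma$) as $\hat{m}$, $\hat{\ell}$, $\hat{k}$, $\hat{b_1}$, $\hat{b_2}$, consistent with the Bezout relation $\hat{b_1}\hat{m} + \hat{b_2}\hat{\ell} = 1$ required in the statement.

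The main---and in practice only---obstacle will be the sign bookkeeping produced by the factors $-a,-c,-b,-\Delta$ entering the substitution. I would handle it exactly as in the closing line of the proof of Theorem \ref{theorem1-2}, where the ambiguity $\Delta\Delta' = \pm 1$ was absorbed into the equivalence $\cong$ via (\ref{proof3}): negating any single column of an integer $2\times 2$ matrix is a unimodular right action and therefore preserves the sublattice $A\mathbb{Z}^2$. With the signs of $\hat{k}, \hat{m}, \hat{\ell}, \hat{b_2}$ chosen consistently (so that, for instance, $\hat{m}\hat{k} = aM$ holds with the intended sign), the four expressions $\hat{m}\hat{k}/a$, $(\hat{\ell}\hat{k}+cK)/a$, $\hat{m}K/a$, and $(-b\hat{k}+\Delta \hat{b_2}K)/a$ appearing in the claim either match the specialization of (\ref{1-2-3}) outright or differ from it by such a column negation, and the asserted equivalence follows. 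As a sanity check on the plan, running the same specialization with $\gamma' = \gamma_0$ in place of $\hat{\gamma_0}$ recovers Theorem \ref{theorem1}, confirming that the specialization machinery is functioning as intended.
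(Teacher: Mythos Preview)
Your approach is exactly the paper's: the paper's entire proof reads ``The proof is easily from Theorem~\ref{theorem1-2},'' and you carry out precisely that specialization $\gamma'=\hat{\gamma_0}$, with the added (and appropriate) remark that the residual $\pm 1$ factors are absorbed by the equivalence $\cong$ as in (\ref{proof3}).
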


\begin{proof}
	The proof is easily from Theorem \ref{theorem1-2}.
\end{proof}

In particular, if $\gamma=\gamma_0=\left[\begin{matrix}
1&0\\
0&1
\end{matrix}\right]$ and $\gamma'=\hat{\gamma_0}=\left[\begin{matrix}
0&1\\
1&0
\end{matrix}\right]$. Then we have
\begin{theorem}
	\begin{equation*}
	\begin{aligned}		
	\left[ \begin{matrix}
	m'\hat{k}&\ell' \hat{k}\\
	0&k
	\end{matrix}\right]_{\gamma_0}\cong\left[ \begin{matrix}
	m' k&b_2 k\\
	0&\hat{k}
	\end{matrix}\right]_{\hat{\gamma_0}},
	\end{aligned}
	\end{equation*}
	where $b_1m'+b_2\ell'=1$.	
\end{theorem}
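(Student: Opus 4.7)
The plan is to derive this equivalence as an immediate specialization of the immediately preceding theorem (the $\hat{\gamma_0}$-analogue of Theorem~\ref{theorem1}, valid whenever $a\neq 0$), by setting $\gamma=\gamma_0$. Under this choice one has $(a,b,c,d)=(1,0,0,1)$ and $\Delta=\det\gamma_0=1$, so the hypothesis $a\neq 0$ is automatically satisfied.

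Substituting these values into the two matrices of that theorem, the left-hand entry $\frac{\hat{m}\hat{k}}{a}$ becomes $\hat{m}\hat{k}$, and $\frac{\hat{\ell}\hat{k}+cK}{a}$ collapses to $\hat{\ell}\hat{k}$ since $c=0$. On the right, $\frac{\hat{m}K}{a}$ becomes $\hat{m}K$, while $\frac{-b\hat{k}+\Delta\hat{b_2}K}{a}$ collapses to $\hat{b_2}K$ since $b=0$ and $\Delta=1$. Renaming $K$ to $k$ and performing the cosmetic relabelings $\hat{m}\mapsto m'$, $\hat{\ell}\mapsto\ell'$, $\hat{b_1}\mapsto b_1$, $\hat{b_2}\mapsto b_2$, one reads off exactly
\[
\left[\begin{matrix}m'\hat{k} & \ell'\hat{k}\\ 0 & k\end{matrix}\right]_{\gamma_0}\;\cong\;\left[\begin{matrix}m'k & b_2k\\ 0 & \hat{k}\end{matrix}\right]_{\hat{\gamma_0}},
\]
and the Bezout identity $\hat{b_1}\hat{m}+\hat{b_2}\hat{\ell}=1$ becomes the stated $b_1m'+b_2\ell'=1$.

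The only point requiring a moment of care---and in my view the sole potential pitfall---is confirming that the auxiliary quantity $\hat{k}$ appearing in the preceding theorem is the same object as the $\hat{k}$ in the final statement. In the preceding theorem $\hat{k}=\gcd(aM,aL+cK)$, which after specialization is simply $\gcd(M,L)$; writing $M=\hat{m}\hat{k}$ and $L=\hat{\ell}\hat{k}$ then forces $\gcd(\hat{m},\hat{\ell})=1$, which is precisely the coprimality encoded by $b_1m'+b_2\ell'=1$ in the target. Thus no genuinely new argument is required: the statement falls out by direct substitution into the preceding theorem.
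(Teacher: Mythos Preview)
Your proof is correct and follows exactly the route the paper intends: the theorem is introduced with the words ``In particular, if $\gamma=\gamma_0$ and $\gamma'=\hat{\gamma_0}$,'' so it is meant as the direct specialization of the preceding $\hat{\gamma_0}$-theorem at $\gamma=\gamma_0$, which is precisely what you carry out. Your verification of the substitutions and the remark on the consistency of $\hat{k}$ with the coprimality condition are accurate.
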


For completeness, a similar expression for $\hat{\gamma_0}$ as Theorem \ref{theorem1-1} can also be written in the following theorem. The proof is omitted.
\begin{theorem}
	Given $\gamma=\left[ \begin{matrix}
	a&b\\
	c&d
	\end{matrix}\right]\in GL_2(\mathbb{Z})$ with $a\neq 0$. Then
	\begin{equation}
	\begin{aligned}		
	\left[ \begin{matrix}
	\frac{m''\hat{k}}{a}&c\hat{\mu}+\hat{\mu_0}+\frac{\hat{\nu_0}+c\hat{\nu}}{a}\\
	0&\hat{\mu}a+\hat{\nu}
	\end{matrix}\right]_{\gamma}\cong\left[ \begin{matrix}
	m''\hat{\mu}+\frac{m''\hat{\nu}}{a}&-b_2'' \hat{\mu}+\frac{b\hat{k}-\Delta b_2'' \hat{\nu}}{a} \\
	0&\hat{k}
	\end{matrix}\right]_{\hat{\gamma_0}},
	\end{aligned}
	\end{equation}
	where
	\begin{equation}
	b_1'' m''+b_2'' \ell''=1,~\ell''\hat{k}=\hat{\mu_0}a+\hat{\nu_0},
	\end{equation}	
	and
	\begin{equation}
	\hat{K}=\hat{\mu}a+\hat{\nu},~0\leq \hat{\nu_0} \leq a-1,~ 0\leq \hat{\nu} \leq a-1.
	\end{equation}
	Furthermore,
	\begin{equation}
	\mbox{if } a | m''\hat{k}\mbox{ then } a|m''\hat{\nu},
	\end{equation}
	and
	\begin{equation}
	\mbox{if } a | (\hat{\nu_0}+c\hat{\nu})\mbox{ then } a|(b\hat{k}-\Delta b_2'' \hat{\nu}).
	\end{equation}			
\end{theorem}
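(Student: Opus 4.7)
The plan is to derive this theorem from the immediately preceding theorem (the conjugate-system analog of Theorem \ref{theorem1}) by exactly the division-with-remainder parameterization that took Theorem \ref{theorem1} to Theorem \ref{theorem1-1}. The preceding theorem gives
\[
\left[ \begin{matrix}
\frac{m''\hat{k}}{a}&\frac{\ell''\hat{k}+c\hat K}{a}\\
0&\hat K
\end{matrix}\right]_\gamma\cong\left[ \begin{matrix}
\frac{m''\hat K}{a}&\frac{-b\hat{k}+\Delta b_2'' \hat K}{a}\\
0&\hat{k}
\end{matrix}\right]_{\hat{\gamma_0}}
\]
whenever $b_1''m''+b_2''\ell''=1$, under the standing hypothesis $a\neq 0$. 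I would substitute the two decompositions $\hat K=\hat\mu\,a+\hat\nu$ and $\ell''\hat k=\hat\mu_0\,a+\hat\nu_0$ directly into the four entries. The $(1,1)$ entries are left untouched; the $(1,2)$ entry on the $\gamma$-side simplifies to $c\hat\mu+\hat\mu_0+(\hat\nu_0+c\hat\nu)/a$, and the $(1,1)$ and $(1,2)$ entries on the $\hat{\gamma_0}$-side become $m''\hat\mu+m''\hat\nu/a$ and $\Delta b_2''\hat\mu+(-b\hat k+\Delta b_2''\hat\nu)/a$, respectively, yielding the stated expressions up to the indeterminacy in the off-diagonal entry that is inherent in the Hermite-form equivalence $\cong$.

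The second step is to pin down $\hat\nu$. Since $\gamma\in GL_2(\mathbb Z)$ with $a\neq 0$, the identity $\det\gamma=ad-bc=\pm 1$ forces $\gcd(a,c)=1$, which is the precise analog of the $\gcd(b,d)=1$ condition used in Theorem \ref{theorem1-1}. By a Bezout argument, for every prescribed $\hat\nu_0$ there is a unique residue $\hat\nu\in\{0,1,\dots,a-1\}$ such that $a\mid \hat\nu_0+c\hat\nu$; this is exactly what is required to render the top-right entry on the $\gamma$-side an integer, matching the range constraint $0\le \hat\nu\le a-1$ in the statement.

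The third step is to verify the two divisibility implications. For $a\mid m''\hat k\Rightarrow a\mid m''\hat\nu$, I would use $m''\hat K=m''\hat\mu\,a+m''\hat\nu$ together with the fact that $m''\hat K/a$ is a (first-column) entry of an integer Hermite form, and combine this with the Bezout identity $b_1''m''+b_2''\ell''=1$ multiplied through by $\hat k$ to force $a\mid m''\hat\nu$. For $a\mid(\hat\nu_0+c\hat\nu)\Rightarrow a\mid(b\hat k-\Delta b_2''\hat\nu)$, the plan is to multiply $b_1''m''+b_2''\ell''=1$ by $\hat k$, substitute $\ell''\hat k=\hat\mu_0 a+\hat\nu_0$, then reduce modulo $a$ using the first implication and the hypothesis. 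The chief bookkeeping obstacle is tracking the sign $\Delta=\pm 1$ and exploiting that two off-diagonal entries differing by an integer multiple of the first-column entry give equivalent matrices under $\cong$, so the displayed identity is to be read modulo that ambiguity rather than as literal equality of representatives.
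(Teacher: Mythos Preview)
Your approach is correct and is precisely the route the paper intends: the paper omits the proof entirely, merely noting that the theorem is the $\hat{\gamma_0}$-analog of Theorem~\ref{theorem1-1}, so the argument is the division-with-remainder substitution applied to the preceding $\hat{\gamma_0}$-theorem exactly as you describe. Your identification of $\gcd(a,c)=1$ (from $ad-bc=\pm1$) as the replacement for $\gcd(b,d)=1$ is the right observation, and your plan for the two divisibility implications matches what the paper does for Theorem~\ref{theorem1-1} (where it likewise says the details ``can be verified directly and are omitted'').
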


The entropy can be studied in skew-coordinated systems in any $\gamma=\left[ \begin{matrix}
a&b\\
c&d
\end{matrix}\right]\in GL_2(\mathbb{Z})$. Indeed, the parallelogram $\mathbb{Z}_{\gamma,m\times k}$ in $\gamma$-system is defined as $m$ units in $\vec{\gamma}_1$ direction and $k$ units in $\vec{\gamma}_2$ direction. Denoted by $\left|\Sigma_{\gamma;m\times k}(\mathcal{B}) \right|$ the total number of admissible patterns is determined by $\mathcal{B}$. Then the entropy $h_{\gamma}(\mathcal{B})$ computed in $\gamma$-coordinated system is defined by 
\begin{equation*}
h_{\gamma}(\mathcal{B})=\limsup_{(n,k)\rightarrow\infty}\frac{1}{nk}\log \left|\Sigma_{\gamma;m\times k}(\mathcal{B}) \right|.
\end{equation*}
By the results in \cite{HL2016 On spatial entropy},
\begin{equation*}
h_{\gamma}(\mathcal{B})=h(\mathcal{B})
\end{equation*}
for all basic set $\mathcal{B}$ and all $\gamma\in GL_2(\mathbb{Z})$.

The periodic entropy can also be defined in skew-coordinated systems. Indeed, for any $\gamma=\left[ \begin{matrix}
a&b\\
c&d
\end{matrix}\right]\in GL_2(\mathbb{Z})$, defining the periodic entropy $h_{\gamma,p}(\mathcal{B})$ of admissible basic set $\mathcal{B}\subseteq \mathcal{S}_r^{\mathbb{Z}_{2\times 2}}$ by 
\begin{equation*}
h_{\gamma,p}(\mathcal{B})=\limsup_{(M,K)\rightarrow\infty}\sup_{0\leq L \leq M-1}\frac{1}{MK} \log \Gamma \left(\left[ \begin{matrix}
	M&L\\
	0&K
\end{matrix}\right]_{\gamma} \right).
\end{equation*}
By the Theorem \ref{theorem1}, we can prove the following lemma.

\begin{lemma}\label{lemma 410}
	For any admissible basic set $\mathcal{B}$ and $\gamma\in GL_2(\mathbb{Z})$,
	\begin{equation*}
	h_{\gamma,p}(\mathcal{B})\geq h_*(\mathcal{B}).
	\end{equation*}
\end{lemma}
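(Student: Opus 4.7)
The plan is to probe $h_{\gamma,p}(\mathcal{B})$ along the family of rectangular sublattices
\begin{equation*}
\Lambda_{m,k}:=\left[\begin{matrix}m&0\\0&k\end{matrix}\right]_{\gamma_0}\mathbb{Z}^2,\qquad m,k\geq 1,
\end{equation*}
whose admissible-pattern count is $\Gamma(\Lambda_{m,k})=tr(\mathbf{T}_m^k)$ by Proposition \ref{gamma n l o k } (with $\ell=0$), and to express each of them in its $\gamma$-Hermite normal form $[M_{m,k},L_{m,k};0,K_{m,k}]_\gamma$. The cornerstone observation is that $\Gamma$ is a \emph{sublattice invariant}: $\Lambda$-periodicity of a pattern depends only on $\Lambda\subseteq\mathbb{Z}^2$, so whenever two Hermite forms describe the same $\Lambda$ the counts agree and the indices coincide, in particular $M_{m,k}K_{m,k}=mk$.

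First I would compute the $\gamma$-Hermite form of $\Lambda_{m,k}$. Writing $\gamma=\left[\begin{matrix}a&b\\c&d\end{matrix}\right]$, pulling back the generators $(m,0)^t$ and $(0,k)^t$ of $\Lambda_{m,k}$ by $\gamma^{-1}$ yields generator columns in $\gamma$-coordinates with bottom row $\frac{1}{\Delta}(-mb,ka)$. A Hermite reduction (equivalently, applying Theorem \ref{theorem1-2} with $\gamma'=\gamma_0$) then gives, in the generic case $b\neq 0$,
\begin{equation*}
K_{m,k}=\gcd(mb,ka),\qquad M_{m,k}=\frac{mk}{K_{m,k}},
\end{equation*}
and some $L_{m,k}\in\{0,\dots,M_{m,k}-1\}$; when $b=0$, unimodularity forces $a,d\in\{\pm 1\}$ and one directly reads off $M_{m,k}=m$, $K_{m,k}=k$.

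Given $\varepsilon>0$, I would then pick $m$ with $\frac{1}{m}\log\rho(\mathbf{T}_m)>h_*(\mathcal{B})-\varepsilon/2$, let $\overline{\mathbf{T}}_m$ be a maximum irreducible submatrix of $\mathbf{T}_m$ with period $p_m$ (so $\rho(\overline{\mathbf{T}}_m)=\rho(\mathbf{T}_m)$ by Theorem \ref{theorem 4.2}(ii)), and set $k=m\,p_m\,N$ for a positive integer $N$. Because $\mathbf{T}_m$ has block-triangular normal form with $\overline{\mathbf{T}}_m$ among its diagonal blocks, $tr(\mathbf{T}_m^k)\geq tr(\overline{\mathbf{T}}_m^k)$. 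Since $p_m\mid k$ and $\overline{\mathbf{T}}_m$ is irreducible, Theorem \ref{thm 4.3} delivers a constant $c_m>0$ with $tr(\overline{\mathbf{T}}_m^k)\geq c_m\rho(\mathbf{T}_m)^k$ once $N$ is large enough. Choosing $N=N(m)$ so that $|\log c_m|/(mk)<\varepsilon/2$ yields
\begin{equation*}
\frac{1}{mk}\log tr(\mathbf{T}_m^k)\geq\frac{1}{m}\log\rho(\mathbf{T}_m)-\frac{\varepsilon}{2}>h_*(\mathcal{B})-\varepsilon.
\end{equation*}
At the same time, $m\mid k$ forces (for $b\neq 0$) $K_{m,k}=m\gcd(b,ap_mN)\geq m$ and $M_{m,k}\geq k/|b|=mp_mN/|b|$; the $b=0$ case is immediate. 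Hence $(M_{m,k},K_{m,k})\to\infty$ as $m,N\to\infty$, so by coordinate invariance of $\Gamma$,
\begin{equation*}
h_{\gamma,p}(\mathcal{B})\geq\frac{1}{M_{m,k}K_{m,k}}\log\Gamma([M_{m,k},L_{m,k};0,K_{m,k}]_\gamma)=\frac{1}{mk}\log tr(\mathbf{T}_m^k)>h_*(\mathcal{B})-\varepsilon,
\end{equation*}
and sending $\varepsilon\downarrow 0$ finishes the argument.

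The main obstacle is to synchronize two competing demands: $k$ must be a multiple of the (in general $m$-dependent) period $p_m$ and large enough to absorb the Perron prefactor $c_m$ so that $\frac{1}{mk}\log tr(\mathbf{T}_m^k)$ can approach $\frac{1}{m}\log\rho(\mathbf{T}_m)$, while simultaneously $K_{m,k}=\gcd(mb,ka)$ must tend to infinity in the $\gamma$-direction. Forcing $m\mid k$ via $k=mp_mN$ resolves the tension—then $K_{m,k}\geq m$ automatically—and the remaining freedom in $N$ is spent defeating the Perron constant without disturbing the entropy estimate.
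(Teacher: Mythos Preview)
Your proof is correct and takes essentially the same route as the paper: both pick $\gamma_0$-rectangular sublattices with count $tr(\mathbf{T}_m^k)$, pass to their $\gamma$-Hermite form, verify that $(M,K)\to\infty$, and use Perron--Frobenius asymptotics to recover $\frac{1}{m}\log\rho(\mathbf{T}_m)$. Your treatment is somewhat more careful about the period $p_m$ and possible reducibility of $\mathbf{T}_m$ (passing to a maximal irreducible block and taking $k\in p_m\mathbb{Z}$), and you compute the $\gamma$-Hermite form directly via $(\gamma^t)^{-1}$ rather than through Theorem~\ref{theorem1}, but these are differences of execution rather than of strategy.
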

\begin{proof}
	We only treat the case $b\neq 0$. The case $b=0$ can also be studied analogously. By Theorem \ref{theorem1},
\begin{equation}
\left[\begin{matrix}
M&L    \\
0&K
\end{matrix}\right]_{\gamma}\cong\left[\begin{matrix}
m&\ell\\
0&k
\end{matrix}\right]_{\gamma_0},
\end{equation}	
where $M,L,K,m,\ell,k$ satisfy the relations in (\ref{thm4}). Then,
\begin{align*}
	h_{\gamma,p}(\mathcal{B})&=\limsup_{(M,K)\rightarrow\infty}\sup_{L }\frac{1}{MK} \log \Gamma_{\mathcal{B}} \left(\left[ \begin{matrix}
		M&L\\
		0&K
	\end{matrix}\right]_{\gamma}\right) \\
	&\geq \limsup_{K\to \infty} \left( \limsup_{M\to \infty} \sup_{L}\frac{1}{MK}\log \Gamma_{\mathcal{B}} \left(\left[\begin{matrix}
		M&L\\
		0&K
	\end{matrix}\right]_{\gamma}\right)\right)\\
	&=\limsup_{\frac{bm}{m'}\to \infty}
	\left( \limsup_{\frac{m'k}{b}\to \infty} \sup_{\ell=\frac{ak}{b}-\Delta b_2\frac{m}{m'}}\frac{1}{mk}\log \Gamma_{\mathcal{B}} \left(\left[\begin{matrix}
		m&\ell    \\
		0&k
	\end{matrix}\right]_{\gamma_0}\right)\right)\\
	&\geq \limsup_{m\to \infty}
\left( \limsup_{k\to \infty} \sup_{\ell=\frac{ak}{b}-\Delta b_2\frac{m}{m'}}\frac{1}{mk}\log \Gamma_{\mathcal{B}} \left(\left[\begin{matrix}
	m&\ell    \\
	0&k
\end{matrix}\right]_{\gamma_0}\right)\right).
\end{align*}	
By choosing $m'=1$, $b_1=1$, $b_2=0$ and $k=bm$, we have
\begin{align*}
	h_{\gamma,p}(\mathcal{B})&\geq \limsup_{m\to \infty}
	\frac{1}{m}\left( \limsup_{bm\to \infty} \sup_{am}\frac{1}{bm}\log \Gamma_{\mathcal{B}} \left(\left[\begin{matrix}
		m&am \\
		0&bm
	\end{matrix}\right]_{\gamma_0}\right)\right)\\
&=\limsup_{m\to \infty}
\frac{1}{m} \limsup_{bm\to \infty} \sup_{am}\frac{1}{bm}\log tr \left({\bf T}_m^{bm}(\mathcal{B}){\bf R}_m^{am}\right)\\
&= \limsup_{m\to \infty}
\frac{1}{m} \limsup_{bm\to \infty} \sup_{am}\frac{1}{bm}\log tr \left({\bf T}_m^{bm}(\mathcal{B})\right)\\
&= \limsup_{m\to \infty}
\frac{1}{m} \limsup_{bm\to \infty}\frac{1}{bm}\log tr \left({\bf T}_m^{bm}(\mathcal{B})\right)\\
&=\limsup_{m\to \infty} \frac{1}{m}\log \rho \left({\bf T}_m(\mathcal{B})\right)\\
&=h_*
\end{align*}	
The proof is complete.
\end{proof}

Hence we have the following theorem.
\begin{theorem}
	If $h(\mathcal{B})=h_*(\mathcal{B})$, then $h_{\gamma,p}(\mathcal{B})=h(\mathcal{B})$ for all $\gamma\in GL_2(\mathbb{Z})$. In particular, if $\{ {\bf T}_m(\mathcal{B})  \}_{m=1}^{\infty}$ is uniformly dominated by $\{ \rho ({\bf T}_m) \}_{m=1}^{\infty}$ which satisfies (\ref{4.17}) and (\ref{4.17-1}), then $h_{\gamma,p}(\mathcal{B})=h(\mathcal{B})$ for all $\gamma\in GL_2(\mathbb{Z})$.
\end{theorem}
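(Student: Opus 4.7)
The proof will be a short combination of a lower bound already established (Lemma \ref{lemma 410}) and an upper bound from the skew-coordinate invariance of entropy. My plan is as follows.

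First, I would establish the upper bound $h_{\gamma,p}(\mathcal{B}) \leq h(\mathcal{B})$ for every $\gamma \in GL_2(\mathbb{Z})$. Since every $\left[\begin{smallmatrix}M&L\\0&K\end{smallmatrix}\right]_\gamma$-periodic admissible pattern is, in particular, an admissible pattern on the fundamental parallelogram $\mathbb{Z}_{\gamma;M\times K}$, we have $\Gamma_{\mathcal{B}}\bigl(\left[\begin{smallmatrix}M&L\\0&K\end{smallmatrix}\right]_\gamma\bigr) \le |\Sigma_{\gamma;M\times K}(\mathcal{B})|$, so taking $\limsup$ and applying the identity $h_{\gamma}(\mathcal{B}) = h(\mathcal{B})$ recalled in equation (1.4) of the introduction (the Hu--Lin result) yields $h_{\gamma,p}(\mathcal{B}) \leq h_{\gamma}(\mathcal{B}) = h(\mathcal{B})$.

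Second, I would invoke Lemma \ref{lemma 410}, which gives $h_{\gamma,p}(\mathcal{B}) \geq h_*(\mathcal{B})$ for every $\gamma \in GL_2(\mathbb{Z})$. Combining this with the upper bound yields
\begin{equation*}
h_*(\mathcal{B}) \leq h_{\gamma,p}(\mathcal{B}) \leq h(\mathcal{B}).
\end{equation*}
Under the hypothesis $h(\mathcal{B}) = h_*(\mathcal{B})$ the outer terms coincide and the equality $h_{\gamma,p}(\mathcal{B}) = h(\mathcal{B})$ is forced for every $\gamma \in GL_2(\mathbb{Z})$.

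For the ``in particular'' clause, I would simply apply Theorem \ref{thm 4.9}, which states that uniform domination of $\{{\bf T}_m(\mathcal{B})\}_{m=1}^\infty$ by $\{\rho({\bf T}_m(\mathcal{B}))\}_{m=1}^\infty$ (conditions (\ref{4.17}) and (\ref{4.17-1})) implies $h(\mathcal{B}) = h_*(\mathcal{B})$, and then feed this into the first part.

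The only nontrivial ingredient is Lemma \ref{lemma 410}, and its proof has already been carried out in the excerpt (choosing $m'=1$, $b_1=1$, $b_2=0$, $k=bm$ to reduce the supremum over sublattices to traces of ${\bf T}_m^{bm}$); once that lemma is in hand the present theorem is immediate, so I do not anticipate any real obstacle. The one place where care is needed is distinguishing the case $b \neq 0$ (where Theorem \ref{theorem1} is used inside Lemma \ref{lemma 410}) from $b = 0$, where $\gamma$ is essentially $\pm \gamma_0$ and the statement reduces to the standard inequality $h_*(\mathcal{B}) \leq h_p(\mathcal{B}) \leq h(\mathcal{B})$ from Lemma \ref{lemma 4.6}; I would dispose of this degenerate case in one sentence.
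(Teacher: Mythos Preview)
Your proposal is correct and follows exactly the paper's own argument: the paper's proof is the single line ``Since $h_{\gamma,p}(\mathcal{B})\leq h_{\gamma}(\mathcal{B})=h(\mathcal{B})$, the result follows from Lemma \ref{lemma 410},'' which is precisely your combination of the upper bound via $h_\gamma=h$ with the lower bound from Lemma \ref{lemma 410}. Your added remarks on the $b=0$ case and on invoking Theorem \ref{thm 4.9} for the ``in particular'' clause are accurate elaborations that the paper leaves implicit.
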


\begin{proof}
	Since $h_{\gamma,p}(\mathcal{B})\leq h_{\gamma}(\mathcal{B})=h(\mathcal{B})$, the result follows from Lemma \ref{lemma 410}.
\end{proof}

Now, we are going to study the problems for which $\mathcal{B}$ can satisfy $h_\ell(\mathcal{B})=h_p(\mathcal{B})$. Recall from Proposition \ref{Rotation},
\begin{equation}\label{5.11}
R_{m;\alpha, \sigma^\ell(\alpha)}^{\ell}=1,
\end{equation}
or
\begin{equation}\label{5.12}
R_{m;\sigma^{-\ell}(\alpha), \alpha}^\ell=1
\end{equation}
for all $m\geq 1$ and $1\leq \alpha \leq r^m $.

Hence 
\begin{equation*}
({\bf T}_m {\bf R}_m^\ell)_{i,j}=\sum_{s} t_{m; i,s} R_{m; s,j}^\ell= t_{m; i, \sigma^{-\ell}(j)},
\end{equation*}
or
\begin{equation}\label{5.13}
({\bf T}_m {\bf R}_m^{\ell})_{i,j}= ({\bf T}_m)_{i, \sigma^{-\ell}(j)},
\end{equation}
and
\begin{equation}\label{5.14}
({\bf T}_m {\bf R}_m^{\ell})_{i,i}= ({\bf T}_m)_{i, \sigma^{-\ell}(i)}.
\end{equation}
Therefore
\begin{equation}\label{5.15}
\Gamma \left[ \begin{matrix}
m&\ell\\
0&k
\end{matrix}\right]= \sum_{i=1}^{r^m} ({\bf T}_m^k)_{i, \sigma^{-\ell}(i)}=\sum_{i_1=1}^{r^m}  \sum_{i_2,...,i_k=1}^{r^m}  t_{m;i_1,i_2} \cdots t_{m; i_k,\sigma^{-\ell}(i_1)}.
\end{equation}

We have the following results. 
\begin{theorem}\label{thm 5.4}
	If ${\bf T}_m(\mathcal{B})$ is irreducible for all $m\geq 1$, then 
	\begin{equation}\label{5.155}
	h_\ell(\mathcal{B})\geq h_*(\mathcal{B}).
	\end{equation}
	Furthermore, if ${\bf T}_m(\mathcal{B})$ is not irreducible and $\overline{\bf T}_m(\mathcal{B}) \leq {\bf T}_m(\mathcal{B}) $ is a maximum irreducible submatrix with index set $\overline{\mathcal{I}}_m$ and there is an index pair $i$ and $\sigma^{-\ell}(i)\in \overline{\mathcal{I}}_m$. Then (\ref{5.155}) holds.
\end{theorem}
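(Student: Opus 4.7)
The plan is to apply Theorem \ref{thm 4.3} to individual diagonal entries of ${\bf T}_m^k {\bf R}_m^\ell$ and then pass from an iterated $\limsup$ to the double $\limsup$ via Lemma \ref{lemma 4.4}. Starting from (\ref{5.15}), for any fixed $m \geq 1$ and any choice of index $i$,
\begin{equation*}
tr\left({\bf T}_m^k {\bf R}_m^\ell\right) \;=\; \sum_{j=1}^{r^m} \left({\bf T}_m^k\right)_{j,\sigma^{-\ell}(j)} \;\geq\; \left({\bf T}_m^k\right)_{i,\sigma^{-\ell}(i)}.
\end{equation*}
Assuming ${\bf T}_m(\mathcal{B})$ is irreducible with maximum eigenvalue $\rho_m = \rho({\bf T}_m(\mathcal{B}))$ and cycle $p = p(m) \geq 1$, Theorem \ref{thm 4.3} applied to the pair $(i, \sigma^{-\ell}(i))$ yields $k_0 = k_0(m,i,\ell)$ and a constant $c_1 = c_1(m) > 0$ such that
\begin{equation*}
\left({\bf T}_m^{\alpha p + k_0}\right)_{i,\sigma^{-\ell}(i)} \;\geq\; c_1 \rho_m^{\alpha p}
\end{equation*}
for all sufficiently large $\alpha$.

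Taking logarithms, dividing by $m(\alpha p + k_0)$, and letting $\alpha \to \infty$ (which forces $k = \alpha p + k_0 \to \infty$ for fixed $m$), the term $\frac{\log c_1}{m(\alpha p + k_0)}$ vanishes and $\frac{\alpha p}{\alpha p + k_0} \to 1$, so
\begin{equation*}
\limsup_{k \to \infty} \frac{1}{mk} \log tr\left({\bf T}_m^k {\bf R}_m^\ell\right) \;\geq\; \frac{1}{m} \log \rho_m.
\end{equation*}
Taking $\limsup$ over $m \to \infty$ on both sides and invoking Lemma \ref{lemma 4.4} (iterated $\limsup$ is bounded above by the double $\limsup$), together with Proposition \ref{gamma n l o k },
\begin{equation*}
h_\ell(\mathcal{B}) \;=\; \limsup_{(m,k)\to\infty} \frac{1}{mk} \log tr\left({\bf T}_m^k {\bf R}_m^\ell\right) \;\geq\; \limsup_{m \to \infty} \frac{1}{m} \log \rho({\bf T}_m(\mathcal{B})) \;=\; h_*(\mathcal{B}),
\end{equation*}
which proves (\ref{5.155}) in the irreducible case.

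For the second assertion, when ${\bf T}_m(\mathcal{B})$ is reducible I would replace ${\bf T}_m$ by the maximum irreducible submatrix $\overline{\bf T}_m$ supported on $\overline{\mathcal{I}}_m$. By (\ref{4.32}) we have $\rho(\overline{\bf T}_m) = \rho({\bf T}_m)$, and the hypothesis that both $i$ and $\sigma^{-\ell}(i)$ lie in $\overline{\mathcal{I}}_m$ makes the argument of the previous paragraph applicable to the pair $(i, \sigma^{-\ell}(i))$ inside the irreducible block $\overline{\bf T}_m$. Since $\left(\overline{\bf T}_m^k\right)_{i,\sigma^{-\ell}(i)} \leq \left({\bf T}_m^k\right)_{i,\sigma^{-\ell}(i)}$, the lower bound propagates to $tr({\bf T}_m^k {\bf R}_m^\ell)$ and the same limsup manipulation yields (\ref{5.155}).

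The main subtlety is that $k_0$ in Theorem \ref{thm 4.3} depends on $m$ (through the choice of index $i$ and the cycle $p(m)$), so one cannot hope for a uniform rate across $m$. This is precisely why the argument is arranged as an iterated limit in $k$ first and then in $m$; Lemma \ref{lemma 4.4} supplies exactly the one-sided inequality needed to push the iterated bound into the double $\limsup$ that defines $h_\ell(\mathcal{B})$. The role of the rotational matrix ${\bf R}_m^\ell$ is benign here since (\ref{5.14}) simply relabels the diagonal entry from $({\bf T}_m^k)_{i,i}$ to $({\bf T}_m^k)_{i,\sigma^{-\ell}(i)}$, and the irreducibility hypothesis guarantees positivity of a suitable power regardless of which off-diagonal entry the shift $\sigma^{-\ell}$ selects.
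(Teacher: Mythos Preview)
Your proof is correct and follows essentially the same approach as the paper's own proof: both apply Theorem \ref{thm 4.3} to the entry $({\bf T}_m^k)_{i,\sigma^{-\ell}(i)}$ to obtain a lower bound of order $\rho({\bf T}_m)^{\alpha p}$ along a subsequence, take the iterated $\limsup$ in $k$ then $m$, and invoke Lemma \ref{lemma 4.4} to pass to the double $\limsup$ defining $h_\ell(\mathcal{B})$; the reducible case is handled identically via the maximum irreducible block $\overline{\bf T}_m$ together with (\ref{4.32}). If anything, your write-up is slightly more explicit about the role of Lemma \ref{lemma 4.4} and the inequality $(\overline{\bf T}_m^k)_{i,\sigma^{-\ell}(i)} \leq ({\bf T}_m^k)_{i,\sigma^{-\ell}(i)}$, which the paper leaves implicit.
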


\begin{proof}
	By Theorem \ref{thm 4.3}, if ${\bf T}_m$ is irreducible then for any $m\geq 1$ and any pair $i$ and $j$ there is $\beta_m=\beta_m(i,j)\geq 0$ and cycle $p_m \geq 1$ and $\alpha_m \geq 1$ such that
	\begin{equation}\label{5.16}
	c_m \rho({\bf T}_m)^{\alpha p_m} \leq ( {\bf T}_m^{\alpha p_m+\beta_m} )_{i,j}\leq d_m \rho({\bf T}_m)^{\alpha p_m} 
	\end{equation}
	for $\alpha \geq \alpha_m \geq 1$, where $0<c_m \leq d_m$.
	
	Hence (\ref{5.16}) implies 
	\begin{equation*}
	\begin{aligned}
	\limsup_{k\rightarrow \infty} \frac{1}{k}\log tr({\bf T}_m^k{\bf R}_m^\ell)&\geq 	\limsup_{\alpha\rightarrow \infty} \frac{1}{\alpha p_m + \beta_m}\log \sum_{i=1}^{r^m} ({\bf T}_m^{\alpha p_m +\beta_m})_{i,\sigma^{-\ell}(i)}  \\ &\geq 	\limsup_{\alpha\rightarrow \infty} \log \rho({\bf T}_m).
	\end{aligned}
	\end{equation*}
	Therefore, 
	\begin{equation*}
	\begin{aligned}
	h_\ell(\mathcal{B})&=\limsup_{(m,k)\rightarrow\infty} \frac{1}{mk}\log tr({\bf T}_m^k{\bf R}_m^\ell)\\
	&\geq \limsup_{m \rightarrow \infty} \frac{1}{m} \left( \limsup_{k\rightarrow \infty}   \frac{1}{k}\log tr({\bf T}_m^k{\bf R}_m^\ell)  \right)\\
	&\geq \limsup_{m \rightarrow \infty} \frac{1}{m} \left( \limsup_{k\rightarrow \infty}   \frac{1}{k}\log \sum_{i=1}^{r^m}({\bf T}_m^k)_{i,\sigma^{-\ell}(i)}  \right)\\
	&\geq  \limsup_{m \rightarrow \infty} \frac{1}{m}\log \rho({\bf T}_m). 
	\end{aligned}	
	\end{equation*}
	Next, if ${\bf T}_m$ is not irreducible, but $\overline{\bf T}_m \leq {\bf T}_m$ is a maximum irreducible submatrix with cycle $p_m \geq 1$, $i$ and $\sigma^{-\ell}(i)\in \overline{\mathcal{I}}_m$, the index set of $\overline{\bf T}_m$. Then, as (\ref{5.16}), the following inequality holds.
	\begin{equation}\label{5.17}
	c_m \rho(\overline{\bf T}_m)^{\alpha p_m} \leq ( {\bf T}_m^{\alpha p_m+\beta_m} )_{i,\sigma^{-\ell}(i)}\leq d_m \rho(\overline{\bf T}_m)^{\alpha p_m} 
	\end{equation}
	for all $\alpha\geq \alpha_m \geq 1$ and some $\beta_m\geq 0$. 
	
	Hence (\ref{5.15}) holds. The proof is completed.
\end{proof}

Combing with Theorems \ref{thm 5.4}, \ref{thm 4.9} and \ref{thm 4.14} we have the following results.

\begin{theorem}\label{thm 5.5}
	If $\{ {\bf T}_m (\mathcal{B})  \}_{m=1}^{\infty}$ is irreducible and uniformly dominated by $\{ \rho \left( {\bf T}_m (\mathcal{B}) \right)  \}_{m=1}^{\infty}$, then
	\begin{equation}\label{5.*}
	h_\ell(\mathcal{B})=h_p(\mathcal{B})=h(\mathcal{B})=h_*(\mathcal{B})
	\end{equation}
	for all integer $\ell$. Furthermore, if $\{ {\bf T}_m (\mathcal{B})  \}_{m=1}^{\infty}$ is reducible and $\{ \overline{\bf T}_m (\mathcal{B})  \}_{m=1}^{\infty}$ is a sequence of maximum irreducible submatrices with an indices pair $i$ and $\sigma^{-\ell}(i)\in \overline{\mathcal{I}}_m$, and $\{ \overline{\bf T}_m (\mathcal{B})  \}_{m=1}^{\infty}$ is uniformly dominated by $\{ \rho \left( {\bf T}_m (\mathcal{B}) \right)  \}_{m=1}^{\infty}$. Then (\ref{5.*}) holds.
\end{theorem}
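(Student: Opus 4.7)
The plan is to deduce Theorem~\ref{thm 5.5} by simply chaining the inequalities established by the three preceding results: Theorem~\ref{thm 4.9} (uniform dominance forces $h=h_p=h_\ast$), Theorem~\ref{thm 5.4} (irreducibility of ${\bf T}_m$ forces $h_\ell\ge h_\ast$), and the trivial comparison $h_\ell\le h_p$ coming directly from the definitions of these two quantities as a $\limsup$ over a fixed $\ell$ versus a sup over all $\ell$.

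More precisely, first I would invoke Theorem~\ref{thm 4.9}: the hypotheses (\ref{4.17}) and (\ref{4.17-1}) hold, so
\[
h(\mathcal{B})=h_p(\mathcal{B})=h_\ast(\mathcal{B}).
\]
Next, from the definitions (\ref{3}) and the $\ell$-shift periodic entropy $h_\ell(\mathcal{B})$, since the sup over $0\le \ell\le n-1$ dominates any single value of $\ell$, one obtains
\[
h_\ell(\mathcal{B})\le h_p(\mathcal{B})
\]
for every integer $\ell$. Finally, Theorem~\ref{thm 5.4} together with the irreducibility of $\{{\bf T}_m(\mathcal{B})\}_{m=1}^\infty$ provides the reverse bound $h_\ell(\mathcal{B})\ge h_\ast(\mathcal{B})$. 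Sandwiching yields $h_\ast\le h_\ell\le h_p=h_\ast$, and equality propagates to all four entropies, establishing (\ref{5.*}).

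For the second half, where ${\bf T}_m(\mathcal{B})$ is reducible, the same skeleton applies with $\overline{\bf T}_m(\mathcal{B})$ in place of ${\bf T}_m(\mathcal{B})$. The assumption that $\{\overline{\bf T}_m(\mathcal{B})\}_{m=1}^\infty$ is uniformly dominated by $\{\rho({\bf T}_m(\mathcal{B}))\}_{m=1}^\infty$, combined with the identity $\rho(\overline{\bf T}_m)=\rho({\bf T}_m)$ from (\ref{4.32}) and the fact that $|\,\overline{\bf T}_m^k|\le |{\bf T}_m^k|$ with the converse obtained from the uniform domination hypothesis, lets us rerun the argument of Theorem~\ref{thm 4.9} to obtain $h=h_p=h_\ast$. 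The lower bound $h_\ell\ge h_\ast$ is then supplied by the ``furthermore'' clause of Theorem~\ref{thm 5.4}, which is precisely why the hypothesis that $i$ and $\sigma^{-\ell}(i)$ lie in $\overline{\mathcal{I}}_m$ is imposed here: it guarantees that the diagonal entry $({\bf T}_m^k{\bf R}_m^\ell)_{i,i}=({\bf T}_m^k)_{i,\sigma^{-\ell}(i)}$ appearing in (\ref{5.15}) is picked up by the maximum irreducible block, so the asymptotic estimate (\ref{5.17}) is available. Combining the two bounds as before finishes the proof.

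The argument is essentially a bookkeeping assembly of earlier lemmas, so there is no serious obstacle; the only subtlety worth emphasizing is why the index condition ``$i,\sigma^{-\ell}(i)\in\overline{\mathcal{I}}_m$'' is exactly what is needed to transfer the irreducible-case lower bound to the reducible case via Theorem~\ref{thm 5.4}, and I would make that explicit in the written proof.
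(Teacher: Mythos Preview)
Your proposal is correct and follows essentially the same approach as the paper's own proof: combine the trivial inequality $h_\ell\le h_p$, the identity $h_p=h=h_\ast$ from Theorem~\ref{thm 4.9}, and the lower bound $h_\ell\ge h_\ast$ from Theorem~\ref{thm 5.4} (and its ``furthermore'' clause in the reducible case). Your write-up is in fact more explicit than the paper's, which disposes of the argument in three sentences.
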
 
\begin{proof}
	Since
	\begin{equation*}
	h_\ell(\mathcal{B})\leq h_p(\mathcal{B}).
	\end{equation*} 
	Theorems \ref{4.9} and \ref{4.16} imply
	\begin{equation*}
	h_p(\mathcal{B})=h(\mathcal{B})=h_*(\mathcal{B}).
	\end{equation*}
	Now, Theorem \ref{thm 5.4} implies (\ref{5.*}). The proof is complete.
\end{proof}

By combining Theorems \ref{thm 5.5} and \ref{theorem 3.22}, we have the following result.
\begin{theorem}\label{thm4.14}
	If $\left\{  {\bf T}_m \left( \mathcal{B}\right) \right\}_{m=1}^{\infty}$ is irreducible or weakly irreducible and $\Sigma\left( \mathcal{B}\right)$ is horizontal-periodic block gluing, then
\begin{equation*}
h_{\ell}\left(\mathcal{B}\right)=	h_p\left(\mathcal{B}\right)=h\left(\mathcal{B}\right)=	h_*\left(\mathcal{B}\right),
\end{equation*} 
for all $\ell$.
\end{theorem}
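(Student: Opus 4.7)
The plan is to chain together three pieces of machinery from Sections~\ref{section3} and~\ref{section4}: first, use the horizontal-periodic block gluing hypothesis to extract uniform connectedness of the maximum irreducible submatrices $\{\overline{\mathbf{T}}_m(\mathcal{B})\}_{m=1}^\infty$ via an adaptation of Theorem~\ref{theorem 3.22}; second, convert this into uniform domination of $\{\mathbf{T}_m(\mathcal{B})\}$ by its spectral radii via Theorem~\ref{thm 4.17}; and third, invoke Theorem~\ref{thm 5.5} to obtain the four-way entropy equality for every integer $\ell$. The two cases (irreducible, weakly irreducible) can be handled simultaneously by working uniformly with $\overline{\mathbf{T}}_m$, which reduces to $\mathbf{T}_m$ itself in the irreducible case.

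For the first step, each index $i\in\overline{\mathcal{I}}_m$ corresponds to a horizontal $m$-periodic row that extends to some admissible global pattern. Given two such indices $i,j\in\overline{\mathcal{I}}_m$, the horizontal-periodic block gluing property of $\Sigma(\mathcal{B})$ supplies a $\mathcal{B}$-admissible vertical bridge of height at most $K$ between the corresponding periodic strips, which translates exactly to $(\overline{\mathbf{T}}_m^{\,k})_{i,j}\geq 1$ for some $1\leq k\leq K$, independent of $m$. Hence the associated graphs $G(\overline{\mathbf{T}}_m)$ have diameters uniformly bounded by $K$, which is the definition of uniform connectedness of $\{\overline{\mathbf{T}}_m(\mathcal{B})\}$. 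Because zero rows and columns of $\mathbf{T}_m$ contribute nothing to its powers, $|\mathbf{T}_m^k(\mathcal{B})|=|\overline{\mathbf{T}}_m^{\,k}(\mathcal{B})|$, so $\{\mathbf{T}_m(\mathcal{B})\}$ is trivially uniformly dominated by $\{\overline{\mathbf{T}}_m(\mathcal{B})\}$ with $c_0(m,k)\equiv 1$. Applying Theorem~\ref{thm 4.17} then yields uniform domination of $\{\mathbf{T}_m(\mathcal{B})\}$ by $\{\rho(\mathbf{T}_m(\mathcal{B}))\}$.

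The final step applies Theorem~\ref{thm 5.5} to finish. The main obstacle I foresee is verifying the index-pair hypothesis of its reducible clause, namely that for each $\ell$ and each large $m$ there exists $i\in\overline{\mathcal{I}}_m$ with $\sigma^{-\ell}(i)\in\overline{\mathcal{I}}_m$. I expect this to reduce to the $\sigma$-invariance of $\overline{\mathcal{I}}_m$, which follows from the $\mathbf{R}_m$-symmetry of $\mathbf{T}_m(\mathcal{B})$ recalled in Section~\ref{section2}: the permutation $\sigma$ preserves the zero-pattern of $\mathbf{T}_m$ and therefore preserves the maximum irreducible block, giving $\sigma(\overline{\mathcal{I}}_m)=\overline{\mathcal{I}}_m$. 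Once this invariance is in hand, any $i\in\overline{\mathcal{I}}_m$ (which is nonempty, since the gluing hypothesis together with weak irreducibility produces admissible horizontal-periodic patterns) supplies the required pair for every $\ell$, and Theorem~\ref{thm 5.5} produces $h_\ell(\mathcal{B})=h_p(\mathcal{B})=h(\mathcal{B})=h_*(\mathcal{B})$ for all $\ell$, completing the proof.
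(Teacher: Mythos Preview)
Your approach is essentially the paper's own: the paper states only that the result follows ``by combining Theorems~\ref{thm 5.5} and~\ref{theorem 3.22}'', and you have reconstructed exactly that chain (block gluing $\Rightarrow$ uniform connectedness $\Rightarrow$ uniform domination $\Rightarrow$ Theorem~\ref{thm 5.5}), while additionally filling in two details the paper leaves implicit---the weakly irreducible case and the index-pair hypothesis of Theorem~\ref{thm 5.5} via the $\mathbf{R}_m$-symmetry of $\mathbf{T}_m$. Your observation that $\sigma(\overline{\mathcal{I}}_m)=\overline{\mathcal{I}}_m$ because $\sigma$ preserves the zero-pattern of $\mathbf{T}_m$ is a clean way to dispatch that hypothesis and is more explicit than anything in the paper.

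One small inaccuracy: the equality $|\mathbf{T}_m^k(\mathcal{B})|=|\overline{\mathbf{T}}_m^{\,k}(\mathcal{B})|$ need not hold on the nose in the weakly irreducible case. An index $i$ whose column is zero but whose row is not (so $i\notin\overline{\mathcal{I}}_m$) can still start a path of length $k$ whose intermediates lie in $\overline{\mathcal{I}}_m$, and such paths contribute to $|\mathbf{T}_m^k|$ but not to $|\overline{\mathbf{T}}_m^{\,k}|$. However, these extra contributions are bounded by a factor of the form $r^{2m}$ (one boundary step at each end), so uniform domination in the sense of \eqref{4.33}--\eqref{4.33-1} still holds with $c_0(m,k)\le r^{2m}$, and your invocation of Theorem~\ref{thm 4.17} goes through unchanged. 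With that correction the argument is complete.
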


\begin{example}
The GM, SGM and Hard-Hexagon model in Example \ref{example 3.20} satisfy the assumption of Theorem \ref{thm 4.14} and then (\ref{5.*}) holds.
\end{example}

The following theorem derives the transformation between $\left[ \begin{matrix}
M&L\\
0&K
\end{matrix}\right]_{\gamma_q}, q\geq 1$, $\left[ \begin{matrix}
m&\ell\\
0&k
\end{matrix}\right]_{\gamma_0}$, $\left[ \begin{matrix}
\hat{M}&\hat{L}\\
0&\hat{K}
\end{matrix}\right]_{\hat{\gamma}_q}$ and $\left[ \begin{matrix}
\hat{m}&\hat{\ell}\\
0&\hat{k}
\end{matrix}\right]_{\hat{\gamma_0}}$.

\begin{theorem}\label{thm 3.8}
	For any $q\geq 1$, 
	\begin{equation}\label{5.1}
	\left[ \begin{matrix}
	\frac{m' k}{q}&\frac{\ell' k-K}{q}\\
	0&K
	\end{matrix}\right]_{{\gamma}_q} \cong 
	\left[ \begin{matrix}
	\frac{m' K}{q}&\frac{ k-b_2 K}{q}\\
	0&k
	\end{matrix}\right]_{{\gamma}_0}
	\end{equation}
	where $b_1 m'+b_2 \ell'=1$. When $m'k=q$, then 
	\begin{equation}\label{5.2}
	\left[ \begin{matrix}
	1&0\\
	0&\alpha q+ \frac{\ell'}{m'} q
	\end{matrix}\right]_{{\gamma}_q} \cong 
	\left[ \begin{matrix}
	\alpha m' +\ell'&b_1-b_2 \alpha\\
	0& \frac{q}{m'}
	\end{matrix}\right]_{{\gamma}_0}
	\end{equation}	
	for any $\alpha \geq 1$. In particular, when $m'=1$, then
	\begin{equation}\label{5.3}
	\left[ \begin{matrix}
	1&0\\
	0&m q
	\end{matrix}\right]_{{\gamma}_q} \cong 
	\left[ \begin{matrix}
	m&1\\
	0&q
	\end{matrix}\right]_{{\gamma}_0}
	\end{equation}	
	for $m\geq 1$. Similarly, the above results for $\left[ \begin{matrix}
	\hat{M}&\hat{L}\\
	0&\hat{K}
	\end{matrix}\right]_{\hat{\gamma}_q}$ and $\left[ \begin{matrix}
	\hat{m}&\hat{\ell}\\
	0&\hat{k}
	\end{matrix}\right]_{\hat{\gamma_0}}$ can also be obtained.
\end{theorem}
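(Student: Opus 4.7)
The plan is to obtain all three displayed equivalences as successive specializations of Theorem \ref{theorem1}, which already encodes the general transformation law between a Hermite normal form in a $\gamma$-coordinate system and its $\gamma_0$-counterpart. First I would apply that theorem to $\gamma = \gamma_q = \left[\begin{matrix} 1 & q \\ 0 & 1 \end{matrix}\right]$, for which $a = 1$, $b = q$, $c = 0$, $d = 1$, and $\Delta = 1$. Substituting these values into the defining relations of Theorem \ref{theorem1} gives $k = \gcd(qM,\, qL + K)$, $qM = m'k$, $qL + K = \ell' k$, and $b_1 m' + b_2 \ell' = 1$; plugging into the conclusion of that theorem yields \eqref{5.1} directly, with no additional computation required.

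Next, to derive \eqref{5.2}, I would specialize \eqref{5.1} by imposing $m' k = q$, which both forces $M = 1$ on the left-hand side and makes $q/m'$ an integer on the right. I would then choose $K = (\alpha m' + \ell')\, q/m'$, which is an integer under this hypothesis, and compute that the $(1,2)$-entry $(\ell' k - K)/q$ on the left reduces to $-\alpha$. Since adding $\alpha$ times the first column to the second is a unitary column operation and hence preserves the $\cong$-relation, the left-hand side of \eqref{5.2} is obtained. On the right-hand side, $m' K / q = \alpha m' + \ell'$, and a short simplification using $b_1 m' + b_2 \ell' = 1$ shows $(k - b_2 K)/q = b_1 - b_2 \alpha$, giving \eqref{5.2}. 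Equation \eqref{5.3} is then the further specialization $m' = 1$, $\ell' = m$, where I would take the Bezout pair $b_1 = 1$, $b_2 = 0$ (any other choice with $b_1 \equiv 1 \pmod{m}$ gives the same Hermite normal form), reducing the right-hand side to $\left[\begin{matrix} m & 1 \\ 0 & q \end{matrix}\right]_{\gamma_0}$.

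For the analogues involving $\hat{\gamma}_q$ and $\hat{\gamma}_0$, I would run the same argument using Theorem \ref{theorem1-2} with the pair $(\gamma,\gamma') = (\hat{\gamma}_q, \hat{\gamma}_0)$; the structure is identical, only the role of rows and columns is swapped. I do not anticipate a substantive obstacle, since the whole proof is essentially a tracking exercise. The one place where care is required is verifying integrality of all displayed entries at the specialization step, which reduces to the divisibility condition $m' \mid q$ already built into the hypothesis $m' k = q$; Theorem \ref{theorem1-1} supplies the remainder-based bookkeeping needed to handle this cleanly, should the intermediate expressions be pushed further.
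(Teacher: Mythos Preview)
Your proposal is correct and follows essentially the same approach as the paper: the paper's proof consists of the single sentence ``The proof is easily from Theorem \ref{theorem1-2},'' and since Theorem \ref{theorem1} is just the $\gamma'=\gamma_0$ specialization of Theorem \ref{theorem1-2}, your route via Theorem \ref{theorem1} is the same argument spelled out in more detail. One minor point: in your derivation of \eqref{5.3} via \eqref{5.2} with $m'=1$ and $\ell'=m$, the constraint $\alpha\ge 1$ in \eqref{5.2} forces $\alpha+\ell'>m$; it is cleaner to obtain \eqref{5.3} directly from \eqref{5.1} with $m'=1$, $k=q$, $\ell'=m$, $K=mq$, $b_1=1$, $b_2=0$, exactly as your parameter choices already suggest.
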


\begin{proof}
	The proof is easily from Theorem \ref{theorem1-2}.
\end{proof}

In the following, we will use a sequence of skew-coordinates systems $\gamma_q=\left[ \begin{matrix}1&q\\
0&1
\end{matrix}\right], q\geq 1$, to compute the periodic entropy. By (\ref{5.3}), we have 
\begin{equation}\label{5.3.1}
\left[ \begin{matrix}
1&0\\
0&m q
\end{matrix}\right]_{{\gamma}_q} \cong 
\left[ \begin{matrix}
m&1\\
0&q
\end{matrix}\right]_{{\gamma}_0},
\end{equation}
for $m\geq 1$. The relation of (\ref{5.3.1}) enable us to compute the $1$-shift entropy $h_1(\mathcal{B})$ by computing the $\rho({\bf T}_{\gamma_q,1})$. We draw the pictures to illustrate ${\bf T}_{\gamma_q,1}$ for $q=1,2$ and $3$. The others can also be obtained inductively.
	\begin{align}
		{\bf T}_{\gamma_1,1}&=\begin{matrix}
			\includegraphics[scale=0.3]{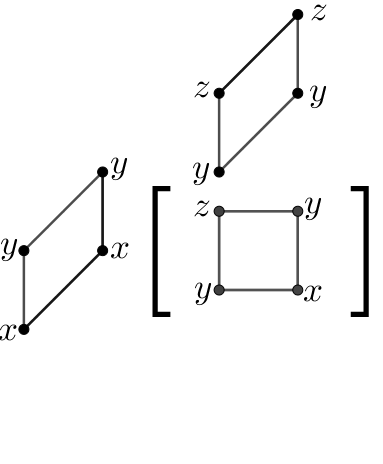}
		\end{matrix}\nonumber\\	
		&=\left[
		\begin{matrix}
			\includegraphics[scale=0.3]{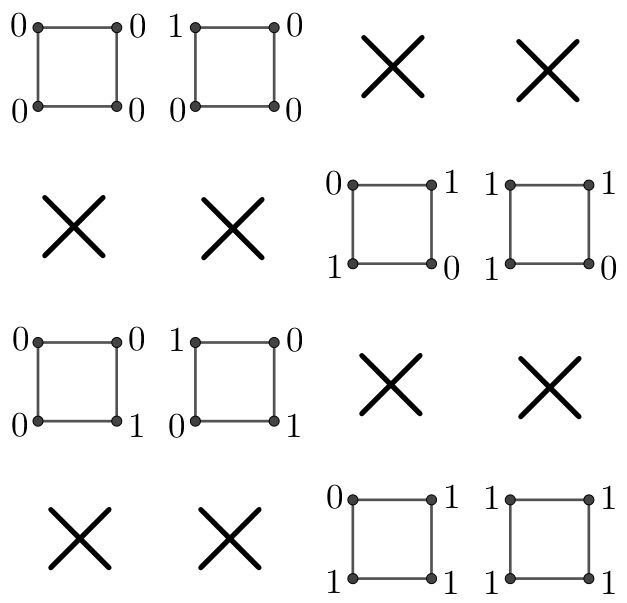}	
		\end{matrix}\right]\\
	    &=\left[ \begin{matrix}
	    x_{11}&x_{21}&\times &\times\\
	    \times & \times & x_{32}& x_{42}\\
	    x_{13}&x_{23}&\times &\times\\
	    \times & \times & x_{43}& x_{44}
	    \end{matrix} 
	    \right]_{2^2\times 2^2}.
	\end{align}
And
	\begin{align}
{\bf T}_{\gamma_2,1}&=\begin{matrix}
	\includegraphics[scale=0.3]{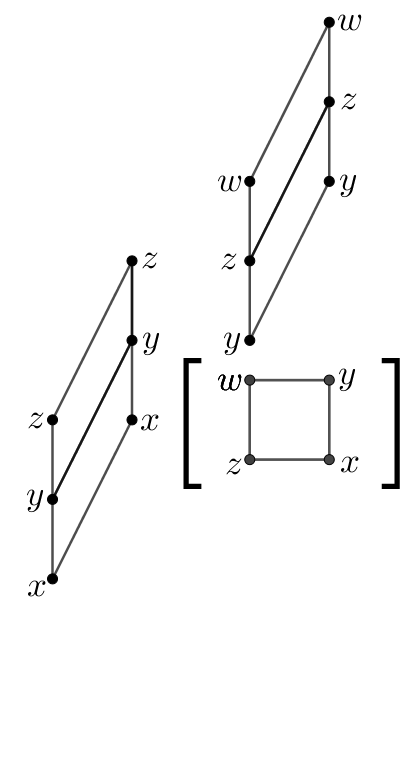}
\end{matrix}\nonumber\\	
&=\left[
\begin{matrix}
	\includegraphics[scale=0.35]{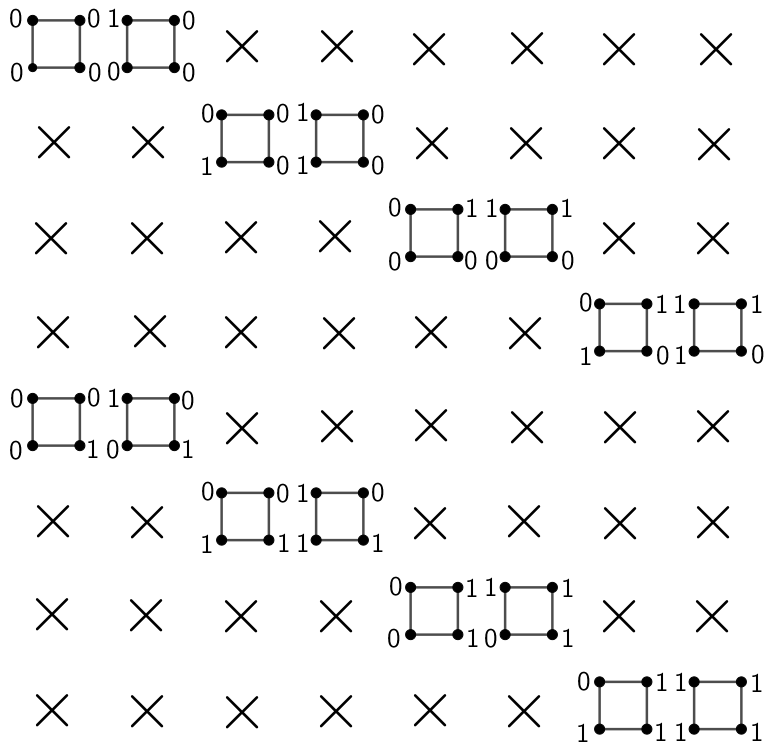}	
\end{matrix}\right]\\
&=\left[ \begin{matrix}
x_{11}&x_{21}&\times &\times &\times &\times &\times &\times \\
\times & \times & x_{31} & x_{41} &\times &\times &\times &\times\\
\times &\times &\times &\times& x_{12}&x_{22}&\times &\times\\
\times &\times &\times &\times&\times &\times& x_{32}& x_{42}\\
x_{13} & x_{23} & \times & \times & \times & \times & \times & \times \\ 
\times&\times &x_{33}&x_{43}&\times&\times&\times&\times \\
\times&\times&\times&\times &x_{14}&x_{24}&\times&\times \\
\times&\times &\times&\times&\times&\times&x_{34}&x_{44} 
\end{matrix} 
\right]_{2^3\times 2^3}\label{2.48}.
\end{align}
For $q\geq 3$,
\begin{equation}\label{2.50}
		{\bf T}_{\gamma_q,1}=\left[ \begin{matrix}
	 I_{2^{q-2}}	\otimes\left[ \begin{matrix}
			x_{11}&x_{21}&\times&\times\\
            \times&\times&x_{31} & x_{41}
		\end{matrix} 
		\right]& \\
		&I_{2^{q-2}}	\otimes\left[ \begin{matrix}
			x_{12}&x_{22}&\times&\times\\
			\times&\times&x_{32}& x_{42}
		\end{matrix} 
		\right] \\
		I_{2^{q-2}}	\otimes\left[ \begin{matrix}
			x_{13} & x_{23}&\times&\times\\
			\times&\times&x_{33}&x_{43}
		\end{matrix} 
		\right]& \\
		&I_{2^{q-2}}	\otimes\left[ \begin{matrix}
			x_{14}&x_{24}&\times&\times\\
			\times&\times&x_{34}&x_{44}
		\end{matrix} \right] 
		\end{matrix} \right]_{2^{q+1}\times 2^{q+1}}.
\end{equation}
Note that in (\ref{2.48}) to (\ref{2.50}), the $j$th column vector $(x_{1j},x_{2j},x_{3j},x_{4j})^t$ in ${\bf X}_{2\times2}$ has been arranged as 
\begin{equation*}
A_j=\left[ \begin{matrix}
x_{1j}&x_{2j}&\times&\times\\
\times&\times&x_{3j}&x_{4j}
  \end{matrix} \right].
\end{equation*}

For $r\geq 3$ and $\gamma\in GL_2(\mathbb{Z})$, ${\bf T}_{\gamma,n}$ can also be introduced. For example, for $r=3$,
\begin{equation*}
{\bf X}_{2\times 2}= \left[  x_{i,j} \right]_{3^2 \times 3^2}, 
\end{equation*}
\begin{equation}
{\bf T}_{\gamma_1,1}=\left[ \begin{matrix}
x_{11}&x_{21}&x_{31}&\times&\times&\times&\times&\times&\times\\
\times&\times&\times&x_{42}&x_{52}&x_{62}&\times&\times&\times\\
\times&\times&\times&\times&\times&\times&x_{73}&x_{83}&x_{93}\\
x_{14}&x_{24}&x_{34}&\times&\times&\times&\times&\times&\times\\
\times&\times&\times&x_{45}&x_{55}&x_{65}&\times&\times&\times\\
\times&\times&\times&\times&\times&\times&x_{76}&x_{86}&x_{96}\\
x_{17}&x_{27}&x_{37}&\times&\times&\times&\times&\times&\times\\
\times&\times&\times&x_{48}&x_{58}&x_{68}&\times&\times&\times\\
\times&\times&\times&\times&\times&\times&x_{79}&x_{89}&x_{99}
\end{matrix}    \right]_{3^2\times 3^2},
\end{equation}
In fact, for $q\geq 2$
\begin{equation}
		{\bf T}_{\gamma_q,1}=\left[ \begin{matrix}
			I_{3^{q-2}}	\otimes A_1& &\\
			&I_{3^{q-2}}	\otimes A_2& \\
			&&I_{3^{q-2}}	\otimes A_3 \\
			I_{3^{q-2}}	\otimes A_4& &\\
			&I_{3^{q-2}}	\otimes A_5& \\
			&&I_{3^{q-2}}	\otimes A_6\\
			I_{3^{q-2}}	\otimes A_7& &\\
			&I_{3^{q-2}}	\otimes A_8 & \\
			&&I_{3^{q-2}}	\otimes A_9
		\end{matrix} \right]_{3^{q+1}\times 3^{q+1}},
\end{equation}
where
\begin{equation*}
A_i=\left[ \begin{matrix}
	x_{1i}&x_{2i}&x_{3i}&\times&\times&\times&\times&\times&\times\\
	\times&\times&\times&x_{4i}&x_{5i}&x_{6i}&\times&\times&\times\\
	\times&\times&\times&\times&\times&\times&x_{7i}&x_{8i}&x_{9i}
\end{matrix} 
\right]
\end{equation*}
is a rearrangement of $i$th column vector in ${\bf X}_{2\times 2}$.

Since 
\begin{equation*}\label{5.4}
h_1(\mathcal{B})= \limsup_{(m,q)\rightarrow\infty}  \frac{1}{mq}\log \Gamma \left( \left[ \begin{matrix}
m&1\\
0&q
\end{matrix}\right]_{{\gamma}_0} \right)
\end{equation*}
and 
\begin{equation*}
\Gamma \left( \left[ \begin{matrix}
	1&0\\
	0&mq
\end{matrix}\right]_{{\gamma}_q} \right)=tr\left( {\bf T}^{mq}_{\gamma_q,1} \right),
\end{equation*}
we have
\begin{equation}\label{5.4-1}
h_1(\mathcal{B})= \limsup_{(m,q)\rightarrow\infty}  \frac{1}{mq}\log tr\left( {\bf T}^{mq}_{\gamma_q,1} \right).
\end{equation}

We need the following lemma to study (\ref{5.4-1}).

\begin{lemma}
	Let $A=[a_{i,j}]_{N\times N}$ be a non-negative matrix with eigenvalues $\{ \lambda_j \}_{j=1}^N$ and the maximum eigenvalue $\rho$. Then 
	\begin{equation}\label{5.7}
	tr(A^k)=\sum_{j=1}^N \lambda_j^k \leq N \rho^k,
	\end{equation}
	for all $k\geq 1$.
\end{lemma}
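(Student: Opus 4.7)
The statement is a very standard spectral bound, so my plan is short. The first equality $\operatorname{tr}(A^k)=\sum_{j=1}^N \lambda_j^k$ is a basic consequence of the Schur (or Jordan) decomposition of $A$: writing $A = U T U^{-1}$ with $T$ upper triangular and diagonal entries equal to the eigenvalues $\lambda_1,\dots,\lambda_N$ of $A$ (counted with algebraic multiplicity), one has $A^k = U T^k U^{-1}$, and $T^k$ is again upper triangular with diagonal $\lambda_1^k,\dots,\lambda_N^k$. Since trace is invariant under conjugation and equal to the sum of diagonal entries, we get $\operatorname{tr}(A^k)=\sum_{j=1}^N \lambda_j^k$.

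For the inequality, I would use two observations. First, by the definition of the maximum eigenvalue $\rho = \rho(A) = \max_j |\lambda_j|$, we have $|\lambda_j|\le \rho$ for every $j$, and hence $|\lambda_j^k|=|\lambda_j|^k \le \rho^k$. Second, since $A$ is non-negative, the matrix $A^k$ has non-negative entries, so in particular $\operatorname{tr}(A^k)\ge 0$ and $\operatorname{tr}(A^k)=|\operatorname{tr}(A^k)|$ is real. Combining these,
\[
\operatorname{tr}(A^k)=\Bigl|\sum_{j=1}^N \lambda_j^k\Bigr|\le \sum_{j=1}^N |\lambda_j|^k \le N\rho^k,
\]
which gives the claimed bound.

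There is no genuine obstacle here: both steps (trace equals sum of eigenvalue powers, and eigenvalues are bounded in modulus by the spectral radius) are textbook facts from matrix theory, already cited earlier in the paper (e.g., \cite{Gantmacher1960 Matrix Theory,Horn1990 Matrix analysis}). The only small care needed is to note that the complex eigenvalues occur in conjugate pairs so the sum $\sum_j \lambda_j^k$ is in fact real, which is automatic once we know it equals the real number $\operatorname{tr}(A^k)$. The non-negativity of $\operatorname{tr}(A^k)$ is used only to drop the absolute-value signs, which is clean and requires no further work.
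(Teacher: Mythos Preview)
Your proof is correct and follows essentially the same route as the paper's: both use $|\lambda_j|\le\rho$ to bound $\sum_j|\lambda_j|^k$ by $N\rho^k$, with the trace identity $\operatorname{tr}(A^k)=\sum_j\lambda_j^k$ taken as a standard fact. Your version is simply more explicit about why the real, non-negative quantity $\operatorname{tr}(A^k)$ can be compared with $\sum_j|\lambda_j|^k$, which the paper leaves implicit.
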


\begin{proof}
	Let $\{ \lambda_j \}_{j=1}^N$ be the eigenvalues of $A$. Then 
	\begin{equation*}
	\left| \lambda_j  \right| \leq \rho,
	\end{equation*}
	for all $j$.
	
	Hence 
	\begin{equation*}
	tr(A^k)=\sum_{j=1}^{N} \lambda_j^k \leq \sum_{j=1}^N \left| \lambda_j \right|^k \leq N\rho^k.
	\end{equation*}
Then, (\ref{5.7}) follows.
\end{proof}
Then we have the following result.

\begin{theorem}\label{thm 5.3}
	For any basic set $\mathcal{B}$ of admissible patterns, 
	\begin{equation}\label{5.9}
	h_1(\mathcal{B})=\limsup_{q \rightarrow \infty} \log\rho({\bf T}_{\gamma_q,1}).
	\end{equation}
\end{theorem}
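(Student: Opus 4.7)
The plan is to bracket $h_1(\mathcal{B})$ between $\limsup_{q\to\infty}\log\rho({\bf T}_{\gamma_q,1})$ from above and below, starting from the identity (\ref{5.4-1}).

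For the upper direction I would apply the trace bound (\ref{5.7}) to $A={\bf T}_{\gamma_q,1}$. Inspection of the block forms displayed just before the theorem shows that this matrix has size $N_q=r^{q+1}$, so
\begin{equation*}
\frac{1}{mq}\log tr({\bf T}_{\gamma_q,1}^{mq})\leq \frac{(q+1)\log r}{mq}+\log\rho({\bf T}_{\gamma_q,1}).
\end{equation*}
Because $(q+1)/(mq)=1/m+1/(mq)$ vanishes whenever $\min(m,q)\to\infty$, and $\log\rho({\bf T}_{\gamma_q,1})$ depends only on $q$, taking $\limsup_{(m,q)\to\infty}$ on both sides yields $h_1(\mathcal{B})\leq \limsup_{q\to\infty}\log\rho({\bf T}_{\gamma_q,1})$.

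For the lower direction I would invoke Lemma \ref{lemma 4.4} to convert the double $\limsup$ in (\ref{5.4-1}) into the iterated one, obtaining
\begin{equation*}
h_1(\mathcal{B})\geq \limsup_{q\to\infty}\Bigl(\limsup_{m\to\infty}\tfrac{1}{mq}\log tr({\bf T}_{\gamma_q,1}^{mq})\Bigr),
\end{equation*}
and then argue that the inner $\limsup$ equals $\log\rho({\bf T}_{\gamma_q,1})$ for each fixed $q$. To do so I would replace ${\bf T}_{\gamma_q,1}$ by its maximum irreducible submatrix $\overline{\bf T}_{\gamma_q,1}$, whose spectral radius still equals $\rho({\bf T}_{\gamma_q,1})$ by Theorem \ref{theorem 4.2}(ii), and whose cycle I denote $p_q$. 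Theorem \ref{thm 4.3} then produces an index $i$ in this irreducible block together with a positive constant $c_q$ such that $(\overline{\bf T}_{\gamma_q,1}^{\alpha p_q})_{i,i}\geq c_q\,\rho({\bf T}_{\gamma_q,1})^{\alpha p_q}$ for all large $\alpha$. Restricting $m$ to the subsequence $m=np_q$ forces $p_q\mid mq$, and the chain $tr({\bf T}_{\gamma_q,1}^{mq})\geq (\overline{\bf T}_{\gamma_q,1}^{mq})_{i,i}\geq c_q\,\rho({\bf T}_{\gamma_q,1})^{mq}$ yields the inner bound; a final $\limsup$ over $q$ closes the argument.

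The principal technical point is the lower bound along the restricted exponent set $\{mq\}_{m\geq 1}$: if the period $p_q$ of the maximum irreducible component of ${\bf T}_{\gamma_q,1}$ does not divide $q$, then $tr({\bf T}_{\gamma_q,1}^{mq})$ can vanish for generic $m$, which forces the sub-sub-sequence $m=np_q$ in order to access the Perron asymptotics of Theorem \ref{thm 4.3}. The upper bound is in contrast routine, since the dimension $r^{q+1}$ grows only geometrically in $q$ while the denominator $mq$ grows without restriction.
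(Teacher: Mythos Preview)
Your proposal is correct and follows essentially the same two-sided bounding strategy as the paper: the upper bound via the trace inequality (\ref{5.7}) with $N_q=r^{q+1}$, and the lower bound via Lemma \ref{lemma 4.4} followed by identification of the inner $\limsup$ with $\log\rho({\bf T}_{\gamma_q,1})$. The only difference is in that last identification: the paper writes the trace as $\rho_q^{mq}\bigl(1+\sum_{j\geq 2}(\lambda_j/\rho_q)^{mq}\bigr)$ and asserts directly that the bracketed factor contributes nothing to the $\limsup$, whereas you bound the trace below by a single diagonal entry of the maximal irreducible block and invoke Theorem \ref{thm 4.3} along the subsequence $m=np_q$. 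Your route is slightly more explicit about the periodicity and reducibility issues that the paper's eigenvalue expansion glosses over, but the underlying idea is the same.
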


\begin{proof}
	We first prove 
	\begin{equation}\label{5.10}
	h_1(\mathcal{B})\geq \limsup_{q \rightarrow \infty} \log\rho({\bf T}_{\gamma_q,1}).
	\end{equation}
	From (\ref{5.7}), we have
	\begin{equation*}
	\begin{aligned}
	\limsup_{m\rightarrow\infty} \frac{1}{mq}\log tr ({\bf T}_{\gamma_q,1}^{mq})&=	\limsup_{m\rightarrow\infty}  \frac{1}{mq}\log \rho_q^{mq} \left( 1+ \sum_{j=2}^{N_q} \left(\frac{\lambda_j}{\rho_q} \right)^{mq} \right)  \\
	&=\log\rho({\bf T}_{\gamma_q,1}) + 	\limsup_{m\rightarrow\infty}  \frac{1}{mq}\log  \left( 1+ \sum_{j=2}^{N_q} \left(\frac{\lambda_j}{\rho_q} \right)^{mq} \right)\\
	&=  \log\rho({\bf T}_{\gamma_q,1}),
	\end{aligned}
	\end{equation*}
where $\rho_q=\rho({\bf T}_{\gamma_q,1})$ and $N_q=r^{q+1}$, here $r$ is the number of symbols.

Then (\ref{5.10}) follows from
\begin{equation*}
h_1(\mathcal{B})\geq \limsup_{q\rightarrow\infty} \left(\limsup_{m\rightarrow\infty} \frac{1}{mq}\log tr ({\bf T}_{\gamma_q,1}^{mq})  \right) =\limsup_{q\rightarrow\infty}\log\rho({\bf T}_{\gamma_q,1}).
\end{equation*}
On the other hand, (\ref{5.7}) also implies 
\begin{equation*}
tr({\bf T}_{\gamma_q,1}^{mq})\leq N_q \rho({\bf T}_{\gamma_q,1})^{mq}.
\end{equation*}
Hence (\ref{5.9}) follows from
\begin{equation*}
h_1(\mathcal{B})\leq \limsup_{(m,q)\rightarrow\infty} \frac{1}{mq}\log N_q \rho({\bf T}_{\gamma_q,1})^{mq}= \limsup_{q\rightarrow\infty} \log \rho({\bf T}_{\gamma_q,1}).
\end{equation*}
\end{proof}
Combining Theorems \ref{thm 4.9}, \ref{thm 5.5} and \ref{thm 5.3}, we have
\begin{theorem}\label{thm 4.18}
If $\{{\bf T}_m\}_{m=1}^\infty$ is irreducible and uniformly dominated by $\{\rho({\bf T}_m)\}_{m=1}^\infty$, or if $\{{\bf T}_m\}_{m=1}^\infty$ is reducible and the maximum irreducible submatrices $\{\overline{\bf T}_m\}_{m=1}^\infty$ with indices pairs $i$ and $\sigma^{-1}(i)\in \overline{\mathcal{I}}_m$ and $\{\overline{\bf T}_m\}_{m=1}^\infty$ is uniformly dominated by $\{\rho({\bf T}_m)\}_{m=1}^\infty$. Then
	\begin{equation*}
h(\mathcal{B})=\limsup_{q \rightarrow \infty} \log\rho({\bf T}_{\gamma_q,1}).
\end{equation*}
\end{theorem}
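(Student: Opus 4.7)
The plan is to deduce this result by combining two facts already established in the excerpt: the identification of $h_1(\mathcal{B})$ in terms of $\rho({\bf T}_{\gamma_q,1})$ (Theorem \ref{thm 5.3}), and the equality $h_\ell(\mathcal{B})=h(\mathcal{B})$ for a suitable $\ell$ under uniform dominance (Theorem \ref{thm 5.5}). The bridge is the Hermite‐form identity
\begin{equation*}
\left[\begin{matrix}
1 & 0 \\
0 & mq
\end{matrix}\right]_{\gamma_q}
\;\cong\;
\left[\begin{matrix}
m & 1 \\
0 & q
\end{matrix}\right]_{\gamma_0},
\end{equation*}
which comes from (\ref{5.3}) in Theorem \ref{thm 3.8}; this is what justifies reading off $1$-shift periodic counts from $\gamma_q$-coordinates via traces of ${\bf T}_{\gamma_q,1}$.

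First I would invoke Theorem \ref{thm 5.3} to obtain
\begin{equation*}
h_1(\mathcal{B})=\limsup_{q\to\infty}\log\rho({\bf T}_{\gamma_q,1}).
\end{equation*}
Next I would specialize Theorem \ref{thm 5.5} to $\ell=1$. In the irreducible case the hypotheses of Theorem \ref{thm 5.5} with $\ell=1$ are identical to those of the present statement, so
\begin{equation*}
h_1(\mathcal{B})=h_p(\mathcal{B})=h(\mathcal{B})=h_\ast(\mathcal{B})
\end{equation*}
follows directly. In the reducible case, the present hypothesis supplies precisely what Theorem \ref{thm 5.5} requires: an indices pair $(i,\sigma^{-1}(i))$ lying in $\overline{\mathcal{I}}_m$ for each $m$, together with uniform dominance of $\{\overline{\bf T}_m\}_{m=1}^\infty$ by $\{\rho({\bf T}_m)\}_{m=1}^\infty$. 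Hence Theorem \ref{thm 5.5} again yields $h_1(\mathcal{B})=h(\mathcal{B})$.

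Combining these two equalities gives the desired identity
\begin{equation*}
h(\mathcal{B})=\limsup_{q\to\infty}\log\rho({\bf T}_{\gamma_q,1}).
\end{equation*}
There is essentially no obstacle beyond checking that the hypotheses of Theorem \ref{thm 5.5} (specifically its reducible‐case branch with the index shift $\sigma^{-1}$) match the hypotheses stated here; this alignment is built into the statement. Since both pieces of input—Theorem \ref{thm 5.3} and Theorem \ref{thm 5.5}—are already proved earlier in the paper, the argument reduces to citing them in sequence, and no additional estimate on $\rho({\bf T}_{\gamma_q,1})$ or on the transition matrices is needed.
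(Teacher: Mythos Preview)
Your proposal is correct and matches the paper's own argument: the paper states that Theorem~\ref{thm 4.18} follows by combining Theorems~\ref{thm 4.9}, \ref{thm 5.5} and \ref{thm 5.3}, which is exactly what you do (your citation of Theorem~\ref{thm 5.5} with $\ell=1$ already subsumes the content of Theorem~\ref{thm 4.9}, so the extra reference in the paper is redundant). The Hermite-form identity from Theorem~\ref{thm 3.8} that you mention is indeed the mechanism behind Theorem~\ref{thm 5.3}, so your outline is complete.
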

From Theorem \ref{thm 4.18}, $h(\mathcal{B})$ can be studied by $\rho({\bf T}_{\gamma_q,1})$ which are easier than computing $\rho\left({\bf T}_m\right)$. 

\bibliographystyle{amsplain}

\end{document}